\documentclass[12pt]{article}

\usepackage{graphicx}
\usepackage{latexsym,amssymb}
\usepackage{amsthm}
\usepackage{indentfirst}
\usepackage{amsmath}
\usepackage{color}
\usepackage{xcolor}
\usepackage{fourier}
\usepackage[colorlinks=true,backref=page]{hyperref}
\usepackage[all]{xy}

\textwidth=16. true cm
\textheight=24. true cm
\voffset=-2. true cm
\hoffset = -1.5 true cm

\newtheorem{theoremalph}{Theorem}

\newtheorem*{Main Theorem}{Main Theorem}

\newtheorem{Theorem}{Theorem}[section]
\newtheorem*{Theorem A}{Theorem A}
\newtheorem*{Theorem A'}{Theorem A'}
\newtheorem*{Theorem B'}{Theorem B'}

\newtheorem{Proposition}[Theorem]{Proposition}
\newtheorem{Lemma}[Theorem]{Lemma}

\newtheorem{Remark-numbered}[Theorem]{Remark}
\newtheorem{Corollary}[Theorem]{Corollary}

\newtheorem*{Claim}{Claim}
\newtheorem{Claim-numbered}[Theorem]{Claim}

 \def\NN{{\mathbb N}} 

\def\QQ{{\mathbb Q}}

 \def\ZZ{{\mathbb Z}}

\def\cE{{\cal E}}

\newcommand{\loc}{\operatorname{loc}}

\def\dim{\operatorname{dim}}

\def\diam{\operatorname{Diam}}

\setcounter{tocdepth}{1}
\setcounter{secnumdepth}{3}

\begin{document}

\title{Ergodic measures with large entropy have long unstable manifolds for $C^\infty$ surface diffeomorphisms}

\author{Chiyi Luo and Dawei Yang\footnote{
D. Yang  was partially supported by National Key R\&D Program of China (2022YFA1005801), NSFC 12171348 and NSFC 12325106.
}}

\date{}
\maketitle

\begin{abstract} 
We prove that for ergodic measures with large entropy have long unstable manifolds for $C^\infty$ surface diffeomorphisms.
Specifically, for any $\alpha>0$, there exist constants $\beta>0$ and $c>0$ such that for every ergodic measure $\mu$ with metric entropy large than $\alpha$, the set of points with the size of unstable manifolds large than $\beta$ has $\mu$-measure large than $c$. 
%We also obtain similar results for $C^r$ surface diffeomorphisms with $r>1$.
\medskip

\end{abstract}
\tableofcontents

\section{Introduction}\label{SEC:1}

Pesin theory is one of the cornerstones of smooth ergodic theory, originating with Pesin’s work in the 1970s \cite{Pes76,Pes77}.
A key component of Pesin theory is the result that for a smooth diffeomorphism $f$ of a compact Riemannian manifold $M$ and an ergodic hyperbolic measure $\mu$,  $\mu$-almost every point has a stable manifold and an unstable manifold. 
However, the size of stable manifold or unstable manifold depends on the measure.

Recall the Oseledec theorem \cite{Ose68}: for an ergodic measure $\mu$ of a diffeomorphism $f$, there are finitely many numbers 
$$\lambda_1(\mu,f)>\lambda_2(\mu,f)>\cdots>\lambda_s(\mu,f)$$
and a $Df$-invariant splitting on a full $\mu$-measure set
$$E^1\bigoplus E^2\bigoplus\cdots\bigoplus E^t$$
 such that $\sum_{i=1}^t \dim E^i=d$, and for any non-zero vector $v\in E^i(x)$, 
$$\lim_{n\to\infty}\frac{1}{n}\log\|D_xf^n v\|=\lambda_i(\mu,f),~\mu\text{-a.e.}$$
When there is no confusion, $\lambda_i(\mu,f)$ is simply denoted by $\lambda_i(\mu)$.

Pesin \cite{Pes76,Pes77} has proved that for a $C^{1+\alpha}$ diffeomorphism $f$ and an ergodic measure $\mu$, if $\lambda_1(\mu)>0$, then $\mu$-almost every point has an unstable manifold. 
Take $i_0$ be the maximal integer such that $\lambda_{i_0}(\mu)>0$.
Then for $\mu$-almost every $x$, there exists a Pesin unstable manifold
$$W^u_{\rm Pes}(x):=\left \{y\in M:~\limsup_{n\to\infty}\frac{1}{n}\log d(f^n(x),f^n(y))<0 \right \}$$
which is tangent to $E^1(x)\bigoplus E^2(x)\bigoplus \cdots \bigoplus E^{i_0}(x)$ at $x$. A similar conclusion holds for the stable manifold.

Although the existence of stable and unstable manifolds of a hyperbolic measure is know, the size of these manifolds depends on the measure. 
For $C^\infty$ surface diffeomorphisms, we will prove that the size can be uniformly determined by the metric entropy of $\mu$.

\medskip

We define what it means for the size of unstable manifolds to be “large.” 
Let
$$E^u(x)=E^1(x)\bigoplus E^2(x)\bigoplus \cdots \bigoplus E^{i_0}(x),$$ 
where  $i_0$  is the largest integer such that  $\lambda_{i_0}(\mu) > 0$.
Given $\beta>0$, define
\begin{align*}
&~L^u(\beta)=\{x\in M:~\exists W_x\subset W^u_{\rm Pes}(x),~\textrm{s.t.}~\exp_x^{-1}W_x~\textrm{is a $C^1$ graph of a map~}\varphi:~E^u(x)\to (E^u(x))^\perp,\\
&~~~~{\rm Lip}(\varphi)\le 1/3,~{\rm Domain}(\varphi)\supset E^u(x)(\beta)\}.
\end{align*}
We say that a point $x$ has a $\beta$-large unstable manifold, if $x\in L^u(\beta)$.
For stable manifolds, we let $j_0$ be the smallest integer such that $\lambda_{j_0}(\mu)<0$, and denote
$$E^s(x)=E^{j_0}(x)\bigoplus E^{j_0+1}(x)\bigoplus \cdots \bigoplus E^{t}(x),$$ 
we define
\begin{align*}
&~L^s(\beta)=\{x\in M:~\exists W_x\subset W^s_{\rm Pes}(x),~\textrm{s.t.}~\exp_x^{-1}W_x~\textrm{is a $C^1$  graph of a map~}\varphi:~E^s(x)\to (E^s(x))^\perp,\\
&~~~~{\rm Lip}(\varphi)\le 1/3,~{\rm Domain}(\varphi)\supset E^u(x)(\beta)\}.
\end{align*}
Similarly, we say that a point $x$ has a $\beta$-large stable manifold, if $x\in L^s(\beta)$.

Our main theorem in dimension $2$ (when $M$ is a closed surface: a two-dimensional compact $C^{\infty}$ Riemannian manifold without boundary), states that if the entropy is sufficiently large, both the stable and unstable manifolds are long.

\begin{theoremalph}\label{Thm:C-infty-long}
Suppose $M$ is a closed surface and $f:~M\to M$ is a $C^\infty$ surface diffeomorphism. For any $\alpha>0$, there exist constants $\beta>0$ and $c>0$ such that for any ergodic measure $\mu$, if 
$$h_{\mu}(f)>\alpha,$$ then $\mu(L^u(\beta))>c$ and $\mu(L^s(\beta))>c$.
\end{theoremalph}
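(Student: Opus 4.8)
The proof splits into three layers: (1) converting the entropy hypothesis into uniform hyperbolicity; (2) reformulating the goal as a contrapositive statement about $L^u(\beta)$; and (3) the core estimate, which is the only place where the $C^\infty$ (rather than $C^{1+\alpha}$) regularity is used, through Yomdin--Gromov reparametrization. \textbf{Step 1 (uniform hyperbolicity).} Apply the Ruelle--Margulis inequality to $f$ and, separately, to $f^{-1}$. Since $M$ is a surface there are at most two distinct Lyapunov exponents $\lambda_1(\mu)\ge\lambda_2(\mu)$, and $h_\mu(f)\le\sum_i\max\{\lambda_i(\mu),0\}\dim E^i$. As $h_\mu(f)=h_\mu(f^{-1})>\alpha>0$, this forces $\lambda_1(\mu)>\alpha$ and $\lambda_2(\mu)<-\alpha$, while $|\lambda_i(\mu)|\le L:=\max\{\log\|Df\|_{C^0},\log\|Df^{-1}\|_{C^0}\}$. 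So every $\mu$ admissible in the theorem is hyperbolic with exponents in the $\mu$-independent compact set $[-L,-\alpha]\cup[\alpha,L]$. Moreover $L^s(\beta)$ for $f$ equals $L^u(\beta)$ for $f^{-1}$ and $h_\mu(f^{-1})=h_\mu(f)$, so it suffices to establish the conclusion about $L^u(\beta)$.

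\textbf{Step 2 (reduction to a contrapositive).} It is enough to find $\beta=\beta(\alpha)>0$ and $c=c(\alpha)>0$ such that every ergodic $\mu$ that is hyperbolic with exponents as in Step 1 and satisfies $\mu(L^u(\beta))\le c$ has $h_\mu(f)\le\alpha$; in the course of the argument I will want $c\cdot C_f<\alpha/2$ for a constant $C_f$ depending only on $f$, and then $\beta$ as small as needed. I note in passing that the obvious soft approach---take a weak-$*$ limit $\nu$ of hypothetical counterexamples $\mu_n$ with $h_{\mu_n}(f)>\alpha$ and $\mu_n(L^u(1/n))\to0$, and use upper semicontinuity of $\mu\mapsto h_\mu(f)$ (valid for $C^\infty$, by Newhouse) to get $h_\nu(f)\ge\alpha$ and hence, by Step 1 applied to the ergodic components of $\nu$, a hyperbolic component with genuine local unstable manifolds---does not close by itself, since the sets $L^u(\beta)$ are neither open nor closed and weak-$*$ convergence does not transport the size of Pesin unstable manifolds. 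One really needs the quantitative estimate of Step 3.

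\textbf{Step 3 (the Yomdin estimate).} Fix the rate $\chi=\alpha/2$; the level-$\ell$ Pesin block $\Lambda_\ell$ (points whose whole orbit obeys the Pesin estimates with constant $e^\ell$) is defined by $f$ alone, and by the graph transform $\Lambda_\ell\subset L^u(\beta_\ell)$ for some $\beta_\ell>0$ depending only on $f,\alpha,\ell$, with the usual local product structure of local stable and unstable manifolds on $\Lambda_\ell$. Given $\mu$ with $\mu(L^u(\beta))\le c$, Birkhoff's theorem shows that a $\mu$-typical point has its local unstable manifold shorter than $\beta$ for at least a $(1-c)$-fraction of times. By Katok's entropy formula, $h_\mu(f)$ equals the exponential growth rate in $n$ of the maximal $(n,\varepsilon)$-separated subsets inside the Pesin sets, and such points are separated along their unstable manifolds. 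Here is the $C^\infty$ input: by Yomdin--Gromov reparametrization in its sharp smooth form, the number of pieces into which the orbits $f^j\circ\sigma$, $0\le j\le n$, of a short $C^\infty$ curve $\sigma$ must be cut so as to control all their $r$-jets grows, as $r\to\infty$, at a rate governed only by the actual folding the curve undergoes---the $C^r$ ``defect'' of size $O(1/r)$ disappears. Feeding this into the separation count bounds $h_\mu(f)$ by the asymptotic growth rate of a reparametrization-complexity cocycle along $\mu$-typical orbits, a cocycle that increases only at times when the relevant iterated unstable curve has grown to a fixed macroscopic scale; by Birkhoff again the frequency of such times is at most $C_f\cdot\mu(\{\text{size of local unstable manifold}\ \ge\ \text{macroscopic scale}\})\le C_f\cdot c$. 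Hence $h_\mu(f)\le C_f\cdot c+(\text{error}\to0\text{ as }\beta\to0)$, and the choices of Step 2 give $h_\mu(f)<\alpha$, the required contradiction. (Alternatively, the Ledrappier--Young formula $h_\mu(f)=\lambda_1(\mu)\cdot\delta_\mu$ on surfaces yields $\delta_\mu\ge\alpha/L$ for the dimension of the unstable conditionals, against which one can run the same reparametrization count.)

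\textbf{Expected main obstacle.} Everything hinges on making Step 3 uniform in $\mu$: the reparametrization counts, the Pesin distortion bounds, and the tempered (sub-exponential) fluctuation of the Pesin constant along orbits must all be controlled by constants depending only on $f$ and $\alpha$; and along a $\mu$-typical orbit one must rigorously separate the ``short'' steps, where the iterated unstable curve is below the macroscopic scale and contributes no new complexity, from the ``macroscopic'' ones, while tracking the re-entry and folding of the curve and the interplay of the length-growth and Jacobian terms in Yomdin's volume-growth formula. This bookkeeping, rather than any single conceptual step, is where I expect the real difficulty to lie.
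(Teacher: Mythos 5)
Your overall strategy coincides with the paper's: reduce to the unstable case via $f^{-1}$ and the Ruelle inequality, argue by contradiction from $\mu(L^u(\beta))\le c$, and bound the entropy by a Yomdin--Burguet reparametrization count of a local unstable curve in which the expensive subdivisions occur only with a frequency controlled by $c$. However, two essential steps are missing, and the second is the heart of the matter.

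First, the passage from $h_\mu(f)$ to a counting problem along a single unstable leaf is asserted via ``Katok's entropy formula \dots such points are separated along their unstable manifolds,'' which is not automatic: Katok's formula produces $(n,\varepsilon)$-separated sets in $M$, not inside one leaf. The paper instead works with the Ledrappier--Young partial entropy $h^E_\mu(f)$ and proves (Propositions~\ref{Prop:Two Balls} and~\ref{Pro:unstable-entropy-partition}) that it is bounded by the exponential growth rate of the number of elements of $\mathcal{P}^n$ meeting $K\cap\Sigma$ for a fixed piece $\Sigma$ of one local unstable manifold; this requires a genuine combinatorial argument comparing Bowen balls of $M$ with Bowen balls inside the leaf, conditional measures, and a control on the frequency of excursions outside a good set.

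Second, and more seriously, the accounting in your Step 3 does not close as stated. Under the contradiction hypothesis the iterated curve $f^{jq}\circ\sigma$ becomes macroscopic at almost every time $j$ (the unstable exponent is positive), yet to continue the induction every piece must be kept $\varepsilon$-strongly bounded, i.e.\ short; cutting a macroscopic curve into short pieces costs a factor of order $\|Df^q\|_{\sup}$ per step, which alone would swamp $\alpha$. The frequency that is bounded by $O(c)$ is not the frequency at which the curve is long, but the frequency at which the tracked point lies in $L^{u,1}(\beta)$. The paper's key device (the Claim inside Proposition~\ref{Pro:one-type-covering}) is that at the remaining times --- curve long but point \emph{not} in $L^{u,1}(\beta)$ --- the point must lie within distance $\beta_{\varepsilon_g}$ of one of the two endpoints of the long image curve, since otherwise it would have a $\beta$-large unstable manifold; hence only two subpieces near the endpoints need to be retained, at a cost of $\log 2$ per step, absorbed by taking the block length $q$ large. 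Without this endpoint argument the ``bookkeeping'' you defer is not merely tedious but impossible: the naive count yields $h_\mu(f)\lesssim\frac1q\log\|Df^q\|_{\sup}$ rather than $\le\alpha$. (Your parenthetical $C^\infty$ claim that the $O(1/r)$ defect ``disappears'' is the correct reason Theorem~\ref{Thm:C-infty-long} follows from the $C^r$ statement, and matches the paper's use of Burguet's lemma.)
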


{Note that D. Burguet has observed more accurate dependence of $\alpha$ and $c$ in the main theorems.}

{In personal communications with S. Crovisier, Theorem~\ref{Thm:C-infty-long} answers one of Crovisier's questions with an additional assumption: the entropy is uniformly bounded away from $0$. Crovisier has asked that for $C^\infty$ surface diffeomorphisms, if the Lyapunov exponents of an ergodic measure $\mu$ are uniformly bounded away from $0$, then some $\mu$-typical points have long unstable manifolds. See \cite[Section 3, Question 11]{Hen24} for a related question concerning H\'enon maps. Note that by the Ruelle inequality, for surface diffeomorphisms if the entropy is bounded away from $0$, then the Lyapunov exponents are also bounded away from zero.}

The above Theorem~\ref{Thm:C-infty-long} has a version for  $C^r$ diffeomorphisms. Define
$$R(f)=\lim_{n\to\infty}\frac{1}{n}\log\|Df^n\|_{\rm sup}.$$

\begin{theoremalph}\label{Thm:Cr-long-unstable}
Suppose that $M$ is a closed surface and $1<r\in\NN$. 
Let $f:~M\to M$ be a $C^r$ surface diffeomorphism. 
For any $\alpha>0$, there exist constants $\beta>0$ and $c>0$ such that for any ergodic measure $\mu$, if $$h_{\mu}(f)>\alpha+\frac{\max\{R(f),R(f^{-1})\}}{r},$$ then $\mu(L^u(\beta))>c$ and $\mu(L^s(\beta))>c$.
\end{theoremalph}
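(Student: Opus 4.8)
\medskip
\noindent\textbf{Proof strategy.}

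The plan is to prove Theorem~\ref{Thm:Cr-long-unstable} directly; Theorem~\ref{Thm:C-infty-long} then follows by applying it to $f\in C^\infty$ with any integer $r$ large enough that $\max\{R(f),R(f^{-1})\}/r<\alpha/2$, and with $\alpha/2$ playing the role of $\alpha$. In Theorem~\ref{Thm:Cr-long-unstable} I would first reduce to the unstable statement: replacing $f$ by $f^{-1}$ interchanges $W^u_{\rm Pes}$ with $W^s_{\rm Pes}$, interchanges $R(f)$ with $R(f^{-1})$, and leaves $h_\mu$ unchanged, so it suffices to produce $\beta,c>0$ with $\mu(L^u(\beta))>c$ whenever $h_\mu(f)>\alpha+R(f)/r$. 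Next, by the Ruelle inequality applied to both $f$ and $f^{-1}$, the bound $h_\mu(f)>\alpha$ already forces $\lambda_1(\mu)>\alpha$ and $\lambda_2(\mu)<-\alpha$, so $\mu$ is hyperbolic with Lyapunov exponents quantitatively bounded away from $0$; together with the one-step bounds $\|Df^{\pm1}\|\le e^{R(f^{\pm1})}$, a Pliss-type lemma then produces, uniformly in $\mu$, a positive density of times along $\mu$-generic orbits at which $E^u$ and $E^s$ are ``$\alpha/2$-hyperbolic''.

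The second step is to localise the condition $x\in L^u(\beta)$. I would work with the unstable hyperbolicity radius $\rho(x)$, the largest $\rho$ such that the radius-$\rho$ piece of $W^u_{\rm Pes}(x)$ is, in the chart $\exp_x$, a $1/3$-Lipschitz graph over $E^u(x)$; thus $\{\rho\ge\beta\}\subset L^u(c_1\beta)$ for a universal $c_1$, and $\rho$ is upper semicontinuous and $\mu$-tempered. The standard Pesin estimates in Lyapunov charts bound $\rho(x)$ below by a power of $\Delta(x)^{-1}$, where $\Delta(x)$ is a ``distortion'' that accumulates, along the backward orbit of $x$, the hyperbolicity defect of $Df$ on $E^u$ and on $E^s$, the defect of the angle $\angle(E^u,E^s)$, and the nonlinearity controlled by $\|f\|_{C^r}$. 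Since Step~1 already handles the hyperbolicity-defect part uniformly, the problem reduces to a uniform lower bound $\mu(\{\Delta\le K\})\ge c$ with $K,c$ depending only on $\alpha$, $f$ and $r$; equivalently, one must rule out that the ``folding region'' — where unstable-type curves fail, through small angles and accumulated nonlinearity, to organise into a bounded graph at scale $\beta$ — has $\mu$-measure close to $1$.

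The heart of the proof, and the one place where the entropy hypothesis does something that the exponent bound alone cannot (this is precisely why Crovisier's question is subtle), is to show that the folding region at a small scale $\beta$ cannot carry entropy much above $R(f)/r$. The plan is to estimate the unstable-leaf entropy — which by Ledrappier--Young equals $h_\mu(f)$ — of the part of $\mu$ lying over the folding region, by a $C^r$ Yomdin--Gromov semialgebraic reparametrization argument: the iterates of a short unstable-type curve through a generic point can be covered by families of reparametrizations of bounded $C^r$-norm, and, because such a curve is one-dimensional, the number of pieces needed to cover the part of $f^n(D)$ that lies over the folding region grows, up to a subexponential factor, only like $e^{n(R(f)/r+\varepsilon(\beta))}$, with $\varepsilon(\beta)\to0$ as $\beta\to0$. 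Together with an Abramov-type passage to the first-return map on the folding set, this should yield an inequality of the form $h_\mu(f)\le R(f)/r+\varepsilon(\beta)+\eta(\mu(L^u(\beta)))$ with $\eta(a)\to0$ as $a\to0$. Choosing $\beta$ small and then $c$ small in terms of $\alpha$, $f$ and $r$, the hypothesis $h_\mu(f)>\alpha+R(f)/r$ forces $\mu(L^u(\beta))>c$; the estimate for $L^s(\beta)$ follows by the $f\leftrightarrow f^{-1}$ symmetry.

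The main obstacle is the reparametrization estimate of the previous paragraph and its interface with the unstable-leaf entropy formula and the Abramov reduction: one must convert ``short unstable manifolds on a large $\mu$-set'' into a genuine quantitative deficit in curve growth and folding, bound the entropy it can support by exactly $R(f)/r$ — the coefficient $1/r$ rather than $2/r$ being what forces a careful use of the $1$-dimensionality of the unstable direction — and do all of this uniformly in $\mu$, with error terms whose dependence on $\beta$ is explicit enough to be absorbed once $\beta$ has been chosen after $\alpha$. A soft weak-$*$ compactness argument does not appear to be available, since $\rho$ is not jointly continuous in the point and the measure, so the reparametrization bookkeeping has to be done effectively and matched to the Pesin-block geometry; this is where I expect essentially all of the difficulty to lie, the reductions in Steps~1 and~2 being comparatively routine.
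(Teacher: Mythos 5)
Your overall contradiction structure and your identification of the two crucial ingredients — the $f\leftrightarrow f^{-1}$ symmetry reduction and Yomdin/Burguet reparametrization of one-dimensional unstable curves — both match the paper, which indeed proves the result by assuming $\mu(L^u(\beta))\le c$ and deriving an entropy bound $h_\mu(f)\le R(f)/r+\alpha$ that contradicts the hypothesis. But the heart of your proposal, which you honestly flag as the place where ``essentially all of the difficulty'' lies, is left as a plan rather than a proof, and the paper fills that gap with a specific mechanism your outline does not supply. The paper's key observation (Proposition~\ref{Pro:one-type-covering}) is this: work with $g=f^q$, and whenever $g^j(x)\notin L^{u,1}(\beta)$, the reparametrized image of the unstable curve at step $j$ \emph{must be short} — if it were long it would exhibit a $\beta$-large unstable graph through $g^j(x)$, contradicting $g^j(x)\notin L^{u,1}(\beta)$ — so only two extra affine cuts suffice to restore the $\varepsilon_g$-strongly-bounded property (via Lemma~\ref{Lem:size-lower-bounde}), as opposed to the $O(\|Dg\|_{\sup})$ cuts of Lemma~\ref{Lem:more-cutting} that may be needed at steps where $g^j(x)\in L^{u,1}(\beta)$. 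Combined with the Birkhoff theorem (which caps the density of visits to $L^{u,1}(\beta)$ by $\mu(L^{u,1}(\beta))\le c$) and the Ruelle-inequality estimate $\sum(k_j-k_j')/(r-1)\le n(R(f)/r+\alpha/10)$ of Lemma~\ref{Lem:sum-of-k}, this yields the cover count of Theorem~\ref{Thm:cover}. This is precisely the step that ``converts short unstable manifolds on a large $\mu$-set into a quantitative deficit in curve growth,'' and you do not exhibit it.

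Several of your subsidiary ingredients also do not match the paper and would not obviously carry the argument. The Abramov-type passage to a first-return map on the folding set is not used — the paper works directly with the density of visits along the full orbit, which is both simpler and what makes the reparametrization bookkeeping go through; it is unclear how an Abramov reduction would interact with the Bowen-ball/partition counting of Propositions~\ref{Prop:Two Balls} and~\ref{Pro:unstable-entropy-partition}. The Pliss lemma and the Pesin-distortion function $\Delta(x)$ (with $\rho(x)\gtrsim\Delta(x)^{-\text{power}}$) likewise play no role: the link between ``not in $L^u(\beta)$'' and ``the curve is short'' is made directly from the definition of $L^{u,1}(\beta)$, not through tempered distortion estimates. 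Finally, the claimed error term $\varepsilon(\beta)\to 0$ as $\beta\to 0$ is not the right dependence — in the paper the $\alpha/10$-type error terms come from the choice of $q$, $\alpha_1$, and $C_r$, and $\beta$ is chosen \emph{after} those and only needs to be small enough (relative to $\beta_{\varepsilon_g}$); the genuine small parameter is the density bound $c$, which enters through $\alpha_1$. One further difference worth noting: the paper actually proves the more general Theorem~\ref{Thm:Cr-long-unstable-high} (partial entropy along a one-dominated direction in any dimension) and derives Theorem~\ref{Thm:Cr-long-unstable} from it, whereas your proposal targets the surface case only.
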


{For some $C^r$ dissipative surface diffeomorphism, a condition on the Jacobian of the map can imply the existence of long stable manifolds, but not necessarily long unstable manifolds. See \cite{CrP18,Oba21} for instance.}

It is clear that Theorem~\ref{Thm:C-infty-long} follows from Theorem~\ref{Thm:Cr-long-unstable}. These two theorems are possible to get both long stable manifold and long unstable manifold, since in dimension $2$, by applying the Ruelle inequality \cite{Rue78}, every ergodic measure $\mu$ with positive entropy is hyperbolic, i.e., $\lambda_1(\mu)>0>\lambda_2(\mu)$ in the Oseledec theorem.

{Note that Buzzi-Crovisier-Sarig \cite{BCS24} proved the following remarkable result: $C^\infty$ surface diffeomorphisms with positive topological entropy possess the strongly positive recurrence property.
Specifically, there exists a Pesin block such that for each ergodic measure $\mu$ close to the measures of maximal entropy, the Pesin block has a large $\mu$-measure. Consequently, they showed that for ergodic measures close to the measures of maximal entropy, there are points having both long stable manifolds and long unstable manifolds simultaneously. 
With the ``simultenuous'' property, they have very nice properties: the ergodic measures are homoclinically related, and can be coded in one topological Markov shift, etc. 
In our theorem, we can know the existence of long stable manifold and long unstable manifold, but not simultaneously. 
So we do not have other consequences like theirs. 
However, in our main theorems, we do not have to assume the measures are close to the measures of maximal entropy and the size of unstable manifold is uniform with respect to the value of the metric entropy.}

{We also learned that M. Gh\'ezal \cite{Ghezal25}  has proved that for $C^2$ diffeomorphisms, when the metric entropy is larger than some dynamical quantity, some typical points have both long stable manifold and long unstable manifold simultaneously.}

One of the main tools of the proof of Theorem~\ref{Thm:Cr-long-unstable} is based on Yomdin theory \cite{Yom87}, and its recent important progress by Burguet \cite{Bur24,Bur24P}. It mainly uses Yomdin theory for $1$-dimensional curves. Thus, some corresponding versions of Theorem~\ref{Thm:Cr-long-unstable} and Theorem~\ref{Thm:C-infty-long} hold in the higher dimensional case if we deal with the $1$-dimensional entropy.

From the Oseledec theorem stated above, for an ergodic measure $\mu$ {with positive Lyapunov exponents}, one says that $\mu$ has one-dominated Lyapunov exponent, if $\dim E^1=1$ as $E^1$ in the Oseledec theorem. In this case, by the Pesin theory, for $C^{2}$ diffeomorphism $f$, one has one-dimensional strong unstable manifold $W^1(x)$ for $\mu$-almost every point $x$. To describe the complexity of the dynamics along $W^1$, one can define the partial entropy $h^1_\mu(f)$ as in \cite{LeY85}. In the higher-dimensional case, we can prove that if this partial entropy is large, then the size of $W^1$ can be uniformly long. Similar to $L^u(\beta)$, one defines $L^{u,1}(\beta)$:
\begin{align*}
&~L^{u,1}(\beta)=\{x\in M:~\exists W_x\subset W^1(x),~\textrm{s.t.}~\exp_x^{-1}W_x~\textrm{is a $C^1$ graph of a map~}\varphi:~E^1(x)\to (E^1(x))^\perp,\\
&~~~~{\rm Lip}(\varphi)\le 1/3,~{\rm Domain}(\varphi)\supset E^1(x)(\beta)\}.
\end{align*}

\begin{theoremalph}\label{Thm:Cr-long-unstable-high}
Suppose that $M$ is a compact Riemannian manifold without boundary of any dimension. Let $1<r\in\NN$ and $f:~M\to M$ is a $C^r$ diffeomorphism. For any $\alpha>0$, there exist constants $\beta>0$ and $c>0$ such that for any ergodic measure $\mu$ with one dominated Lyapunov exponent, if $$h^1_{\mu}(f)>\alpha+\frac{R(f)}{r},$$ then $\mu(L^{u,1}(\beta))>c$.
\end{theoremalph}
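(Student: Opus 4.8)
The plan is to transport the Yomdin--Burguet machinery for one--dimensional curves --- the tool behind Theorems~\ref{Thm:C-infty-long} and \ref{Thm:Cr-long-unstable} --- onto the strong unstable foliation $\mathcal W^1$ and the Ledrappier--Young partial entropy $h^1_\mu(f)$. Since $r\ge 2$ and $E^1$ is one--dimensional, Pesin theory provides, for $\mu$--a.e.\ $x$, a $C^r$ strong unstable manifold $W^1(x)$ tangent to $E^1(x)$, a measurable partition $\xi$ subordinate to $\mathcal W^1$, and conditional measures $\{\mu_x^{\xi}\}$. By the Ledrappier--Young construction \cite{LeY85} and the Brin--Katok formula along leaves,
$$h^1_\mu(f)=\lim_{\rho\to0}\ \limsup_{n\to\infty}\ -\frac1n\log\mu_x^{\xi}\Big(\bigcap_{k=0}^{n-1}f^{-k}B(f^kx,\rho)\Big)\qquad\text{for }\mu\text{-a.e.\ }x;$$
equivalently, by the leafwise form of Katok's formula, $h^1_\mu(f)$ is the exponential growth rate of the minimal number $N_n(x,\rho)$ of $(n,\rho)$--dynamical balls needed to cover a subset of $\xi(x)$ of $\mu_x^{\xi}$--measure $\ge\tfrac12$. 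Fixing a Pesin block $\Lambda$ with $\mu(\Lambda)$ close to $1$ on which $W^1_{\rm loc}$ has size $\ge\delta_0>0$, the local leaves have uniformly bounded $C^r$ geometry, and the Oseledec data are uniform, it suffices to bound $\limsup_n\frac1n\log N_n(x,\rho)$ from above on a positive--measure subset of $\Lambda$, for small $\rho$.

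\textbf{Reparametrization and the dichotomy.} Cover a unit--size $C^r$ arc $\gamma\subset W^1_{\rm loc}(x)$ and apply Burguet's $C^r$ reparametrization theorem for curves \cite{Bur24,Bur24P}: for every $\epsilon>0$ there is $C=C(\epsilon,f)$ so that, for all $n$, $\gamma$ is covered by at most $C\,e^{n(R(f)/r+\epsilon)}$ affine reparametrizations $\theta_i$ of $[0,1]$ whose blocks $f^k\circ\gamma\circ\theta_i$, $0\le k\le n$, are dynamically ``tame'' (uniformly controlled rescaled $C^r$--norm). A tame block fails to be an $(n,\rho)$--dynamical ball only through forward images that are still long, and subdividing such a block into $\rho$--pieces in the bounded--geometry, bi--Lipschitz intrinsic metric of $W^1_{\rm loc}(f^k x)$ costs a bounded amount \emph{per time $k$ at which $f^k x$ sits in a $\beta$--good Pesin block}; the graph--transform description of Pesin unstable manifolds identifies the latter, after shrinking constants, with $f^k x\in L^{u,1}(\beta)$ for a suitable $\beta=\beta(\delta_0,\Lambda)$. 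This yields
$$\frac1n\log N_n(x,\rho)\ \le\ \frac{R(f)}{r}+\epsilon+\frac1n\sum_{k=0}^{n-1}\psi_\beta(f^kx),$$
where $\psi_\beta\ge0$ is bounded and essentially supported on $L^{u,1}(\beta)$, with $\int\psi_\beta\,d\mu\le\Phi\big(\mu(L^{u,1}(\beta))\big)$ for a universal modulus $\Phi$ with $\Phi(t)\to0$ as $t\to0$ --- obtained from a Shannon--McMillan estimate for the $\{0,1\}$--valued ``in $L^{u,1}(\beta)$ or not'' process, whose per--visit entropy gain is $\le\lambda_1(\mu)$ by the Ruelle inequality \cite{Rue78} but is switched on only a $\mu(L^{u,1}(\beta))$--fraction of the time.

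\textbf{Conclusion.} Birkhoff's theorem turns the Ces\`aro average into $\int\psi_\beta\,d\mu$, so $h^1_\mu(f)\le R(f)/r+\epsilon+\Phi\big(\mu(L^{u,1}(\beta))\big)$. Given $\alpha>0$: fix $\epsilon<\alpha/2$; choose $c>0$ with $\Phi(c)<\alpha/2$; then choose $\beta>0$ small enough that all identifications above hold with these constants. If $h^1_\mu(f)>\alpha+R(f)/r$, the displayed bound forces $\Phi\big(\mu(L^{u,1}(\beta))\big)>\alpha/2$, hence $\mu(L^{u,1}(\beta))>c$. (Read forward, the same estimates say that once a definite $\mu$--fraction of orbit points lands in a $\beta$--good Pesin block, pushing a short unstable segment forward until its length reaches $\beta$ exhibits a $\beta$--large unstable manifold at a positive frequency of times, which gives the lower bound on $\mu(L^{u,1}(\beta))$ directly.)

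\textbf{Main obstacle.} The crux is the dichotomy step: getting the reparametrization count with the sharp defect $R(f)/r$ \emph{and} showing that the entire surplus beyond $R(f)/r$ is time--localised to visits to a Pesin--block version of $L^{u,1}(\beta)$, with a modulus $\Phi$ that is uniform over \emph{all} ergodic $\mu$ and does not secretly depend on $\mu$ through the geometry constants of the Pesin blocks. This is precisely what the recent refinements of Yomdin theory by Burguet \cite{Bur24,Bur24P} are designed to supply.
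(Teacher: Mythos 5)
Your proposal follows essentially the same route as the paper: convert the Ledrappier--Young partial entropy into a leafwise covering/counting quantity along a local strong unstable arc, apply Burguet's $C^r$ reparametrization lemma to obtain the $R(f)/r$ defect, and observe that the additional subdivisions needed to keep forward images short are only incurred at times when the orbit visits $L^{u,1}(\beta)$ (since otherwise a long image curve would itself witness membership in $L^{u,1}(\beta)$), so that $\mu(L^{u,1}(\beta))\le c$ would force $h^1_\mu(f)\le R(f)/r+\alpha$, a contradiction. The only notable difference is that you invoke a leafwise Katok/Brin--Katok covering formula as known, whereas the paper derives the required counting characterization of $h^1_\mu(f)$ from the conditional-measure definition (its Propositions~\ref{Prop:Two Balls} and~\ref{Pro:unstable-entropy-partition}).
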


\begin{theoremalph}\label{Thm:C-infty-long-unstable-high}
Suppose that $M$ is a compact Riemannian manifold without boundary of any dimension and $f:~M\to M$ is a $C^\infty$ diffeomorphism. For any $\alpha>0$, there exist $\beta>0$ and $c>0$ such that for any ergodic measure $\mu$ with one dominated Lyapunov exponent, if $$h^1_{\mu}(f)>\alpha,$$ then $\mu(L^{u,1}(\beta))>c$.
\end{theoremalph}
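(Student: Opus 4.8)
The plan is to deduce Theorem~\ref{Thm:C-infty-long-unstable-high} from the $C^r$ version (Theorem~\ref{Thm:Cr-long-unstable-high}) by the standard trick of letting $r\to\infty$. Fix $\alpha>0$. Since $f$ is $C^\infty$, it is in particular $C^r$ for every integer $r>1$, and the quantity $R(f)=\lim_n\frac1n\log\|Df^n\|_{\rm sup}$ is a finite constant depending only on $f$ (it is bounded by $\log\|Df\|_{\rm sup}$). Choose $r\in\NN$ large enough that $R(f)/r<\alpha/2$. Then apply Theorem~\ref{Thm:Cr-long-unstable-high} with the parameter $\alpha/2$ in place of $\alpha$: this produces constants $\beta=\beta(f,\alpha)>0$ and $c=c(f,\alpha)>0$ such that for any ergodic measure $\mu$ with one dominated Lyapunov exponent satisfying
\[
h^1_\mu(f)>\frac{\alpha}{2}+\frac{R(f)}{r},
\]
one has $\mu(L^{u,1}(\beta))>c$. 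Since $\frac{\alpha}{2}+\frac{R(f)}{r}<\frac{\alpha}{2}+\frac{\alpha}{2}=\alpha$, the hypothesis $h^1_\mu(f)>\alpha$ implies the displayed inequality, and the conclusion follows for exactly these $\beta$ and $c$.

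The one point that deserves care is that the integer $r$, and hence the constants $\beta,c$ coming from Theorem~\ref{Thm:Cr-long-unstable-high}, may be chosen uniformly over all ergodic measures $\mu$: this is automatic here because $r$ depends only on $f$ and $\alpha$ (through the fixed quantity $R(f)$), not on $\mu$, and Theorem~\ref{Thm:Cr-long-unstable-high} already provides $\beta,c$ uniform in $\mu$. One should also check that the notion of ``one dominated Lyapunov exponent'' and the partial entropy $h^1_\mu(f)$, as well as the set $L^{u,1}(\beta)$, are intrinsic to $(f,\mu)$ and do not change when $f$ is merely regarded as a $C^r$ map rather than a $C^\infty$ one; this is clear from their definitions via the Oseledec splitting and the Ledrappier--Young partial entropy along $W^1$.

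I do not expect any genuine obstacle in this argument: it is a soft limiting reduction, entirely analogous to the way Theorem~\ref{Thm:C-infty-long} is said in the excerpt to follow from Theorem~\ref{Thm:Cr-long-unstable}. The only mild subtlety is cosmetic — making sure that replacing $\alpha$ by $\alpha/2$ (rather than, say, taking $r$ so large that $R(f)/r$ is smaller than any prescribed $\delta$ and then playing with $\alpha-\delta$) is done cleanly so that the final constants depend only on $f$ and $\alpha$. All the substantive work — the Yomdin-type estimates and the construction of long strong unstable manifolds from large partial entropy — is contained in Theorem~\ref{Thm:Cr-long-unstable-high}, which we are entitled to assume.
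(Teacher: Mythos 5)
Your reduction is correct and is exactly how the paper obtains this theorem: it is stated there as an immediate corollary of Theorem~\ref{Thm:Cr-long-unstable-high}, obtained by fixing $r$ large enough (depending only on $f$ and $\alpha$, since $R(f)\le\log\|Df\|_{\sup}$ is finite) that the error term $R(f)/r$ is absorbed into $\alpha$. Nothing further is needed.
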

Clearly Theorem~\ref{Thm:C-infty-long-unstable-high} can be obtained as a corollary of Theorem~\ref{Thm:Cr-long-unstable-high}. Theorem~\ref{Thm:Cr-long-unstable-high} can imply Theorem~\ref{Thm:Cr-long-unstable}.
Remark that the measurability of $L^u(\beta)$ and $L^{u,1}(\beta)$ will be checked in Appendix~\ref{Sec:measurability}.

Another interesting subject is the flow $(\varphi^t)_{t\in\mathbb R}$ generated by vector field $X$ over a compact Riemannian manifold $M$ without boundary. In contrast to diffeomorphisms, to study dynamics of flows, usually the difficulties come from the existence of singularities, where the vector field vanishes. See for instance \cite{GaY18,MPP04}. 

However, if we want to establish similar theorems for vector fields, it seems the singularities do not give any trouble because to consider the time-one map of the flow is sufficient to get a result similar to above theorems. For instance, we have the following theorem for three-dimensional vector fields:

\begin{theoremalph}\label{Thm:C-infty-long-vector-field}
Suppose that $M$ is a three-dimensional compact Remiannian manifold without boundary. 
Let $X:~M\to M$ be a $C^\infty$ vector field and $(\varphi^t)_{t\in\mathbb R}$ be the flow generated by $X$. For any $\alpha>0$, there are $\beta>0$ and $c>0$ such that for any ergodic measure $\mu$, if 
$$h_{\mu}(X)=h_{\mu}(\varphi^1)>\alpha,$$ then $\mu(L^u(\beta))>c$ and $\mu(L^s(\beta))>c$.
\end{theoremalph}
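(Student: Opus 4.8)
The plan is to reduce Theorem~\ref{Thm:C-infty-long-vector-field} to the higher-dimensional statement Theorem~\ref{Thm:C-infty-long-unstable-high}, applied to the time-one diffeomorphism $\varphi^1$ for the unstable conclusion and to $\varphi^{-1}=(\varphi^1)^{-1}$ — the time-one map of the flow generated by $-X$ — for the stable conclusion. Both are $C^\infty$ diffeomorphisms of the three-manifold $M$, so for the given $\alpha>0$ Theorem~\ref{Thm:C-infty-long-unstable-high} furnishes constants $\beta_0>0$ and $c_0>0$; I would take $\beta=\beta_0$ and $c=c_0$ (they can be chosen equal for $\varphi^1$ and $\varphi^{-1}$, since $-X$ carries the same measure-theoretic data, or one simply passes to minima and uses that $L^u(\beta)$, $L^s(\beta)$ grow as $\beta$ shrinks).

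First I would read off the Oseledec data of an ergodic flow-invariant measure $\mu$ with $h_\mu(X)=h_\mu(\varphi^1)>\alpha$, viewed as $\varphi^1$-invariant. Measures carried by $\Sing(X)$ have zero entropy, so $\mu$-almost every point is regular for $X$; at such a point $X(x)$ spans a $D\varphi^1$-invariant line, and since $\|X(\varphi^n x)\|\le\|X\|_{\sup}$ while Poincar\'e recurrence together with $X(x)\neq 0$ prevents $\|X(\varphi^n x)\|$ from decaying to $0$, the Lyapunov exponent along $X$ equals $0$. Ruelle's inequality applied to $\varphi^1$ and to $\varphi^{-1}$ forces at least one positive and at least one negative exponent, so in dimension three the Oseledec spectrum must be $\lambda_1(\mu)>0=\lambda_2(\mu)>\lambda_3(\mu)$ with all three Oseledec subspaces one-dimensional. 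In particular $\mu$ has one dominated Lyapunov exponent, $E^1=E^u$ is the strong unstable line, and $E^3=E^s$ the strong stable line; and since $\lambda_1$ is the unique positive exponent, the Ledrappier--Young formula (equivalently, equality of unstable metric entropy with metric entropy for $C^2$ maps) gives $h^1_\mu(\varphi^1)=h_\mu(\varphi^1)=h_\mu(X)>\alpha$, and symmetrically $h^1_\mu(\varphi^{-1})=h_\mu(\varphi^{-1})=h_\mu(X)>\alpha$.

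If $\mu$ is $\varphi^1$-ergodic, Theorem~\ref{Thm:C-infty-long-unstable-high} applied to $\varphi^1$ gives $\mu(L^{u,1}(\beta))>c$, and as $E^u=E^1$ is one-dimensional $\mu$-a.e.\ we have $L^{u,1}(\beta)=L^u(\beta)$, so $\mu(L^u(\beta))>c$; applying it to $\varphi^{-1}$, whose strong unstable manifold is the strong stable manifold of $\varphi^1$, gives $\mu(L^s(\beta))>c$. If $\mu$ is only flow-ergodic, I would pass to its $\varphi^1$-ergodic decomposition $\mu=\int\mu_\omega\,d\omega$: since each $\varphi^s$ commutes with $\varphi^1$ and fixes $\mu$, it permutes the ergodic components by push-forward, so $\omega\mapsto h_{\mu_\omega}(\varphi^1)$ is flow-invariant, hence $\mu$-a.e.\ constant, and equal to $h_\mu(\varphi^1)>\alpha$ by affinity of entropy. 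Each $\mu_\omega$ then satisfies the hypotheses above, giving $\mu_\omega(L^u(\beta))>c$ and $\mu_\omega(L^s(\beta))>c$ for a.e.\ $\omega$; since the sets $L^u(\beta)$ and $L^s(\beta)$ are intrinsic (they do not depend on the measure), integrating over $\omega$ yields $\mu(L^u(\beta))>c$ and $\mu(L^s(\beta))>c$.

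All the real content sits in Theorem~\ref{Thm:C-infty-long-unstable-high}; the difficulties here are of reduction type. I expect two points to need genuine care: (i) the identity $h^1_\mu(\varphi^1)=h_\mu(\varphi^1)$, i.e.\ that the neutral flow direction conceals no entropy, which I would justify through the Ledrappier--Young formula; and (ii) the passage through the $\varphi^1$-ergodic decomposition of a merely flow-ergodic $\mu$, where one must verify both that all components still have entropy larger than $\alpha$ and that $L^u(\beta)$, $L^s(\beta)$ are measure-independent so the estimate survives integration.
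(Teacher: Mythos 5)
Your reduction is correct and is exactly the route the paper intends: the authors only remark that Theorem~\ref{Thm:C-infty-long-vector-field} "is a consequence of Theorem~\ref{Thm:C-infty-long-unstable-high}", and your argument supplies precisely the missing details (zero entropy on $\Sing(X)$, zero exponent along the flow, Ruelle's inequality forcing the spectrum $\lambda_1>0=\lambda_2>\lambda_3$ so that $\mu$ has one dominated exponent and $h^1_\mu(\varphi^{1})=h_\mu(\varphi^{1})$, application to $\varphi^{1}$ and $\varphi^{-1}$, and the passage through the $\varphi^1$-ergodic decomposition). No gaps.
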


Note that Theorem~\ref{Thm:C-infty-long-vector-field} is a consequence of Theorem~\ref{Thm:C-infty-long-unstable-high}. Thus the main work of this paper is to prove Theorem~\ref{Thm:Cr-long-unstable-high}.

\section*{Acknowledgments}
{We are grateful to D. Burguet for his lectures and his comments, and to S. Crovisier for his comments.}

%The main theorems have a dominated version for $C^1$ surface diffeomorphisms. $M$ is a compact Riemannian manifold without boundary of any dimension and $f:~M\to M$ is a $C^1$ diffeomorphisms. Assume that $\Lambda$ is a compact invariant set of $f$. A $Df$-invariant splitting $T_\Lambda M=E\oplus F$ is \emph{dominated} if there are $C>0$ and $\lambda\in(0,1)$ such that for any $x\in\Lambda$ and for any $n\in\NN$, one has that $\|Df^n|_{F(x)}\|\|Df^{-n}|_{E(f^n(x))}\|\le C\lambda^n$. The following result was known

\section{The entropy theory}\label{Sec:entropy}

For a probability measure $\mu$ (not necessarily invariant), for a finite partition $\mathcal P$, define the \emph{static entropy} of $\mu$:
$$H_\mu(\mathcal P)=\sum_{P\in\mathcal P}-\mu(P)\log\mu(P)=\int -\log\mu(P(x)){\rm d}\mu(x).$$
For a diffeomorphism $f$, one defines
$${\mathcal P}^n=\bigvee_{j=0}^{n-1}f^{-j}(\mathcal P).$$
For an invariant measure $\mu$, the metric entropy of $\mu$ with respect to a partition $\mathcal P$ is
$$h_\mu(f,\mathcal P)=\lim_{n\to\infty}\frac{1}{n}H_\mu({\mathcal P}^n);$$
and the \emph{metric entropy} of $\mu$ is defined to be 
$$h_\mu(f)=\sup\{h_\mu(f,\mathcal P): \mathcal{P}~\textrm{is a finite partition of}~M \}.$$
Note that Theorem~\ref{Thm:Cr-long-unstable-high} and Theorem~\ref{Thm:C-infty-long-unstable-high} are results on ``entropy along an invariant foliation'' or ``partial entropies'' as in Ledrappier-Young \cite{LeY85}.

Let $f$ be a $C^r$ diffeomorphism and $\mu$ be an ergodic measure.
Recall the Oseledec theorem as in the introduction, consider $i_0$ such that $\lambda_{i_0}(\mu)>0$. 
As noticed by \cite{Rue79,LeY85}, denote $E=E^1\bigoplus E^2\bigoplus\cdots\bigoplus E^{i_0}$ and define
$$W^E(x):=\left\{y\in M:~\limsup_{n\to\infty}\frac{1}{n}\log d(f^{-n}(x),f^{-n}(y))\le-\lambda_{i_0}\right\}.$$
Then $W^E(x)$ is a $C^r$ $\dim E$-dimensional immersed submanifold of $M$ tangent to $E(x)$ at $x$ for $\mu$-almost every point $x$. 
Each $W^E(x)$ inherits a Riemannian metric from $M$. The distance is denoted by $d^E_{x}$. 
With this distance, one can define $(n,\rho)$-Bowen balls:
$$V^E(x,n,\rho):=\{y\in W^E(x):~d^E_{f^j(x)}(f^j(x),f^j(y))<\rho,~\forall 0\le j<n\}.$$

From \cite{LeS82}, one knows that exists a measurable partition $\xi$ subordinate to $W^E$, i.e., for $\mu$-almost every point $x$, $\xi(x)\subset W^E(x)$ and contains an open neighborhood of $x$ in $W^E(x)$. 
From Rokhlin \cite{Rok67}, associated to each measurable partition $\xi$, there is a system of conditional measures $\{\mu_{\xi(x)}\}$.

As in \cite{LeY85}, the partial entropy along $W^E$ with respect to $\xi$ is defined to be
\begin{equation}\label{eq:09}
	h^E_\mu(x,\xi,f)=\lim_{\rho\to 0}\liminf_{n\to\infty}-\frac{1}{n}\log(\mu_{\xi(x)}(V^E(x,n,\rho)))=\lim_{\rho\to 0}\limsup_{n\to\infty}-\frac{1}{n}\log(\mu_{\xi(x)}(V^E(x,n,\rho))).
\end{equation}
The limits exist for $\mu$-almost every $x$. 
Also noticed in \cite{LeY85}, since $\mu$ is ergodic, $h^E_\mu(x,\xi,f)$ does not depend on $x$; and furthermore, it does not depend on the choice of the measurable partition $\xi$. Hence the partial entropy along $W^E$ for an ergodic measure $\mu$ is defined to be $h^E_\mu(x,\xi,f)$, and is denoted by $h^E_\mu(f)$.

\smallskip

Also noticed in \cite{LeY85}, if $\lambda_{i_0+1}\le 0$ as in the Oseledec theorem, then $h^E_\mu(f)=h_\mu(f)$.
When $E$ is $1$-dimensional, we also denote  $h^1_\mu(f)=h^E_\mu(f)$ as in the statement of Theorem~\ref{Thm:Cr-long-unstable-high} and Theorem~\ref{Thm:C-infty-long-unstable-high}.

\smallskip

Let $x\in M$, $n\in \NN$ and $\rho>0$, the usual Bowen ball is defined by
$$B(x,n,\rho):=\{y\in M:d(f^j(x),f^j(y))< \rho, \ \forall 0\leq j< n\}.$$
It is clear that for $\mu$-almost every $x$ 
$$V^{E}(x,n,\rho)\subset W^{E}_{\rm loc}(x)\cap B(x,n,\rho),$$
and so, we have $\mu_{\xi(x)}(V^E(x,n,\rho))\leq \mu_{\xi(x)}(B(x,n,\rho))$.

\begin{Proposition}\label{Prop:Two Balls}
	For any $\varepsilon>0$ and any $\delta>0$, there exists $K\subset M$ with $\mu(K)>1-\delta$ and $\rho>0$, such that 
	$$\forall x\in K,  \quad h_{\mu}^{E}(f)\leq \liminf_{n\rightarrow +\infty} \frac{\log \mu_{\xi(x)}\left(K\cap B(x,n,\rho)\right)}{-n}+\varepsilon.$$
\end{Proposition}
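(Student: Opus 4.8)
The plan is to reduce the statement, via Egorov's theorem, to a uniform estimate for the conditional measures of the \emph{intrinsic} Bowen balls $V^{E}$, and then to compare the ambient Bowen ball $B(x,n,\rho)$ with an intrinsic Bowen ball whose order is comparable to $n$ along the orbit of $x$. I begin with the entropy input. By definition, for $\mu$-a.e.\ $x$ the quantity $\liminf_{n}-\frac1n\log\mu_{\xi(x)}(V^{E}(x,n,\rho))$ increases to the constant $h^{E}_{\mu}(f)$ as $\rho\downarrow 0$ (for a fixed $n$ it increases because $\mu_{\xi(x)}(V^{E}(x,n,\rho))$ decreases with $\rho$). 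Applying Egorov's theorem --- once to this convergence in $\rho$, once to the $\liminf$ in $n$ --- I obtain $\rho_{1}>0$, $N_{1}\in\NN$ and a set $K_{1}$ with $\mu(K_{1})>1-\delta/2$ such that $\mu_{\xi(z)}(V^{E}(z,n,\rho_{1}))\le e^{-n(h^{E}_{\mu}(f)-\varepsilon/2)}$ for every $z\in K_{1}$ and every $n\ge N_{1}$.

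Next I fix the set $K$. Intersect $K_{1}$ with a Pesin block on which the usual uniform estimates along $W^{E}$ hold: local unstable manifolds of uniform size $\rho_{0}$, bounded distortion of $Df^{\pm j}|_{TW^{E}}$, comparability of $d^{E}$ with a Lyapunov (adapted) metric in which $f$ expands $W^{E}$ uniformly, and the standard characterization of $W^{E}_{\mathrm{loc}}(z)$, for $z$ in the block, as the set of points of $W^{E}(z)$ whose backward orbit stays $\rho_{0}$-close to that of $z$; I also arrange the atoms of the subordinate partition to satisfy $\xi(z)\subset W^{E}_{\mathrm{loc}}(z)$ with $d^{E}$-diameter small compared with $\rho_{0}$. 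Since $h^{E}_{\mu}(f)$ is bounded above by the topological entropy of $f$, hence by a finite constant depending only on $f$, I may take the block large enough that the resulting $K\subset K_{1}$ satisfies $\mu(K)>1-\delta$ and, in addition, $\mu(K^{c})$ is so small that $(1-2\mu(K^{c}))(h^{E}_{\mu}(f)-\varepsilon/2)\ge h^{E}_{\mu}(f)-\varepsilon$. Finally I take $\rho>0$ small compared with $\rho_{0}$ and $\rho_{1}$.

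The core step is a geometric comparison. Fix $x\in K$ lying in the full-measure set where Birkhoff's theorem applies to the indicator of $K$. For all large $n$ the set $\{0\le j<n:\ f^{j}x\in K\}$ has at least $n(1-2\mu(K^{c}))$ elements; let $j^{\ast}=j^{\ast}(x,n)$ be its largest element, so $j^{\ast}\ge n(1-2\mu(K^{c}))$. The claim is that $\xi(x)\cap K\cap B(x,n,\rho)\subset V^{E}(x,j^{\ast}+1,\rho_{1})$. Indeed, let $y$ be in the left-hand side. Since $y\in\xi(x)\subset W^{E}_{\mathrm{loc}}(x)$, the backward iterates $f^{-\ell}y$ remain inside the backward-contracting local manifolds $W^{E}_{\mathrm{loc}}(f^{-\ell}x)$, so $d(f^{-\ell}x,f^{-\ell}y)<\rho_{0}$ for all $\ell\ge 0$; together with the ambient Bowen condition $d(f^{j}x,f^{j}y)<\rho$ for $0\le j<n$, this shows that the backward orbit of $f^{j^{\ast}}y$ stays $\rho_{0}$-close to that of $f^{j^{\ast}}x$. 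As $f^{j^{\ast}}x\in K$ and $f^{j^{\ast}}y\in W^{E}(f^{j^{\ast}}x)$, the characterization of the local unstable manifold on the block forces $f^{j^{\ast}}y\in W^{E}_{\mathrm{loc}}(f^{j^{\ast}}x)$, whence $d^{E}(f^{j^{\ast}}x,f^{j^{\ast}}y)<\rho_{1}$; and since $f$ expands the leafwise metric along $W^{E}$, we get $d^{E}(f^{j}x,f^{j}y)\le d^{E}(f^{j^{\ast}}x,f^{j^{\ast}}y)<\rho_{1}$ for every $0\le j\le j^{\ast}$, i.e.\ $y\in V^{E}(x,j^{\ast}+1,\rho_{1})$. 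Granting this, for $x\in K$ and $n$ large (so that $j^{\ast}+1\ge N_{1}$) one has
\[
\mu_{\xi(x)}\big(K\cap B(x,n,\rho)\big)=\mu_{\xi(x)}\big(\xi(x)\cap K\cap B(x,n,\rho)\big)\le\mu_{\xi(x)}\big(V^{E}(x,j^{\ast}+1,\rho_{1})\big)\le e^{-(j^{\ast}+1)(h^{E}_{\mu}(f)-\varepsilon/2)},
\]
so that $\liminf_{n}\frac{\log\mu_{\xi(x)}(K\cap B(x,n,\rho))}{-n}\ge(1-2\mu(K^{c}))(h^{E}_{\mu}(f)-\varepsilon/2)\ge h^{E}_{\mu}(f)-\varepsilon$, which is the assertion after rearranging.

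The main obstacle is the geometric claim, and specifically the passage from the ambient Bowen condition to the intrinsic one at the recurrence time $j^{\ast}$: one must rule out that the leaf $W^{E}$ folds back near the orbit of $x$ and makes $f^{j^{\ast}}y$ ambient-close to but $d^{E}$-far from $f^{j^{\ast}}x$. This is where the uniform Pesin estimates on $K$ are needed --- in particular the fact that, for a block point $z$, a point of $W^{E}(z)$ whose backward orbit stays $\rho_{0}$-close to that of $z$ must lie in the local unstable manifold $W^{E}_{\mathrm{loc}}(z)$ --- together with the observation that the orbit of $y$ shadows that of $x$ both forward up to time $j^{\ast}$ and backward forever. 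The remaining ingredients (the Egorov reductions, the Birkhoff estimate on $j^{\ast}$, and the bookkeeping with $\varepsilon$ and $\mu(K)$) are routine; the freedom to take $\mu(K)$ as close to $1$ as one wishes --- the proposition only requires $\mu(K)>1-\delta$ --- is what lets the multiplicative loss $1-2\mu(K^{c})$ be absorbed into $\varepsilon$.
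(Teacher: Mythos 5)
Your reductions (Egorov for the uniform decay of $\mu_{\xi(z)}(V^{E}(z,n,\rho_{1}))$, the Birkhoff estimate for $j^{\ast}$, and the absorption of the factor $1-2\mu(K^{c})$ into $\varepsilon$) are fine, but the core geometric claim
$\xi(x)\cap K\cap B(x,n,\rho)\subset V^{E}(x,j^{\ast}+1,\rho_{1})$
has two genuine gaps, both caused by the orbit times $0\le j<j^{\ast}$ at which $f^{j}x\notin K$. First, the ``characterization of the local unstable manifold'' you invoke at time $j^{\ast}$ --- that a point of the global leaf $W^{E}(f^{j^{\ast}}x)$ whose backward orbit stays $\rho_{0}$-close to that of $f^{j^{\ast}}x$ must lie in $W^{E}_{\mathrm{loc}}(f^{j^{\ast}}x)$ --- is exactly the no-folding statement you flag as the main obstacle, and membership of the single point $f^{j^{\ast}}x$ in a Pesin block does not give it: at a backward time $\ell$ where $f^{j^{\ast}-\ell}x$ is outside every uniform block, a piece of leaf of leafwise radius $\sim\rho_{0}$ need not be a graph with bounded geometry, so the leafwise distance can cross the threshold $\rho_{0}$ while the ambient distance stays far below $\rho_{0}$; no contradiction arises and the inclusion is unproved. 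Second, even granting leafwise closeness at time $j^{\ast}$, the backward propagation $d^{E}(f^{j}x,f^{j}y)\le d^{E}(f^{j^{\ast}}x,f^{j^{\ast}}y)$ for $j<j^{\ast}$ uses pointwise expansion of $f$ along $W^{E}$ in the \emph{Riemannian} leaf metric, which fails off the block; passing to a Lyapunov metric does not help because $V^{E}(x,n,\rho_{1})$ in \eqref{eq:09} is defined with $d^{E}_{f^{j}x}$ and the comparison constant $C(f^{j}x)$ is unbounded along the orbit. In fact the clean single inclusion you want is false in general: $\xi(x)\cap K\cap B(x,n,\rho)$ can consist of several leafwise-far-apart strands of the folded leaf.

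The paper's proof is built precisely to avoid both issues: it fixes a good set $F$ on which two uniform local-product properties hold ($f(V^{E}(f^{-1}x,1,3\rho))\subset W^{E}_{\mathrm{loc}}(x)$ and $d^{E}_{x}\bigl(B(y,\rho)\cap W^{E}_{\mathrm{loc}}(x)\bigr)<3\rho$), classifies points by the set $\cE$ of bad times, and inductively covers $K_{\cE}\cap B(x,n,\rho)\cap W^{E}_{\mathrm{loc}}(x)$ by submanifolds whose leafwise diameter is $\le 3\rho$ at \emph{every} time $0\le i<n$: at good times no new pieces are needed, and at bad times one brutally subdivides into at most $\|Df\|_{\sup}^{\dim M}$ pieces. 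Each final piece meeting $K$ sits in some $V^{E}(x_{j}^{n},n,6\rho)$ centered at a (varying) point of $K$, and the total number of pieces times the number of types $\cE$ is $e^{o(n)+n\varepsilon/3+n\varepsilon/3}$, which is exactly the subexponential loss your single-inclusion strategy was trying to avoid but cannot. To repair your argument you would need to replace the inclusion into one intrinsic Bowen ball by a covering by subexponentially many of them, which is essentially the paper's argument.
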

\begin{proof}
	For every $\varepsilon>0$,  choose $\varepsilon'\in (0,\varepsilon)$, such that for sufficiently large  $n$, one has that
	$$\sum_{j=0}^{\lceil n\varepsilon' \rceil} \tbinom{n}{j}\leq e^{n\varepsilon/3}.$$	
	For every $\delta>0$, choose a subset $F$ with 
	$$\mu(F)>1-\min\left \{\frac{\varepsilon'}{5\|Df\|^{\dim M}_{\sup}},\frac{\delta}{2} \right\}$$ and choose $\rho_{F}>0$ such that for every $0<\rho<\rho_{F}$ and every $x\in F$ the following hold
	\begin{enumerate}
		\item[(1)] $f(V^{E}(f^{-1}(x),1,3\rho))\subset W^E_{\loc}(x)$;
		\item[(2)] $d^{E}_x\left(B(y,\rho)\cap W^E_{\loc}(x)\right)< 3\rho$ for any $y\in M$.
	\end{enumerate}	
	By Equation \eqref{eq:09}, there exists $K\subset F$ with $\mu(K)>1-\delta$, $\rho:=\rho_K\in (0,\rho_{F})$ and $N=N_k\in \NN$ such that 
	\begin{align*}
		 \#\{0\leq k < n:f^k(x)\notin F \}\leq \min\left\{\frac{n\varepsilon}{4\|Df\|_{\sup}^{\dim M}},n\varepsilon'\right\},&~\forall n\geq N,~\forall x\in K; \\
		\mu_{\xi(x)}\left(V^{E}(x,n,6\rho)\right)\leq \exp\left(-n(h^{E}_{\mu}(f)-\dfrac{\varepsilon}{3})\right),&~\forall n\geq N,~\forall x\in K.
	\end{align*}
	Fix $n\geq N$ large enough, let $E(x)=\{0\leq k < n:f^k(x)\notin F\}$, then by the choice of $\varepsilon'$ we have
	$$\# \left\{E(x):x \in K\right\}\leq \sum_{j=0}^{\lceil n\varepsilon' \rceil} \tbinom{n}{j}\leq e^{\frac{n\varepsilon}{3}}.$$
	For a fixed type ${\cal E}\subset [0,n)$, we denote $K_{\cal E}:=\{x\in K,\ E(x)={\cal E}\}$. 
	Then $K$ is covered by at most $e^{\frac{n\varepsilon}{3}}$ disjoint subsets of the form $K_{\cal E}$, and
	$$\forall x\in K, \quad \mu_{\xi(x)}\left(K\cap B(x,n,\rho)\right)\leq e^{\frac{n\varepsilon}{3}} \sup_{\cal E}\mu_{\xi(x)}(K_{\cal E}\cap B(x,n,\rho)).$$
	We now fix a type $\cal E$ such that $ \mu_{\xi(x)}(K_{\cal E}\cap B(x,n,\rho))>0$ and the $\sup$ is obtained.
	\begin{Claim}
		For any $1 \leq k\leq n$, there exist sub-manifolds $\{D_1^k,\cdots D_{c(k)}^k\}$ of $W^{E}_{\loc}(x)$ such that
		\begin{enumerate}
			\item[(a)] $K_{\cal E}\cap B(x,k,\rho)\cap W^{E}_{\loc}(x) \subset \bigcup_{j=1}^{c(k)}D_j^k$;
			\item[(b)] $K_{\cal E}\cap D_j^k\neq \emptyset$, $\diam_{d^{E}_{f^i(x)}}\big (f^i(D_j^k)) \big)\leq 3\rho$ for every $0 \leq i < k$ and every $1\leq j\leq c(k)$;
			\item[(c)] $c(k)\leq \|Df\|_{\sup}^{\#\{0\leq i< k:i\in \cE\}\cdot \dim M}$.
		\end{enumerate}
	\end{Claim}
	\begin{proof}[Proof of the claim]
		For $k=1$, since $K \subset F$ one has $0\notin \cE$. By the choice of $F$, it follows that
		$$K_{\cal E}\cap B(x,\rho) \cap W^{E}_{\loc}(x)\subset  B(x,\rho) \cap W^{E}_{\loc}(x):=D_1^1,$$
		and  $\diam_{d^{E}_{x}}(D_1^1)<3\rho$. 
		This proves the case for $k=1$.
		
		Assume that the claim holds for $k$, then there exist sub-manifolds $\{D_1^k,\cdots D_{c(k)}^k\}$ that satisfy the conclusion of the claim. 
		We now show that the claim also holds for $k+1$. 
		Note that 
		$$B(x,k+1,\rho)=f^{-k}(B(f^k(x),\rho))\bigcap B(x,k,\rho).$$
		It suffices to show that for each $D\in \{D_1^k,\cdots D_{c(k)}^k\}$ with 
		$$f^k(D)\cap f^k(K_{\cE}) \cap B(f^k(x),\rho)\neq \emptyset,$$
		there exist sub-manifolds $\{D(1),\cdots,D(m)\} $ of $D$ such that 
		\begin{enumerate}
			\item [(i)]  $f^{k}(D)\cap B(f^k(x),\rho)\cap f^{k}(K_{\cE}) \subset f^{k} \left(\bigcup_{j=1}^{m} D(j)\right);$
			\item [(ii)]  $D(j)\cap K_{\cE}  \neq \emptyset$ and $\diam_{d_{f^i(x)}^{E}}(f^iD(j))\leq 3\rho, \ \forall 0\leq i\leq k, \ \forall j=1,\cdots,m$;
			\item [(iii)] $m=1, \ k\notin \cE$ or $m\leq \|Df\|_{\sup}^{\dim M}, \ k\in \cE$.
		\end{enumerate}

		If $k\notin {\cal E}$, choose $z\in K_{\cE}\cap D$, then $f^k(z)\in F$. 
		By the choice of $F$, it follows that 
		\begin{align*}
			f^{k}(K_{\cE}) \cap f^k (D)\cap B(f^{k}(x),\rho)\subset W^E_{\loc}(f^{k}(z))\cap B(f^{k}(x),\rho).
		\end{align*}
		Let $D(1)=D\cap f^{-k}(B(f^{k}(x),\rho))$.
		Since $D(1)\subset D$ and $\diam_{d^{E}_{f^k(x)}}(W^E_{\loc}(f^{k}(z))\cap B(f^{k}(x),\rho))\leq 3\rho$,
        we have
		$\diam_{d^{E}_{f^i(x)}}(f^i(D(1)))<3\rho$
	    for every $0\leq i\leq k$.    	
		
		If $k\in {\cal E}$, since $\diam_{d^E_{f^{k-1}(x)}}f^{k-1}(D)\leq 3\rho$, we can decompose $f^k(D)$ into a sequence of sub-manifolds $\{D^k(1),\cdots,D^k(m_1)\}$ such that $m_1\leq \|Df\|_{\sup}^{\dim M}$ and $\diam_{d^{E}_{f^k(x)}}(D^k(i))\leq 3\rho$ for any $i=1,\cdots,m_1$.
		Let $\{D(1),\cdots D(m)\}$ be the set of all $f^{-k}(D^k(j))$ with $1\leq j\leq m_1$ and $D^k(j)\cap B(f^k(x),\rho) \cap f^k(K_{\cE})\neq \emptyset$.  Then $m\leq \|Df\|_{\sup}^{\dim M}$ and
		\begin{align*}
		 f^{k}(D)\cap B(f^k(x),\rho)\cap f^{k}(K_{\cE}) \subset f^{k} \left(\bigcup_{j=1}^{m} D(j)\right),~\diam_{d^{E}_{f^i(x)}}(f^iD(j))<3\rho,~\forall 1\leq i\leq k,~\forall 1\leq j\leq m.
		\end{align*}
		Thus, the claim is proved.
	\end{proof}
	
	By the claim, it follows that 
	$$\mu_{\xi(x)}\left(K_{\cE}\cap B(x,n,\rho)\right)\leq \sum_{j=1}^{c(n)} \mu_{\xi(x)}(D_j^n).$$
	For each $1\leq j\leq c(n)$, choose $x_j^n\in D_j^n \cap K$. 
	Then, we have  $D_j^n\subset V^E(x_j^n,n,6\rho)$.
	By the choice of $\rho$, we have
	\begin{align*}
		\mu_{\xi(x)}\left(K\cap B(x,n,\rho)\right) &\leq e^{\frac{n\varepsilon}{3}} \cdot c(n) \cdot e^{-n(h_{\mu}^{E}(f)-\varepsilon/3)} \\
		&\leq e^{\frac{n\varepsilon}{3}}\cdot e^{(n\varepsilon/3) \frac{\log \|Df\|^{\dim M}_{\sup}}{\|Df\|^{\dim M}_{\sup}} } \cdot e^{-n(h_{\mu}^{E}(f)-\varepsilon/3)} \\
		&\leq  e^{-n(h_{\mu}^{E}(f)-\varepsilon)}.
	\end{align*}
	This completes the proof.
\end{proof}

By Proposition \ref{Prop:Two Balls}, for $\varepsilon>0$ we can choose a compact set $K$ with $\mu(K)>1/2$ and $\rho>0$ such that 
\begin{equation}\label{eq:LimK}
	\forall x\in K,  \quad {h}_{\mu}^{E}(f)\leq \liminf_{n\rightarrow +\infty} -\frac{1}{n}\log \mu_{\xi(x)}\left(K\cap B(x,n,\rho)\right)+\varepsilon.
\end{equation}
Let $x_0\in K, \Sigma \subset W^E_{\loc}(x_0)$ with $\mu_{\xi(x_0)}(K\cap \Sigma)>0$.
\begin{Proposition}\label{Pro:unstable-entropy-partition}
Let $\varepsilon$, $\rho$, $K$ and $\Sigma$ be chosen above,
for any finite partition $\cal P$ satisfying ${\rm Diam}({\cal P})<\rho$ one has
$${h}^E_{\mu}(f)\le\liminf_{n\to\infty}\frac{1}{n}\log\#\{P\cap K\cap \Sigma\neq\emptyset,~P\in{\cal P}^n\}+\varepsilon.$$

\end{Proposition}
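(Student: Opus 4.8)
The plan is to convert the pointwise Bowen‑ball estimate \eqref{eq:LimK} into a lower bound for the number of atoms of $\mathcal P^n$ meeting $K\cap\Sigma$, by a standard covering argument. First I would normalize the setup. Write $\nu:=\mu_{\xi(x_0)}$. Since $\nu$ is carried by the atom $\xi(x_0)$, we have $\nu\big(K\cap\Sigma\cap\xi(x_0)\big)=\nu(K\cap\Sigma)>0$, and replacing $\Sigma$ by $\Sigma\cap\xi(x_0)$ can only shrink the counting function $\#\{P\in\mathcal P^n:P\cap K\cap\Sigma\neq\emptyset\}$; hence it suffices to prove the inequality under the extra assumption $\Sigma\subset\xi(x_0)$. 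Under this assumption $\mu_{\xi(x)}=\nu$ for $\nu$-a.e.\ $x\in\Sigma$, so after discarding a $\nu$-null subset of $\Sigma$ we may assume that for every $x\in K\cap\Sigma$,
$$h^E_\mu(f)\le\liminf_{n\to\infty}-\frac1n\log\nu\big(K\cap B(x,n,\rho)\big)+\varepsilon.$$
The one geometric fact I need is that ${\rm Diam}(\mathcal P)<\rho$ forces $P\subset B(x,n,\rho)$ whenever $P\in\mathcal P^n$ and $x\in P$: writing $P=\bigcap_{j=0}^{n-1}f^{-j}(Q_j)$ with $Q_j\in\mathcal P$, for $y\in P$ both $f^j(x)$ and $f^j(y)$ lie in $Q_j$, so $d(f^j(x),f^j(y))<\rho$ for $0\le j<n$.

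Then I would run the covering argument. Fix $\eta>0$ and, for $n\in\NN$, set
$$G_n:=\Big\{x\in K\cap\Sigma:\ \nu\big(K\cap B(x,n,\rho)\big)\le e^{-n(h^E_\mu(f)-\varepsilon-\eta)}\Big\},$$
which is Borel, since $(x,y)\mapsto\BBone_{B(x,n,\rho)}(y)$ is the indicator of an open subset of $M\times M$, so its partial $\nu$-integral against $\BBone_K$ is Borel in $x$. By the displayed inequality, each $x\in K\cap\Sigma$ lies in $G_n$ for all large $n$, so $\bigcup_{m\ge1}\bigcap_{n\ge m}G_n=K\cap\Sigma$; choose $m$ with $\theta:=\nu\big(\bigcap_{n\ge m}G_n\big)\ge\tfrac12\nu(K\cap\Sigma)>0$ and put $G:=\bigcap_{n\ge m}G_n$. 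For $n\ge m$, the atoms $P\in\mathcal P^n$ that meet $G$ cover $G$, and there are at most $N_n:=\#\{P\in\mathcal P^n:P\cap K\cap\Sigma\neq\emptyset\}$ of them because $G\subset K\cap\Sigma$. For each such $P$ pick $x_P\in P\cap G$; then $P\cap G\subset P\subset B(x_P,n,\rho)$, hence $\nu(P\cap G)\le\nu\big(K\cap B(x_P,n,\rho)\big)\le e^{-n(h^E_\mu(f)-\varepsilon-\eta)}$. Summing over these atoms gives $\theta\le\nu(G)\le N_n\,e^{-n(h^E_\mu(f)-\varepsilon-\eta)}$, i.e.\ $\tfrac1n\log N_n\ge h^E_\mu(f)-\varepsilon-\eta+\tfrac1n\log\theta$; letting $n\to\infty$ and then $\eta\to0$ yields $\liminf_{n}\tfrac1n\log N_n\ge h^E_\mu(f)-\varepsilon$, which is the claim.

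I do not expect a serious obstacle: this is a routine Shannon–McMillan–Breiman-style covering estimate, and the inequality ${\rm Diam}(\mathcal P)<\rho$ is exactly what lets each Bowen‑ball bound from \eqref{eq:LimK} be distributed across a single atom of $\mathcal P^n$. The only steps that require a little care are the reduction to $\Sigma\subset\xi(x_0)$ (so that one fixed conditional measure $\nu$ is used throughout, and \eqref{eq:LimK} applies with that measure) and the uniformization that passes from the $x$‑by‑$x$ $\liminf$ bound to a bound on a set $G$ of definite positive $\nu$-measure valid for all $n\ge m$; both are standard measure‑theoretic manipulations.
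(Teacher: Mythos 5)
Your argument is correct. The core inputs are the same as in the paper's proof --- the pointwise estimate \eqref{eq:LimK} applied with the single conditional measure $\nu=\mu_{\xi(x_0)}$ (legitimate after your harmless reduction to $\Sigma\subset\xi(x_0)$, since $\nu$ is carried by the atom and the count only shrinks), together with the observation that ${\rm Diam}(\mathcal P)<\rho$ forces $\mathcal P^n(x)\subset B(x,n,\rho)$. Where you diverge is in the final conversion from the pointwise $\liminf$ bound to the counting bound: the paper bounds $\log\#\{P\in\mathcal P^n:\,P\cap K\cap\Sigma\neq\emptyset\}$ from below by the static entropy $H_{\mu_{\xi(x_0),K}}(\mathcal P^n)$ of the normalized restricted conditional measure and then applies Fatou's lemma to push the integral inside the $\liminf$, whereas you perform an Egorov-type uniformization (the sets $G_n$ and the positive-measure set $G=\bigcap_{n\ge m}G_n$) and then a direct mass-distribution count: at most $N_n$ atoms cover $G$, each carrying $\nu$-mass at most $e^{-n(h^E_\mu(f)-\varepsilon-\eta)}$. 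The two mechanisms are equivalent in strength here; yours is slightly more elementary in that it avoids the entropy functional and Fatou, at the modest cost of the auxiliary parameter $\eta$ and of verifying Borel measurability of $x\mapsto\nu\bigl(K\cap B(x,n,\rho)\bigr)$, which you correctly dispose of via joint measurability of $\{(x,y):y\in B(x,n,\rho)\}$.
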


\begin{proof}

We consider the probability measure $\mu_{\xi(x_0),K}$ by the following way:
$$\forall~\textrm{Borel set}~A,~~~\mu_{\xi(x_0),K}(A):=\mu_{\xi(x_0),K}^{\Sigma}(A)=\frac{\mu_{\xi(x_0)}(\Sigma\cap K\cap A)}{\mu_{\xi(x_0)}(K\cap \Sigma)}.$$
By the definition of the static entropy $H$, one has
$$H_{\mu_{\xi(x_0),K}}({\cal P}^n)\leq \log\#\{P\cap K\cap \Sigma\neq\emptyset,~P\in{\cal P}^n\}.$$
Hence, it suffices to show that
$${h}^{E}_{\mu}(f)\le\liminf_{n\to\infty}\frac{1}{n}H_{\mu_{\xi(x_0),K}}({\cal P}^n)+\varepsilon.$$
Note that 
 \begin{align*}
	\liminf_{n\rightarrow +\infty} \frac{1}{n} H_{\mu_{\xi(x_0),K}}({\cal P}^n)
	&=   \liminf_{n\rightarrow +\infty} \int -\frac{1}{n} \log  \mu_{\xi(x_0),K}({\cal P}^n(y)) \ {\rm d} \mu_{\xi(x_0),K}(y), \ \ 
	\text{by the definition}     \\
	&\geq \int\liminf_{n\rightarrow +\infty} -\frac{1}{n} \log  \mu_{\xi(x_0),K}({\cal P}^n(y)) \ {\rm d} \mu_{\xi(x_0),K}(y), \ \
	\text{by Fatou's Lemma.}    
\end{align*}
By the definition of $\mu_{\xi(x_0),K}$, one has 
$$\mu_{\xi(x_0),K}({\cal P}^n(y))=\dfrac{\mu_{\xi(x_0)}({\cal P}^n(y) \cap K\cap \Sigma)}{\mu_{\xi(x_0)}(K\cap \Sigma)}.$$
For any $y\in K \cap \xi(x_0)$, we have
$$\mu_{\xi(x_0)}({\cal P}^n(y) \cap K\cap \Sigma)\leq \mu_{\xi(y)}({\cal P}^n(y) \cap K)\leq \mu_{\xi(y)}(B(y,n,\rho) \cap K).$$
By Equation \eqref{eq:LimK} and the measure $\mu_{\xi(x_0),K}$ supported on $K\cap \xi(x_0)$ one has
$$\liminf_{n\rightarrow \infty} -\frac{1}{n} \log  \mu_{\xi(x_0),K}({\cal P}^n(y))\geq {h}^{E}_{\mu}(f)-\varepsilon \quad  \text{for} \  \mu_{\xi(x_0),K}\text{-}a.e. \ y.$$
Therefore, we have 
$$\liminf_{n\rightarrow \infty} \frac{1}{n} H_{\mu_{\xi(x_0),K}}({\cal P}^n)\geq {h}_{\mu}^{E}(f)-\varepsilon.$$
This completes the proof.
\end{proof}

\section{Yomdin-Burguet's reparametrization lemma}

We recall the notion of bounded curves by Burguet \cite{Bur12,Bur24}.
A curve in $M$ is identical to a $C^1$ map $\sigma:~[-1,1]\to M$. Denote by $\sigma_*$ the image of $\sigma$, i.e., $\sigma_*=\sigma([-1,1])\subset M$. Given $L>0$, a $C^r$-curve $\sigma$ is said to be a \emph{$1/L$-curve}, if $$\max_{s=2,\cdots,r}\|D^s\sigma\|_{\rm sup}\le \frac{1}{L}\|D\sigma\|_{\rm sup}.$$ 
Given $L>0$ and $\varepsilon>0$, a $C^r$  $1/L$-curve is said to be $\varepsilon$-strongly bounded, if $\|D\sigma\|_{\rm sup}\le\varepsilon$.
 
 For a linear normed space $A$ and $\delta>0$, we use the notation
 $A(\delta):=\{v\in A:~\|v\|<\delta\}$.
 Since $M$ is a compact manifold, one can choose $r(M)>0$ and $\rho(M)>0$ such that for any $x\in M$, the exponential map $\exp_x$ is injective on $T_x M(r(M))$, and $\exp_x(T_x M(r(M)))\supset B(x,\rho(M))$. 
  
A curve $\sigma$ is said to be \emph{essentially $1/3$-graph}, if there exists $x\in \sigma_*$, a linear subspace $E\subset T_xM$, a subset $E_\sigma\subset E$ containing $0$, and a $C^1$ map $\varphi:~E\to E^\perp$ whose Lipschitz constant is less than $1/3$, such that $\varphi(0)=0$, $\sigma_*\subset B(x,\rho(M))$ and $\exp_x^{-1} \sigma_*=\{(v,\varphi(v)):~v\in E_\sigma\}$. An essential $1/3$-graph is $\beta$-large, if in the above definition, one has that $E_\sigma\supset E(\beta)$.

\begin{Lemma}\label{Lem:curve-graph}
There is $L>0$ such that for any  $1/L$-curve $\sigma$, if ${\rm diam}(\sigma_*)<\rho(M)$ and $\|D\sigma(0)\|<1/L$, then $\sigma$ is an essentially $1/3$-graph.
\end{Lemma}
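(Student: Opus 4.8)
The plan is to work entirely inside a single exponential chart and to reduce the statement to an elementary estimate on Taylor remainders. Fix a $1/L$-curve $\sigma:[-1,1]\to M$ with $\operatorname{diam}(\sigma_*)<\rho(M)$ and $\|D\sigma(0)\|<1/L$. Let $x=\sigma(0)$ and put $\tau=\exp_x^{-1}\circ\,\sigma:[-1,1]\to T_xM$, which is well-defined and $C^1$ (indeed as smooth as $\sigma$) because $\sigma_*\subset B(x,\rho(M))\subset\exp_x(T_xM(r(M)))$ and $\exp_x$ is a diffeomorphism onto its image there. Since $M$ is compact, the charts $\exp_x$ and their inverses have derivatives up to order $r$ bounded uniformly in $x$ by a constant depending only on $M$; hence $\tau$ is again a curve whose higher derivatives are controlled: there is $C_0=C_0(M)$ with $\max_{s=2,\dots,r}\|D^s\tau\|_{\sup}\le C_0\bigl(\|D\sigma\|_{\sup}+\max_{s=2,\dots,r}\|D^s\sigma\|_{\sup}\bigr)\le C_0(1+1/L)\|D\sigma\|_{\sup}$, and similarly $\|D\tau\|_{\sup}\le C_0\|D\sigma\|_{\sup}$ and $\|D\tau(0)\|\le C_0\|D\sigma(0)\|<C_0/L$. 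Up to shrinking, I may and will assume $L\ge 1$, so these read $\max_{s\ge2}\|D^s\tau\|_{\sup}\le 2C_0\|D\sigma\|_{\sup}$.

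Next I set $E=\operatorname{span}(D\tau(0))$ if $D\tau(0)\ne 0$ (and $E$ any line through $0$ otherwise; this degenerate case is handled the same way, with $\varphi\equiv 0$), and let $\pi:T_xM\to E$, $\pi^\perp:T_xM\to E^\perp$ be the orthogonal projections. Write $\tau=\pi\tau+\pi^\perp\tau=:(a,b)$ in the splitting $E\oplus E^\perp$. The goal is to show that $s\mapsto a(s)$ is a diffeomorphism onto its image $E_\sigma\subset E$, so that $\tau_*$ is the graph of $\varphi:=b\circ a^{-1}:E_\sigma\to E^\perp$, and then to bound $\operatorname{Lip}(\varphi)$. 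The key point is that the ratio $\|D\tau(s)-D\tau(0)\|/\|D\tau\|_{\sup}$ is small for all $s$ when $L$ is large: by the mean value theorem applied to $D\tau$ (here using $r\ge 2$, which holds since $1<r\in\NN$), $\|D\tau(s)-D\tau(0)\|\le 2\,\|D^2\tau\|_{\sup}\le 4C_0\|D\sigma\|_{\sup}$. But I also need a \emph{lower} bound $\|D\sigma\|_{\sup}\le\kappa(M)\|D\tau\|_{\sup}$ (again from uniform bi-Lipschitz control of $\exp_x$ on the relevant ball), so $\|D\tau(s)-D\tau(0)\|\le 4C_0\kappa(M)\,\|D\tau\|_{\sup}$. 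Now $\|D\tau\|_{\sup}$ could a priori be much larger than $\|D\tau(0)\|$, so I instead argue: either $\|D\tau\|_{\sup}$ is itself small, or there is $s_1$ with $\|D\tau(s_1)\|=\|D\tau\|_{\sup}$, and then... — the cleaner route is to observe $\|D\tau(s)\|\le\|D\tau(0)\|+2\|D^2\tau\|_{\sup}\le C_0/L+4C_0\kappa(M)\|D\tau\|_{\sup}/(\text{something})$; to avoid circularity I use $\|D^2\tau\|_{\sup}\le 2C_0\|D\sigma\|_{\sup}$ and, using $L\ge 8C_0\kappa(M)C_0$, absorb: $\|D\tau\|_{\sup}\le\|D\tau(0)\|+2\cdot 2C_0\|D\sigma\|_{\sup}\le\|D\tau(0)\|+4C_0\kappa(M)\|D\tau\|_{\sup}\cdot(1/L)\cdot(\text{const})$, hence for $L$ large $\|D\tau\|_{\sup}\le 2\|D\tau(0)\|\le 2C_0/L$. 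Feeding this back, $\|D\tau(s)-D\tau(0)\|\le 4C_0\kappa(M)\cdot(2C_0/L)\le 1/100$ once $L$ is large enough.

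With $\|D\tau(s)-D\tau(0)\|\le 1/100$ for all $s$, the conclusion follows by standard graph-transform bookkeeping: $a'(s)=\pi D\tau(s)$ has $\|a'(s)-D\tau(0)\|\le 1/100$ while $D\tau(0)\in E$ with $\|D\tau(0)\|=\|D\tau\|_{\sup}\cdot(\text{ratio }\ge 1/2)$ (after the normalization above, or simply $a'(s)\ne0$ since its deviation from $D\tau(0)$ is tiny relative to $\|D\tau(0)\|$), so $a$ is strictly monotone, hence a $C^1$ diffeomorphism onto an interval $E_\sigma\ni 0$ of $E$; and $b'(s)=\pi^\perp D\tau(s)$ satisfies $\|b'(s)\|=\|\pi^\perp(D\tau(s)-D\tau(0))\|\le 1/100$. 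Therefore $\operatorname{Lip}(\varphi)=\sup_s\|b'(s)\|/\|a'(s)\|\le (1/100)/(1-1/100)<1/3$, $\varphi(0)=b(0)/\dots=0$ since $\tau(0)=\exp_x^{-1}(x)=0$, and $\exp_x^{-1}\sigma_*=\tau_*=\{(v,\varphi(v)):v\in E_\sigma\}$, which is exactly the definition of an essentially $1/3$-graph. The main obstacle, and the only place requiring care, is the bootstrapping in the previous paragraph: controlling $\|D\tau\|_{\sup}$ (not just $\|D\tau(0)\|$) by the second-derivative bound, since a priori the curve's speed could vary wildly — the resolution is that the $1/L$-condition forces the speed to be nearly constant, so $\|D\tau\|_{\sup}\le 2\|D\tau(0)\|$ for $L$ large, after which everything is linear-algebra routine. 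All constants ($C_0,\kappa,r(M),\rho(M)$) depend only on $M$, so a single $L=L(M)$ works.
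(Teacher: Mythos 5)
Your overall strategy is the same as the paper's (pass to the chart $\exp_x^{-1}$, take $E$ tangent to the curve at $x=\sigma(0)$, and show the tangent direction barely varies along the curve; the paper only sketches this and defers the estimate to Burguet). However, your quantitative bookkeeping breaks down exactly at the decisive step. To get $\operatorname{Lip}(\varphi)<1/3$ you need the deviation $\|D\tau(s)-D\tau(0)\|$ to be small \emph{relative to} $\|D\tau(0)\|$, since $\operatorname{Lip}(\varphi)=\sup_s\|b'(s)\|/\|a'(s)\|$ and $\|a'(s)\|\ge\|D\tau(0)\|-\|D\tau(s)-D\tau(0)\|$. What your chain of inequalities actually produces is the absolute bound $\|D\tau(s)-D\tau(0)\|\le 1/100$, while the hypothesis $\|D\sigma(0)\|<1/L$ forces $\|D\tau(0)\|\le C_0/L$, which is far \emph{smaller} than $1/100$. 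Consequently your final estimate $\operatorname{Lip}(\varphi)\le(1/100)/(1-1/100)$ is unjustified: the lower bound you have for the denominator is $\|D\tau(0)\|-1/100$, which is negative, and even the strict monotonicity of $a$ (needed for $\tau_*$ to be a graph at all) does not follow. The same defect appears in your absorption step, where the factor ``$(1/L)\cdot(\mathrm{const})$'' is inserted without derivation; with the constants you actually established, the coefficient of $\|D\tau\|_{\sup}$ on the right-hand side is $4C_0\kappa(M)$, which cannot be made $<1$ by enlarging $L$.

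The source of the trouble is that you discarded the factor $1/L$ when transporting the second-derivative bound through the chart: in $\|D^2\tau\|_{\sup}\le C_0(\|D\sigma\|_{\sup}+\max_{s\ge2}\|D^s\sigma\|_{\sup})$ the first term (the quadratic chain-rule contribution $\|D\sigma\|_{\sup}^2$ crudely majorized by $\|D\sigma\|_{\sup}$) carries no smallness. The repair is short. First bootstrap at the level of $\sigma$ itself: $\|D\sigma\|_{\sup}\le\|D\sigma(0)\|+\|D^2\sigma\|_{\sup}\le\frac1L+\frac1L\|D\sigma\|_{\sup}$, hence $\|D\sigma\|_{\sup}\le\frac{1}{L-1}$ and also $\|D\sigma\|_{\sup}\le 2\|D\sigma(0)\|$ for $L\ge2$ (in particular $D\sigma(0)=0$ forces $\sigma$ constant, the degenerate case). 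Then
$\|D^2\tau\|_{\sup}\le C(M)\bigl(\|D^2\sigma\|_{\sup}+\|D\sigma\|_{\sup}^2\bigr)\le \frac{C'(M)}{L}\|D\sigma\|_{\sup}\le\frac{C''(M)}{L}\|D\tau(0)\|$,
using the $1/L$-curve condition, $\|D\sigma\|_{\sup}\le 2\|D\sigma(0)\|$ and the uniform bi-Lipschitz control of $\exp_x$. This yields the genuinely relative bound $\|D\tau(s)-D\tau(0)\|\le\frac{2C''(M)}{L}\|D\tau(0)\|$, after which your concluding graph and Lipschitz argument is correct once $L$ is large enough that $\frac{2C''(M)}{L}<1/4$.
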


\begin{proof}
Take $x=\sigma(0)$. Since ${\rm diam}(\sigma_*)<\rho(M)$, one can consider the curve $\exp_x^{-1}\circ\sigma$ in $T_x M$. By taking $E=T_x\sigma_*$ one can represent it as a graph of a map $\psi$ from a subset of $E$ to $E^\perp$.

 By following the proof of \cite[Section 4.1]{Bur24}, one know that the tangent space of every point $\exp_x^{-1}\circ\sigma(t)$ is close to $E$. Thus we know the Lipchitz constant of $\psi$ is less than $1/3$.
\end{proof}

Fix $L$ as in Lemma~\ref{Lem:curve-graph}. Then we simply call $1/L$-curves by bounded curves by following Burguet \cite{Bur24}. Note that in Burguet \cite{Bur24}, $L$ is taken to be $6$. We have to adapt the notion a bit since we want to control the Lipschitz constant precisely.

\begin{Lemma}\label{Lem:size-lower-bounde}
Given $\varepsilon>0$, there is $\beta_\varepsilon>0$ such that if $\sigma$ is a bounded curve and $\|D\sigma(t)\|\ge \varepsilon$ for some $t\in[-1,1]$, then $\sigma$ is $2\beta_\varepsilon$-large.
\end{Lemma}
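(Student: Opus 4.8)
The plan is to combine the curvature bound (the $1/L$-curve condition) with the lower bound on the speed at a single parameter $t$ to deduce a uniform lower bound on $\|D\sigma\|_{\sup}$, and then to show that near the point of maximal speed the curve genuinely ``spreads out'' so that its $\exp^{-1}$-image, written as a graph over $E = T_x\sigma_*$, has domain containing a ball of controlled radius. First I would observe that a $1/L$-curve satisfies $\|D^2\sigma\|_{\sup}\le \tfrac1L\|D\sigma\|_{\sup}$, so the function $t\mapsto\|D\sigma(t)\|$ has derivative bounded by $\tfrac1L\|D\sigma\|_{\sup}$; hence by a Gr\"onwall-type estimate, if $\|D\sigma(t_0)\|\ge\varepsilon$ for some $t_0$, then $\|D\sigma(s)\|\ge \varepsilon\, e^{-|s-t_0|/L}\ge \varepsilon\, e^{-2/L}$ for all $s\in[-1,1]$ (using $L\ge 1$, say), so in particular $\|D\sigma\|_{\sup}\ge\varepsilon\,e^{-2/L}$ and also $\|D\sigma(0)\|\ge \varepsilon e^{-2/L}$. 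Actually, since we need $\sigma$ to be an essential $1/3$-graph to even speak of its domain, I would first note that the hypotheses of Lemma~\ref{Lem:curve-graph} are met once $\operatorname{diam}(\sigma_*)<\rho(M)$ — which is part of the standing meaning of ``bounded curve'' in this paper — and that $\|D\sigma(0)\|$ may be assumed small (if not, rescale; or simply note the conclusion is about largeness, and we can reparametrize so $0$ is not the fast point). More carefully: by the speed lower bound just derived, $\sigma$ has speed comparable to $\varepsilon$ everywhere, so its length is at least $\sim 2\varepsilon e^{-2/L}$.

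Next I would pass to the chart: set $x=\sigma(0)$, $E=T_x\sigma_*$, and write $\exp_x^{-1}\circ\sigma(t)=(u(t),\varphi(u(t)))$ with $\operatorname{Lip}(\varphi)\le 1/3$, as furnished by Lemma~\ref{Lem:curve-graph}. Because $\varphi$ is $1/3$-Lipschitz, the Euclidean length of the curve $t\mapsto(u(t),\varphi(u(t)))$ is at most $\sqrt{1+1/9}\,$ times the length of its projection $t\mapsto u(t)$ to $E$; combined with the fact that $\exp_x^{-1}$ is bi-Lipschitz with constants close to $1$ on the relevant ball, the $E$-projection $u$ has total variation at least a definite constant multiple of $\varepsilon e^{-2/L}$. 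Since $u(0)=0$, the image $u([-1,1])$ is a connected subset of $E\cong\RR$ (here we crucially use that the curves are $1$-dimensional, i.e.\ $\dim E=1$) containing $0$ and of length $\ge c\varepsilon$ for an explicit $c=c(L)>0$; hence $u([-1,1])\supset E(\beta')$ for some $\beta'$ proportional to $\varepsilon$ — or at worst $u([-1,1])$ contains a subinterval of that length abutting $0$, and after translating the base point $x$ to an interior point of $\sigma_*$ we get a genuinely two-sided interval. Thus $E_\sigma\supset E(2\beta_\varepsilon)$ with $2\beta_\varepsilon := c\varepsilon$ (shrinking $c$ if needed to absorb the bi-Lipschitz and projection constants), which is exactly the assertion that $\sigma$ is $2\beta_\varepsilon$-large.

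The step I expect to be the main obstacle is the passage from ``the curve is long'' to ``the domain of the graph map contains a symmetric ball $E(2\beta_\varepsilon)$ centred at $0$'': a priori the fast parameter $t_0$ could be near an endpoint $\pm1$, so the curve might extend far to one side of $x=\sigma(0)$ but hardly at all to the other, giving only a one-sided interval. The clean fix is to not insist the base point be $\sigma(0)$: in the definition of essential $1/3$-graph the point $x$ is merely required to lie in $\sigma_*$, so I would choose $x$ to be $\sigma(t_\ast)$ where $t_\ast$ realizes a median position of the arclength, or simply apply the one-sided length estimate on each of $[t_\ast,1]$ and $[-1,t_\ast]$ after re-centring; the curvature bound guarantees the speed stays $\gtrsim\varepsilon$ on both halves, so both one-sided images have length $\gtrsim\varepsilon$ and together they contain a two-sided interval $E(2\beta_\varepsilon)$ about the new base point. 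A secondary technical point to handle with care is ensuring $\operatorname{diam}(\sigma_*)<\rho(M)$ persists and that the implicit function / graph representation from Lemma~\ref{Lem:curve-graph} applies at the re-centred base point — but that lemma's hypotheses are about diameter and a pointwise derivative bound at a single point, and since speeds are uniformly comparable to $\varepsilon$ (assumed small, after the usual normalization that bounded curves are $\varepsilon$-strongly bounded with $\varepsilon<1/L$) these are automatic.
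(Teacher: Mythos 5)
Your argument is correct and follows essentially the same route as the paper's proof: the $1/L$-curve condition yields a uniform lower bound on $\|D\sigma\|$ proportional to $\varepsilon$ — the paper outsources exactly this step to Burguet's~\cite[Section 5.4]{Bur24}, which is what gives the two-sided length bounds on $\sigma([-1,0])$ and $\sigma([0,1])$ — and then the graph representation furnished by Lemma~\ref{Lem:curve-graph} lets one read off a symmetric interval $E(2\beta_\varepsilon)\subset E_\sigma$. Two small remarks: since the $1/L$-condition bounds $\|D^2\sigma\|$ by $\tfrac1L\|D\sigma\|_{\sup}$ rather than by $\tfrac1L\|D\sigma(s)\|$ pointwise, the right estimate is the linear one $\|D\sigma(s)\|\ge(1-\tfrac2L)\|D\sigma\|_{\sup}\ge(1-\tfrac2L)\varepsilon$, not the exponential Gr\"onwall form you wrote (this only changes the constant, so the argument survives); and once you have this uniform speed bound, your re-centering discussion is superfluous, because both $\sigma([-1,0])$ and $\sigma([0,1])$ already have length $\gtrsim\varepsilon$, so the monotone projection $u$ has $u([-1,1])$ a two-sided interval about $u(0)=0$ without moving the base point.
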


\begin{proof}
 By following the proof of \cite[Section 5.4]{Bur24}, the lengths of $\sigma([-1,0])$ and $\sigma([0,1])$ are both larger than $\delta$ for some constant $\delta$ related to $\varepsilon$.
After representing it as the graph of a map $\psi$ in the tangent space, by Lemma~\ref{Lem:curve-graph}, the Lipschitz constant of $\psi$ is less than $1/3$. Thus, one knows that the domain of $\psi$ containing $E(\beta)$ for some $\beta>0$ related to $\delta$, hence related to $\varepsilon$.
\end{proof}

A map $\theta:~[-1,1]\to[-1,1]$ is also called a \emph{reparametrization}. A reparametrization $\theta$ is said to be \emph{affine} if $\theta'$ is constant and positive. Note that an affine reparametrization must be contracting, i.e., $\theta'\le 1$.

One has the following reparametrization lemma from Burguet \cite[Lemma12]{Bur24P}. The statement is a bit different, but the proof is essentially the same.
\begin{Lemma}\label{Lem:reparametrization}
Given $r\ge 2$, there is $C_r>0$ with the following property.

For any $C^r$ diffeomorphism $g:~M\to M$, there exists $\varepsilon_g^0>0$ such that for any $\varepsilon\in (0,\varepsilon_g^0)$, for any $\varepsilon$-strongly bounded $C^r$ curve, for any two integers $k$ and $k'$, there is a family $\Theta$ of affine reparametrizations, such that
\begin{enumerate}
\item $\{t\in[-1,1]:~x=\sigma(t),\lceil\log\|Dg(x)\|\rceil=k,~\lceil\log\|Dg|_{T_x{\sigma_*}}\|\rceil=k'\}\subset \bigcup_{\theta\in\Theta}\theta([-1,1])$
\item $g\circ \sigma\circ\theta$ is bounded;
\item $\#\Theta\le C_r{\rm e}^{\frac{k-k'}{r-1}}$.
\end{enumerate}
\end{Lemma}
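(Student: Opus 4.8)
The plan is to reduce the statement to the classical Yomdin–Gromov algebraic reparametrization lemma (in the form developed by Burguet in \cite{Bur24,Bur24P}) by carefully peeling off the dynamical content into the two level sets indexed by $k$ and $k'$. First I would fix $g$ and choose $\varepsilon_g^0$ small enough that on any ball of radius $\le \varepsilon_g^0$ (in a chart adapted to the exponential map) $g$ is, up to rescaling, a map with all derivatives of order $\le r$ bounded by a constant depending only on $g$ and $r$; this is where the strong boundedness hypothesis $\|D\sigma\|_{\sup}\le\varepsilon$ gets used, since it forces $\sigma_*$ to lie in such a small ball and lets us treat $g\circ\sigma$ as a bounded-geometry object. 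Then on the set in item (1) — call it $Z_{k,k'}$ — we have uniform two-sided control: $\|Dg(x)\|\asymp e^{k}$ and $\|Dg|_{T_x\sigma_*}\|\asymp e^{k'}$ for $x=\sigma(t)$, $t\in Z_{k,k'}$. The point of restricting to $Z_{k,k'}$ is that the quantity $e^{(k-k')/(r-1)}$ that appears in the bound on $\#\Theta$ is exactly the "distortion gap" between the operator norm of $Dg$ and its restriction to the tangent line of the curve, and the Yomdin-type counting produces a number of pieces controlled by a power of this gap.

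The key steps, in order, are: (i) localize and rescale so that $g\circ\sigma$, restricted over $Z_{k,k'}$, becomes (after an affine reparametrization of $[-1,1]$ onto the relevant subinterval, which is why $\Theta$ consists of \emph{affine} maps) a $C^r$ curve whose $C^r$-size is controlled; (ii) apply the semi-algebraic / $C^r$ reparametrization lemma of Yomdin–Gromov–Burguet to this curve to obtain a family of affine reparametrizations $\theta$ making $g\circ\sigma\circ\theta$ into a $1/L$-curve (a bounded curve), i.e. with $\|D^s(\,\cdot\,)\|_{\sup}\le L^{-1}\|D(\,\cdot\,)\|_{\sup}$ for $2\le s\le r$ — this gives items (1) and (2); (iii) bound the cardinality: the number of pieces output by the reparametrization lemma is $\le C_r$ times the $(r-1)$-th root of the ratio between the $C^1$-norm of $g\circ\sigma$ on the relevant subinterval and the $C^r$-norm scale, and on $Z_{k,k'}$ that ratio is $\asymp e^{k}/e^{k'}=e^{k-k'}$, yielding $\#\Theta\le C_r e^{(k-k')/(r-1)}$. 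One subtlety to handle carefully is that $\|Dg|_{T_x\sigma_*}\|$ also involves $\|D\sigma(t)\|$, so when passing from bounds on $\|D(g\circ\sigma)\|$ to bounds on $\|Dg\|$ one must track the factor $\|D\sigma\|$; but since $\sigma$ is already a bounded ($1/L$-)curve, $\|D\sigma\|_{\sup}$ is comparable to $\|D\sigma(t)\|$ for all $t$, so this factor cancels in the ratio $k-k'$ up to an additive $O_r(1)$ absorbed into $C_r$.

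I expect the main obstacle to be step (ii)–(iii) done \emph{uniformly in the pair $(k,k')$ and uniformly over all $\varepsilon$-strongly bounded curves}: one has to be sure that the constant $C_r$ produced by the algebraic reparametrization lemma depends only on $r$ (and the fixed data of $g$ through $\varepsilon_g^0$), not on $\sigma$, on the interval $Z_{k,k'}$, or on the integers $k,k'$ — i.e. that the "bounded geometry" after rescaling really is bounded by a $\sigma$-independent constant. This is precisely the content of the Yomdin theory as organized by Burguet in \cite[Section 4]{Bur24} and \cite[Lemma 12]{Bur24P}, and the proof here is, as the statement says, "essentially the same": one simply checks that replacing Burguet's fixed constant $L=6$ by the $L$ of Lemma~\ref{Lem:curve-graph} (chosen larger, to force the $1/3$-graph property) changes nothing in the counting, only possibly enlarging $C_r$ and shrinking $\varepsilon_g^0$. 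So the write-up amounts to: invoke the localization from strong boundedness, cite Burguet's reparametrization lemma applied over each level set $Z_{k,k'}$, and read off the exponent $(k-k')/(r-1)$ from the distortion ratio.
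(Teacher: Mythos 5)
Your proposal follows the same route as the paper, which gives no independent proof of this lemma: it simply cites Burguet's reparametrization lemma \cite[Lemma 12]{Bur24P} and notes that the statement differs only slightly (the constant $L$ in the definition of bounded curve is adapted), with the proof "essentially the same." Your sketch correctly identifies the two points that need checking — that the exponent $e^{(k-k')/(r-1)}$ is the distortion gap controlled on the level set $Z_{k,k'}$, and that enlarging $L$ only changes $C_r$ and $\varepsilon_g^0$ — so it is consistent with what the paper intends.
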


\begin{Lemma}\label{Lem:more-cutting}
For any $\varepsilon>0$, for any diffeomorphism $g$, if $\sigma$ is $\varepsilon$-strongly bounded, $g\circ\sigma$ is bounded, then there is a family $\Theta_g$ of affine contracting reparametrizations, such that
$$\# \Theta_g\le\|Dg\|_{\rm sup}+2,$$
 and for any $\theta\in\Theta_g$, $g\circ\sigma\circ\theta$ is $\varepsilon$-strongly bounded.
\end{Lemma}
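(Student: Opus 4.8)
The plan is to cut $[-1,1]$ into $N$ equal pieces by affine reparametrizations of slope $1/N$, choosing $N$ as small as possible so that the first derivative of $g\circ\sigma$ gets rescaled below $\varepsilon$. The cardinality bound $\#\Theta_g\le\|Dg\|_{\sup}+2$ will then be forced by the crude estimate $\|D(g\circ\sigma)\|_{\sup}\le\|Dg\|_{\sup}\,\|D\sigma\|_{\sup}\le\varepsilon\,\|Dg\|_{\sup}$, which uses only that $\sigma$ is $\varepsilon$-strongly bounded.

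Concretely, first I would dispose of the degenerate case: if $\|D(g\circ\sigma)\|_{\sup}\le\varepsilon$, then $g\circ\sigma$ is itself $\varepsilon$-strongly bounded (it is a bounded curve by hypothesis), so $\Theta_g=\{\id\}$ works. Otherwise set $\ell:=\|D(g\circ\sigma)\|_{\sup}>\varepsilon$ and $N:=\lceil \ell/\varepsilon\rceil$, so that $N\ge 2$ and, by the crude estimate above, $N\le \ell/\varepsilon+1\le \|Dg\|_{\sup}+1$. Split $[-1,1]$ into $N$ closed subintervals $I_1,\dots,I_N$ of length $2/N$, let $\theta_i\colon[-1,1]\to I_i$ be the increasing affine bijection — it has constant slope $1/N\in(0,1]$, hence is affine and automatically contracting — and put $\Theta_g:=\{\theta_1,\dots,\theta_N\}$. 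Then $\bigcup_i\theta_i([-1,1])=[-1,1]$ and $\#\Theta_g=N\le\|Dg\|_{\sup}+2$.

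It then remains to check that every $g\circ\sigma\circ\theta_i$ is $\varepsilon$-strongly bounded. Since $\theta_i$ is affine of slope $1/N$, the chain rule gives $D^s(g\circ\sigma\circ\theta_i)=N^{-s}\,(D^s(g\circ\sigma))\circ\theta_i$ for every $s$; for $s=1$ this yields $\|D(g\circ\sigma\circ\theta_i)\|_{\sup}=N^{-1}\sup_{I_i}\|D(g\circ\sigma)\|\le N^{-1}\ell\le\varepsilon$ by the choice of $N$. For the $1/L$-curve condition the only delicate point is that passing to the subinterval $I_i$ could a priori make $\sup_{I_i}\|D(g\circ\sigma)\|$ much smaller than $\ell$, which would spoil the ratio between the higher derivatives and the first derivative of the restricted curve. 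Here I would use that $L$ from Lemma~\ref{Lem:curve-graph} is large: from $\|D^2(g\circ\sigma)\|_{\sup}\le\frac1L\ell$ and the mean value inequality, the oscillation of $\|D(g\circ\sigma)\|$ over $[-1,1]$ is at most $\frac2L\ell$, so $\|D(g\circ\sigma)(t)\|\ge\frac12\ell$ for all $t$, and hence $\sup_{I_i}\|D(g\circ\sigma)\|\ge\frac12\ell$. Consequently, for $2\le s\le r$ one gets $\|D^s(g\circ\sigma\circ\theta_i)\|_{\sup}\le N^{-2}\|D^s(g\circ\sigma)\|_{\sup}\le N^{-2}L^{-1}\ell$, while $\|D(g\circ\sigma\circ\theta_i)\|_{\sup}\ge\frac12 N^{-1}\ell$, so the ratio is at most $2/(NL)\le 1/L$ because $N\ge 2$. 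Thus $g\circ\sigma\circ\theta_i$ is a $1/L$-curve with first derivative at most $\varepsilon$, i.e. $\varepsilon$-strongly bounded.

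I expect the only real obstacle to be exactly this last bookkeeping — propagating the $1/L$-curve inequality through restriction to a subinterval and through the affine slope $1/N$ — and it is handled entirely by the ``almost constant derivative'' property of bounded curves, which is itself a consequence of $L$ being chosen large in Lemma~\ref{Lem:curve-graph}. If one preferred not to lean on that quantitative form, one could instead take $N$ a fixed constant multiple larger, at the cost of replacing $\|Dg\|_{\sup}+2$ by $C\|Dg\|_{\sup}+2$ for a universal $C$, which is harmless in all later applications.
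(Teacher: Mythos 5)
Your proof is correct and follows essentially the same route as the paper: subdivide $[-1,1]$ into roughly $\|Dg\|_{\sup}$ equal affine pieces so that the slope $1/N$ cancels the factor $\|Dg\|_{\sup}$ in the crude bound $\|D(g\circ\sigma)\|_{\sup}\le\varepsilon\,\|Dg\|_{\sup}$. You are in fact more careful than the paper, whose proof only verifies the first-derivative bound $\|D(g\circ\sigma\circ\theta)\|\le\varepsilon$ and silently assumes that the $1/L$-curve condition survives restriction to a subinterval; your oscillation argument (that $\|D(g\circ\sigma)(t)\|\ge\tfrac12\ell$ everywhere because $\|D^2(g\circ\sigma)\|_{\sup}\le\ell/L$ with $L\ge4$) is exactly the bookkeeping needed to close that small gap.
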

\begin{proof}
One can take a family $\Theta_g$ of affine reparametrizations such that $\#\Theta_g\le \lceil\|Dg\|_{\rm sup}\rceil\le\|Dg\|_{\rm sup}+2$ such that for any $\theta\in\Theta_g$, one has $\theta'\le 1/\|Dg\|_{\rm sup}$ and
$$\bigcup_{\theta\in\Theta_g}\theta([-1,1])=[-1,1].$$
We have to check that  $g\circ\sigma\circ\theta$ is $\varepsilon$-strongly bounded. 
Indeed,
$$\|D(g\circ\sigma\circ\theta)\|\le \|Dg\|_{\rm sup}\cdot\|D\sigma\|\cdot \|D\theta\|\le \|Dg\|_{\rm sup}\cdot \varepsilon \cdot \frac{1}{\|Dg\|_{\rm sup}}\le\varepsilon.$$
This completes the proof of the lemma.
\end{proof}

\section{Choose constants and sets}\label{Sec:const}

Let $\alpha>0$ be as in the statement of Theorem~\ref{Thm:Cr-long-unstable-high}.

\paragraph{Choose $q$.} $q$ is a large integer such that
\begin{itemize}
\item \begin{equation}\label{e.spectral}
\frac{1}{q}\log\|Df^q\|_{\rm sup}-R(f)<\alpha/10.\end{equation}
\item \begin{equation}\label{q-delete-logq}
\frac{1}{q}\log q<\alpha/10.\end{equation}
\item \begin{equation}\label{e.q-alpha-r}
q\alpha>\frac{1}{2r}.\end{equation}
\item \begin{equation}\label{e.q-delete-norm}
\frac{1}{q} \log\big(\max\{\log\|Df\|_{\rm sup},~\log\|Df^{-1}\|_{\rm sup}\}+2\big)<\frac{\alpha}{10}.
\end{equation}
\item For the constant $C_r$ in Lemma~\ref{Lem:reparametrization}.
\begin{equation}\label{e.q-delete-Cr}
\frac{1}{q}\log (2C_r)<\alpha/10.
\end{equation}
\end{itemize}

\paragraph{Choose $\beta$} After $q$ is chosen, for the map $g=f^q$, one has the constant $\varepsilon^0_{f^q}$ from Lemma~\ref{Lem:reparametrization}. 
By taking $\varepsilon_g\in (0,\varepsilon^0_{f^q})$, one assumes that
\begin{equation}\label{e.reduce-epsilon-g}
\|Df\|_{\rm sup}^q\varepsilon_g\le 1
\end{equation}
Choose $\beta>0$ such that
\begin{itemize}
\item $\beta<\beta_{\varepsilon_g}$ as in Lemma~\ref{Lem:size-lower-bounde}. %\marginpar{Sep. 28: Reduce further $\beta$ or not?}
\item $$\max_{1\le j\le q}\{\|Df^j\|_{\rm sup}\}\cdot \beta\le\varepsilon_{f^q}.$$
\end{itemize}
\paragraph{Choose $c$.} 

As in Katok's paper \cite[Equation 1.3]{Kat80}, by using Stirling's formula 
\begin{equation*}
	\forall r<\frac{1}{2}, \ \forall k\geq1, \quad \lim_{n\to\infty}\frac{1}{n}\log  \sum_{j=0}^{\lceil nr \rceil} \tbinom{n}{j} \cdot k^j=r\log k-r\log r-(1-r)\log(1-r). 
\end{equation*}
We first choose $\alpha_1>0$ such that there exists $N(\alpha_1)\in \NN$ for which 
\begin{equation}\label{e.choose-alpha-1}
\forall n \geq N(\alpha_1),~\sum_{j=0}^{\lceil n\alpha_1 \rceil} \tbinom{n}{j}\leq e^{n\alpha/10}~\text{and}~\frac{\alpha_1}{q}\log\|Df^q\|_{\rm sup}\leq \alpha/10.
\end{equation}
We then choose $c>0$ by taking
\begin{equation}\label{e.choose-c-alpha1}
c=\frac{\alpha_1}{4q}.
\end{equation}

Assume that $\mu$ is an ergodic measure with one dominated Lyapunov exponent.
We choose a compact set $K:=K_{\mu}$ satisfying $\mu(K)>1/2$ such that the following properties hold:

\begin{enumerate}
	\item The limit measures for points in $K$ converges to $\mu$ uniformly: for any $\varepsilon>0$, there is $N\in\NN$ such that for any $n>N$ and for any $x\in K$, one has that
	$d(\frac{1}{n}\sum_{j=0}^{n-1}\delta_{f^i(x)},\mu)<\varepsilon$.
	
	\item For $n$ large enough, one has that 
	\begin{equation}\label{e.birkhoff-L-u}
	\forall x\in K, \quad \# \{0\leq i<n,f^i(x)\in L^{u,1}(\beta)\}<n (\mu(L^{u,1}(\beta))+c).\end{equation}
	
	\item For $m$ large enough, $$\frac{1}{m}\sum_{j=0}^{m-1}\log\|Df^q|_{E^{u,1}(f^{jq}(y))}\|
\ge \lambda_1(\mu,f^q)-2q\alpha/5$$
	
	\item For $\alpha>0$, there is $\rho>0$ such that for any $x\in K$, one has that
	$$ h_\mu^{1}(f)\le \liminf_{n\to\infty}-\frac{1}{n}\log(\mu_{\xi(x)}(B(x,n,\rho)\cap K))+\alpha/10. $$
\end{enumerate}

We can choose a point $x_0\in K$ such that
\begin{itemize}	
	\item $x_0\in K$, $\mu_{\xi(x_0)}(K)>0$ and for any $\delta>0$ small enough, one has that $\mu_{\xi(x_0)}(K\cap W^u_\delta(x_0))>0$.	
\end{itemize}
%%%%%%%%%%%%%%%%%%%%%%%%%%%%%%%%

\section{Cover $1$-dimensional unstable curves}
Assume that $\mu$ is an ergodic measure with one dominated Lyapunov exponent.
Recall the constants chosen as in Section~\ref{Sec:const}, the compact set $K$ and the point $x_0\in K$ chosen as in Section~\ref{Sec:const}.
We take a reparametrization $\sigma:~[-1,1]\to W^{1}_{\rm loc}(x_0)$ such that $\sigma(0)=x_0$ and $\sigma$ is $\varepsilon_{g}$-bounded. 

\begin{Theorem}\label{Thm:cover}
Assume that $\mu(L^{u,1}(\beta))\le c$.
For any $n\in\NN$, there is a family of affine reparametrizations $\Gamma_n$ having the following properties:
\begin{enumerate}
\item $\limsup_{n\to\infty}\frac{1}{n}\log \#\Gamma_n\le \frac{1}{r} R(f)+\alpha/2$
%\item $\#\Gamma_n\le \#\Gamma_n$;
\item $\bigcup_{\gamma\in\Gamma_n}\sigma\circ\gamma([-1,1])\supset K\cap \sigma_*$;
\item $\|D(f^j\circ\sigma\circ\gamma)\|\le 1,~~\forall j=0,1,\cdots,n-1$.
\end{enumerate}
\end{Theorem}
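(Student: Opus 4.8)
The plan is to build $\Gamma_n$ inductively on $j = 0, 1, \dots, n-1$, applying the Yomdin–Burguet machinery of Section~3 one iterate of $g = f^q$ at a time (so ``$n$ steps'' really means $n$ applications of $f^q$; I will match the normalization below, but the essential counting is per-$f^q$-step). Start with $\Gamma_0 = \{\operatorname{id}\}$, noting $\sigma$ is $\varepsilon_g$-strongly bounded by hypothesis. At stage $j$, suppose $\gamma \in \Gamma_j$ is an affine reparametrization with $f^{jq}\circ\sigma\circ\gamma$ bounded (indeed $\varepsilon_g$-strongly bounded, using \eqref{e.reduce-epsilon-g} to keep the derivative bound from blowing up). First apply Lemma~\ref{Lem:reparametrization} to $g = f^q$ and the curve $f^{jq}\circ\sigma\circ\gamma$: for each pair of integers $(k,k')$ with $k = \lceil \log\|Df^q(x)\|\rceil$ and $k' = \lceil\log\|Df^q|_{T_x\sigma_*}\|\rceil$ along the curve, we get a family $\Theta_{k,k'}$ of affine reparametrizations covering the corresponding sublevel set, with $f^q\circ(f^{jq}\circ\sigma\circ\gamma)\circ\theta$ bounded and $\#\Theta_{k,k'} \le C_r e^{(k-k')/(r-1)}$. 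Then apply Lemma~\ref{Lem:more-cutting} to restore $\varepsilon_g$-strong boundedness at a multiplicative cost of at most $\|Df^q\|_{\sup}+2$ reparametrizations. Compose to obtain $\Gamma_{j+1}$.

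The key point is the \emph{entropy bound} on $\#\Gamma_n$, and this is where the constants of Section~\ref{Sec:const} and the set $K$ enter. Tracking the multiplicities: the pure Yomdin factor over $n$ steps contributes $\prod C_r e^{(k_j - k_j')/(r-1)}$, and after taking $\frac1{nq}\log$ and letting $n\to\infty$, the $C_r$ part is killed by \eqref{e.q-delete-Cr}, the $\|Df^q\|_{\sup}+2$ part from Lemma~\ref{Lem:more-cutting} is killed by \eqref{e.spectral} together with \eqref{e.q-delete-norm} (giving a contribution essentially $R(f)/r$ up to $\alpha$-errors). The term $\frac1{r-1}\sum(k_j - k_j')$ is the delicate one: $\sum k_j$ is controlled by $R(f)$ via the Birkhoff-type convergence built into $K$ (property on the top of the list and property 3 of Section~\ref{Sec:const}), contributing $\le R(f)/(r-1) \cdot q$ per step up to error; crucially $\sum k_j'$ is bounded \emph{below} along the part of the curve that stays in $K$ — since if the curve-direction derivative grew too much, the piece would become $2\beta_{\varepsilon_g}$-large by Lemma~\ref{Lem:size-lower-bounde}, hence land in $L^{u,1}(\beta)$, and \eqref{e.birkhoff-L-u} with $\mu(L^{u,1}(\beta)) \le c$ caps the time spent there. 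Combining, $\frac1{r-1}\sum(k_j-k_j')$ collapses to $\frac1r R(f)$ plus an $\alpha/2$-budget assembled from \eqref{q-delete-logq}, \eqref{e.q-delete-norm}, \eqref{e.q-delete-Cr}, and the choice of $c = \alpha_1/(4q)$ via \eqref{e.choose-alpha-1}–\eqref{e.choose-c-alpha1}.

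Property (2), the covering property, follows because at each stage every point $x = \sigma(t)$ with $x \in K$ (hence $f^{iq}(x) \in$ the relevant good sets) has its parameter $t$ lying in $\theta([-1,1])$ for some $\theta$ produced by Lemma~\ref{Lem:reparametrization} (clause 1) and then in some sub-reparametrization of Lemma~\ref{Lem:more-cutting} (whose images cover all of $[-1,1]$). So by induction $K \cap \sigma_* \subset \bigcup_{\gamma\in\Gamma_n} \sigma\circ\gamma([-1,1])$. Property (3), the derivative control $\|D(f^j\circ\sigma\circ\gamma)\| \le 1$ for $0 \le j < n$, is immediate from the invariant ``$\varepsilon_g$-strongly bounded'' property at each intermediate stage: between consecutive multiples of $q$ the curve $f^{jq}\circ\sigma\circ\gamma$ has derivative $\le \varepsilon_g$, and applying $f^s$ for $0 \le s < q$ multiplies the derivative by at most $\|Df\|_{\sup}^q$, which is $\le 1/\varepsilon_g$ by \eqref{e.reduce-epsilon-g}; hence the product is $\le 1$.

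The main obstacle I anticipate is the bookkeeping in the second paragraph: organizing the double sum $\sum_j (k_j - k_j')$ so that the positive part $\sum k_j$ is honestly bounded by $n q R(f)$-plus-small (this needs the uniform Birkhoff convergence on $K$ applied to $\log\|Df^q\|$, not just $\log\|Df\|$, hence \eqref{e.spectral}) while the subtracted part $\sum k_j'$ is handled not pointwise but through the ``escape into $L^{u,1}(\beta)$'' dichotomy — a piece of curve on which $\|Df^q|_{T\sigma_*}\|$ has been large for a definite fraction of the time must have grown long, contradiction with property 2 of $K$ unless that fraction is $< \mu(L^{u,1}(\beta)) + c \le 2c = \alpha_1/(2q)$. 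Quantifying ``long'' precisely via Lemma~\ref{Lem:size-lower-bounde} and \eqref{e.q-alpha-r}, and then feeding the resulting fraction bound into the Stirling estimate \eqref{e.choose-alpha-1}, is the technical heart of the argument.
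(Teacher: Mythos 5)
Your proposal correctly identifies the big pieces (Lemma~\ref{Lem:reparametrization} applied step by step with $g=f^q$, restoring $\varepsilon_g$-strong boundedness, and covering), but the crucial cost accounting is wrong, and the way you invoke the hypothesis $\mu(L^{u,1}(\beta))\le c$ is misplaced. You apply Lemma~\ref{Lem:more-cutting} at every one of the $m$ steps, which incurs a multiplicative factor of $(\|Df^q\|_{\sup}+2)^m$. After taking $\frac{1}{mq}\log$, this contributes $\frac{1}{q}\log(\|Df^q\|_{\sup}+2)\approx R(f)$, which alone exhausts (indeed exceeds) the entire budget $\frac{1}{r}R(f)+\alpha/2$. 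Neither \eqref{e.spectral} nor \eqref{e.q-delete-norm} kills this: \eqref{e.q-delete-norm} controls $\frac{1}{q}\log\bigl(\log\|Df\|_{\sup}+2\bigr)$ (a double log, used in the paper to count the admissible derivative tuples), not $\frac{1}{q}\log(\|Df^q\|_{\sup}+2)$.

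The paper's proof avoids this by invoking Lemma~\ref{Lem:more-cutting} \emph{only} at the $g$-iterates $j$ for which $g^j(x)\in L^{u,1}(\beta)$, and by Lemma~\ref{Lem:density} the density of such $j$ is at most $\alpha_1$, so the cost is $(\|Dg\|_{\sup}+2)^{\alpha_1 m+1}$, which is controlled by \eqref{e.choose-alpha-1}. At the other (non-visiting) iterates, the paper uses the dichotomy you mention, but in a different role: if $g^{j}\circ\sigma\circ\gamma\circ\theta$ is not already $\varepsilon_g$-strongly bounded, then it is long, and by Lemma~\ref{Lem:size-lower-bounde} any point of $K_E$ landing in its \emph{middle} would have a $\beta$-large unstable manifold, contradicting $j\notin E$; hence only the two end-pieces $\gamma_\theta^{\pm 1}$ can contain such points, giving a cheap factor of $2$ per step rather than $\|Dg\|_{\sup}+2$. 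This organization also requires stratifying $K\cap\sigma_*$ by the visit-type $E\subset[0,m)$ and by the admissible integer tuples $(k_0,k_0',\dots,k_{m-1},k_{m-1}')$, which is how $\#{\cal S}_m$ and the tuple count of Lemma~\ref{Lem:derivetive-type} enter; your sketch omits this stratification.

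Your description of the $\sum(k_j-k_j')$ bound is also off. You want to bound $\sum k_j'$ from below using the $L^{u,1}(\beta)$ escape, but the actual mechanism (Lemma~\ref{Lem:sum-of-k}) is Lyapunov-theoretic: the entropy hypothesis $h^1_\mu(f)>\alpha+R(f)/r$ plus the Ruelle inequality give $\lambda_1(\mu,f^q)>\frac{1}{r}\log\|Df^q\|_{\sup}+9q\alpha/10$, and property 3 of $K$ turns this into a pointwise lower bound on $\frac{1}{m}\sum k_j'(y)$, which yields $\sum k_j'>\frac{1}{r}\sum k_j$ and hence $\frac{1}{r-1}\sum(k_j-k_j')\le\frac{1}{r}\sum k_j$. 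That step has nothing to do with curve lengths or $L^{u,1}(\beta)$.

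Properties (2) and (3) in your sketch are fine in outline, and your use of \eqref{e.reduce-epsilon-g} to pass from multiples of $q$ to arbitrary $j<n$ matches the paper. But as written, the entropy estimate (1) would fail because of the extra $(\|Df^q\|_{\sup}+2)^m$ factor, and that is the technical heart of the theorem.
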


For the reparametrizations $\{\Gamma_n\}$ in Theorem~\ref{Thm:cover}, for a partition $\mathcal P$ whose diameter is less than $\rho$ as in Proposition~\ref{Pro:unstable-entropy-partition}, we define
$$D(n):=\sup_{\gamma\in \Gamma_n}\#\{P:~P\cap K\cap (\sigma\circ\gamma)_*\neq\emptyset, ~P\in{\cal P}^n\}.$$

Inspired by \cite[Page 1498]{Bur24P}, one has the following proposition.

\begin{Proposition}\label{Pro:sub-exponential-D}Assume  that $\mu(\partial{\cal P})=0$, then we have
$$\lim_{n\to\infty}\frac{1}{n}\log D(n)=0.$$

\end{Proposition}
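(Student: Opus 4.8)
The plan is to prove a uniform sub-exponential bound for a \emph{single} reparametrized curve and then take the supremum. Fix $n\in\NN$ and $\gamma\in\Gamma_n$, and write $c=\sigma\circ\gamma\colon[-1,1]\to M$. By Theorem~\ref{Thm:cover}(3) we have $\|D(f^{j}\circ c)\|_{\sup}\le 1$ for $0\le j<n$, so each iterate $f^{j}\circ c$ has length at most $2$. For $\delta>0$ write $(\partial\cP)_{\delta}$ for the $\delta$-neighbourhood of the (closed) set $\partial\cP$. Since $\mu(\partial\cP)=0$, continuity of measure gives $\mu((\partial\cP)_{\delta})\to0$ as $\delta\to0$; so, given $\eta>0$, first choose $\delta>0$ with $\mu$ of a fixed open neighbourhood of $(\partial\cP)_{2\delta}$ less than $\eta$, and then, using the first property in the choice of $K$ in Section~\ref{Sec:const} (the empirical measures of points of $K$ converge to $\mu$ weak-$*$, uniformly), choose $N$ so that
$$\#\{0\le j<m:\ f^{j}(y)\in (\partial\cP)_{2\delta}\}<\eta\, m\qquad\text{for all }m\ge N,\ y\in K.$$

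Next I would cut $[-1,1]$ into consecutive closed sub-intervals $I_{1},\dots,I_{M}$ along the level sets of the non-decreasing function $t\mapsto\sum_{j=0}^{n-1}\int_{-1}^{t}\|D(f^{j}\circ c)(s)\|\,\mathrm ds$, whose total increment is at most $2n$; taking the level spacing equal to $\delta$ yields $M\le 2n/\delta+1$ sub-intervals on each of which $\sum_{j=0}^{n-1}\big(\text{length of }f^{j}\circ c|_{I}\big)\le\delta$, hence each $f^{j}\circ c|_{I}$ has length, and therefore diameter, at most $\delta$. Discard the sub-intervals that do not meet $c^{-1}(K)$ (they contribute nothing to $D(n)$), and for each remaining $I$ fix $t_{I}\in I$ with $y_{I}:=c(t_{I})\in K$. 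Call $j<n$ \emph{bad for $I$} if $f^{j}(c(I))\cap(\partial\cP)_{\delta}\ne\emptyset$; since $f^{j}(c(I))$ has diameter $\le\delta$ and contains $f^{j}(y_{I})$, a bad $j$ forces $f^{j}(y_{I})\in(\partial\cP)_{2\delta}$, so for $n\ge N$ the number of indices bad for $I$ is $<\eta n$. If $j$ is not bad for $I$, then $f^{j}(c(I))$ is a connected set of diameter $\le\delta$ at distance $>\delta$ from $\partial\cP$, hence contained in a single atom of $\cP$, so the $j$-th coordinate of the $\cP^{n}$-itinerary is constant along $c(I)$; for a bad $j$ it takes at most $\#\cP$ values. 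Consequently $c(I)$ meets at most $(\#\cP)^{\eta n}$ atoms of $\cP^{n}$, and summing over the $\le 2n/\delta+1$ retained sub-intervals,
$$\#\{P\in\cP^{n}:\ P\cap K\cap(\sigma\circ\gamma)_{*}\ne\emptyset\}\ \le\ \Big(\tfrac{2n}{\delta}+1\Big)(\#\cP)^{\eta n}.$$
Taking the supremum over $\gamma\in\Gamma_{n}$ gives $\limsup_{n\to\infty}\tfrac1n\log D(n)\le\eta\log\#\cP$; since $\eta>0$ is arbitrary and $D(n)\ge1$, we conclude $\lim_{n\to\infty}\tfrac1n\log D(n)=0$.

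The one genuine obstacle is that a curve of bounded length can meet $\partial\cP$ along a set with arbitrarily many connected components, so there is no pointwise ``number of crossings'' bound; this is exactly the role of the hypothesis $\mu(\partial\cP)=0$, which must be combined with the uniform recurrence statistics built into $K$. The device above replaces crossings by a two-scale estimate: once every $f^{j}$-image of a piece has diameter $\le\delta$, branching of the itinerary on that piece can occur at index $j$ only when the \emph{single orbit point} $f^{j}(y_{I})$ of a point $y_I\in K$ lies in the $2\delta$-neighbourhood of $\partial\cP$, and the frequency of such $j$ is $<\eta n$ uniformly over $K$ by weak-$*$ convergence of empirical measures together with $\mu((\partial\cP)_{2\delta})$ being small. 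A routine point to check along the way is the convention ensuring that a connected set disjoint from $\partial\cP$ lies in a single atom, which holds for the partitions used here (one takes $\cP$ with $\mu(\partial\cP)=0$ and $M\setminus\partial\cP=\bigsqcup_{P\in\cP}\Int P$).
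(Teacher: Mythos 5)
Your argument is correct and is essentially the paper's proof: both rest on the same two ingredients, namely the bound $\|D(f^j\circ\sigma\circ\gamma)\|\le 1$ from Theorem~\ref{Thm:cover}(3) to make every iterated image of a small sub-piece have diameter below the scale $\delta$, and the uniform equidistribution of orbits of points of $K$ together with $\mu(\partial\mathcal P)=0$ to bound the frequency of indices at which the itinerary can branch, yielding the count $(\#\mathcal P)^{O(\eta n)}$ per piece. The only differences are cosmetic: you argue directly with a subdivision into $O(n/\delta)$ pieces (a harmless polynomial prefactor), whereas the paper argues by contradiction using a fixed family of about $1/\chi$ affine reparametrizations.
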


\begin{proof}
We will prove by contradiction. Assume the conclusion is not true. Then there is $\kappa>0$, such that
\begin{equation}\label{e.D-estimate}
\limsup_{n\to\infty}\frac{1}{n}\log D(n)>4\kappa\log\#{\cal P}.
\end{equation}
Since $\mu(\partial {\mathcal P})=0$, there is $\chi\in(0,1)$ small enough, such that
 \begin{equation}\label{e.boundary-kappa}
 \mu(B(\partial{\cal P},\chi))<\kappa.
 \end{equation}

Equation~\eqref{e.D-estimate} implies that there are sufficiently large $n$, and $\gamma\in\Gamma_n$ such that
\begin{equation}\label{e.cardinatily-partition}
\#\{P:~P\cap K\cap (\sigma\circ\gamma)_*\neq\emptyset, ~P\in{\cal P}^n\}>4([1/\chi]+2)(\#{\cal P})^{2n\kappa}.
\end{equation}
Equation~(\ref{e.boundary-kappa}) implies that for any point $x\in K$, one has that for $n$ large enough,
$$\#\{0\le j\le n-1:~f^j(x)\in B(\partial{\cal P},\chi)\}<n\kappa.$$
Now we take a family of reparametrizations $\Theta$ such that
\begin{itemize}
\item $\#\Theta<4([1/\chi]+2)$;
\item $\big|\theta'\big|<\chi$, for any $\theta\in\Theta$;
\item $\bigcup_{\theta\in\Theta}\theta([-1,1])=[-1,1]$.
\end{itemize}
Thus, there is $\theta\in\Theta$, such that
$$\#\{P:~P\cap K\cap (\sigma\circ\gamma\circ\theta)_*\neq\emptyset, ~P\in{\cal P}^n\}\ge (\#{\cal P})^{2n\kappa}.$$
On the other hand, for any $\theta\in\Theta$, one always has
\begin{align*}
&~~~~~~\#\{P:~P\cap K\cap (\sigma\circ\gamma\circ\theta)_*\neq\emptyset, ~P\in{\cal P}^n\}\\
&\le \prod_{j=0}^{n-1}\#\{P\in{\cal P}:~P\cap f^j(\sigma\circ\gamma\circ\theta([-1,1]))\cap f^j(K)\neq\emptyset\}
\end{align*}
Since $\|D(f^j\circ\sigma\circ\gamma)\|\le 1,~~\forall j=0,1,\cdots,n-1$, one has that $\|D(f^j\circ\sigma\circ\gamma\circ\theta)\|\le \chi/2,~~\forall j=0,1,\cdots,n-1$.
This implies that the diameter of $f^j\circ\sigma\circ\gamma\circ\theta([-1,1])$ is less than $\chi$. Thus,
\begin{itemize}
\item $\#\{P\in{\cal P}:~P\cap f^j(\sigma\circ\gamma\circ\theta([-1,1]))\cap f^j(K)\neq\emptyset\}>1$ if and only if $f^j\circ\sigma\circ\gamma\circ\theta([-1,1])\cap f^j(K)$ contained in $B(\partial{\cal P},\chi)$.
\end{itemize}
Thus, we have 
$$\prod_{j=0}^{n-1}\#\{P\in{\cal P}:~P\cap f^j(\sigma\circ\gamma\circ\theta([-1,1]))\cap f^j(K)\neq\emptyset\}\le \big(\#{\cal P}\big)^{\sup_{x\in K}\#\{j:~f^j(x)\in B(\partial{\cal P},\chi)\}}\le \big(\#{\cal P}\big)^{n\kappa}.$$
This contradicts to Equation~(\ref{e.cardinatily-partition}).

\end{proof}

\section{Proof of Theorem~\ref{Thm:Cr-long-unstable-high}}
For $\alpha>0$ as in the statement of Theorem~\ref{Thm:Cr-long-unstable-high}. We have chosen $\beta>0$ and $c>0$ as in Section~\ref{Sec:const}. Let $\mu$ be an ergodic measure as in the statement of Theorem~\ref{Thm:Cr-long-unstable-high}, i.e., 
$$h^1_{\mu}(f)>\alpha+\frac{R(f)}{r}.$$
We will prove Theorem~\ref{Thm:Cr-long-unstable-high} by absurd, i.e., we assume that $\mu(L^u(\beta))\le c$.

We choose the compact set $K$ and the constant $\rho$ as in Section~\ref{Sec:const}. Choose $x_0\in K$ and a reparametrization $\sigma:~[-1,1]\to W^{1}_{\rm loc}(x_0)$ is $\varepsilon_g$-bounded.
We choose a finite partition $\mathcal P$ such that ${\rm Diam}(\mathcal P)<\rho$ and $\mu(\partial{\mathcal P})=0$.

Recall the family of reparametrizations $\Gamma_n$ in Theorem~\ref{Thm:cover} and $D(n)$ in Proposition~\ref{Pro:sub-exponential-D}.
Note that
$$\#\{P:~P\cap K\cap\sigma_*\neq\emptyset,~P\in{\cal P}^n\}\le \#\Gamma_n\sup_{\gamma\in \Gamma_n}\{P:~P\cap K\cap (\sigma\circ\gamma)_*\neq\emptyset, ~P\in{\cal P}^n\}.$$
By Proposition~\ref{Pro:unstable-entropy-partition}, we have that
$$h_\mu^{1}(f)\le \liminf_{n\to\infty}\frac{1}{n}\log \#\Gamma_n + \lim_{n\to\infty}\frac{1}{n}\log D(n)<\frac{1}{r} R(f)+\alpha$$
by using Theorem~\ref{Thm:cover} and Proposition~\ref{Pro:sub-exponential-D}. Now we have a contradiction to the assumption of Theorem~\ref{Thm:Cr-long-unstable-high}.

\section{The proof of Theorem~\ref{Thm:cover}}

For $n$ in Theorem~\ref{Thm:cover}, we write $n=mq+\ell$, where $\ell\in[0,q)$. In this section, we always take $g=f^q$.
Recall the relationship of $c$ and $\alpha_1$ from Equation~\eqref{e.choose-c-alpha1}.
\begin{Lemma}\label{Lem:density}
For any $x\in K$, for $m$ large enough, one has that
$$\#\{0\le i\le m-1:~f^{qi}(x)\in L^{u,1}(\beta)\}\le\alpha_1 m.$$
\end{Lemma}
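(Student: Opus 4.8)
The plan is to deduce this from the Birkhoff-type estimate \eqref{e.birkhoff-L-u} that was built into the choice of $K$, combined with the relation $c=\alpha_1/(4q)$ from \eqref{e.choose-c-alpha1} and the standing assumption $\mu(L^{u,1}(\beta))\le c$. The key point is that counting visits of the subsequence $(f^{qi}(x))_{0\le i<m}$ to $L^{u,1}(\beta)$ is controlled by counting visits of the full orbit $(f^{j}(x))_{0\le j<n}$ to $L^{u,1}(\beta)$, since every index $j=qi$ with $f^{qi}(x)\in L^{u,1}(\beta)$ contributes one visit of the full orbit of length $n=mq+\ell$ (up to the negligible tail of length $\ell<q$).

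First I would fix $x\in K$ and take $n=mq+\ell$ with $\ell\in[0,q)$. By property (2) in Section~\ref{Sec:const}, i.e.\ \eqref{e.birkhoff-L-u}, for $n$ (hence $m$) large enough one has
\[
\#\{0\le j<n:~f^{j}(x)\in L^{u,1}(\beta)\}<n\big(\mu(L^{u,1}(\beta))+c\big)\le n\cdot 2c=2c(mq+\ell)\le 2c\,q(m+1).
\]
Since $\{0\le i<m:~f^{qi}(x)\in L^{u,1}(\beta)\}$ injects into $\{0\le j<n:~f^{j}(x)\in L^{u,1}(\beta)\}$ via $i\mapsto qi$, the left-hand count in the lemma is at most $2cq(m+1)=\tfrac{\alpha_1}{2}(m+1)$ using $c=\alpha_1/(4q)$. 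For $m$ large this is $\le\alpha_1 m$, which is the claimed bound.

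I do not expect any genuine obstacle here; the lemma is essentially a bookkeeping consequence of the already-arranged uniform Birkhoff estimate on $K$ and the arithmetic choice $c=\alpha_1/(4q)$. The only mild point of care is to make the ``$m$ large enough'' threshold uniform over $x\in K$, but that uniformity is exactly what property (2) in Section~\ref{Sec:const} provides (the $N$ there depends only on the constants, not on $x$), and absorbing the additive $+1$ and the $\ell<q$ tail into the large-$m$ regime costs nothing.
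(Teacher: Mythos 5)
Your proof is correct and follows essentially the same route as the paper's: apply the uniform Birkhoff estimate \eqref{e.birkhoff-L-u} together with the standing assumption $\mu(L^{u,1}(\beta))\le c$, observe that visits of the subsequence $(f^{qi}(x))_{i<m}$ inject into visits of the full orbit, and conclude via $c=\alpha_1/(4q)$. The only (harmless) difference is that you track the tail $\ell<q$ explicitly, whereas the paper simply applies the estimate with $n=mq$.
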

\begin{proof}
Since $\mu(L^{u,1}(\beta))< c$, one has that for $m$ large enough, for any $x\in K$, by Equation~\eqref{e.birkhoff-L-u}
$$\#\{0\le j\le mq-1:~f^{j}(x)\in L^{u,1}(\beta)\}\le cmq.$$
Clearly, $\#\{0\le i\le m-1:~f^{qi}(x)\in L^{u,1}(\beta)\}\le \#\{0\le j\le mq-1:~f^{j}(x)\in L^{u,1}(\beta)\}$. Thus, one can conclude by Equation~\eqref{e.choose-c-alpha1}.
\end{proof}

We define the type
$${\cal S}_m=\{E\subset [0,m):~d_m(E)<\alpha_1\}.$$
By the choice of $\alpha_1$ (Equation \eqref{e.choose-alpha-1}),  for $m$ large enough one has that 
$$\frac{1}{m}\log(\sum_{j=0}^{\lceil m\alpha_1 \rceil } \tbinom{m}{j})<\alpha/10,$$
Then, one has that
\begin{equation}\label{e.E-type}
\#{\cal S}_m\le {\rm e}^{\alpha n/10}.
\end{equation}

For any  $E\in {\cal S}_m$, we consider
$$K_E=\{x\in K\cap\sigma_*:~f^{jq}(x)\in L^{u,1}(\beta) ~{\textrm {iff}}~j\in E\}.$$
Given $2m$-integers $(k_0,k_0',k_1,k_1',\cdots,k_{m-1},k_{m-1}')$, we define a subset of $K_E$:
$$\Sigma(k_0,k_0',k_1,k_1',\cdots,k_{m-1},k_{m-1}')=\Sigma(E;k_0,k_0',k_1,k_1',\cdots,k_{m-1},k_{m-1}')$$
by the following way: $x\in \Sigma(k_0,k_0',k_1,k_1',\cdots,k_{m-1},k_{m-1}')$ if and only if $x\in K_E$ and
$$\forall 0\le j\le m-1,~~~\lceil\log\|Dg(g^j(x))\|\rceil=k_j,~\lceil\log\|Dg|_{T_{g^j(x)}{(g^j(\sigma))_*}}\|\rceil=k_j'$$
If $\Sigma(k_0,k_0',k_1,k_1',\cdots,k_{m-1},k_{m-1}')\neq\emptyset$, then the $2m$-tuple $(k_0,k_0',k_1,k_1',\cdots,k_{m-1},k_{m-1}')$ is said to be \emph{admissible}.

\smallskip

Meanwhile, for a point $y\in \sigma_*$, for any $j\ge 0$, we define functions:
$$k_j(y):=\lceil\log\|Dg(g^j(y))\|\rceil,~~~k_j'(y):=\lceil\log\|Dg|_{T_{g^j(y)}{(g^j(\sigma))_*}}\|\rceil.$$
\begin{Lemma}\label{Lem:derivetive-type}
The are at most
$$\big(q\cdot \max\{\log\|Df\|_{\rm sup}+2,~\log\|Df^{-1}\|_{\rm sup}+2\}\big)^{2m}$$
admissible $2m$-tuples.
\end{Lemma}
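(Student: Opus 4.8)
The plan is to bound the number of admissible $2m$-tuples $(k_0,k_0',\dots,k_{m-1},k_{m-1}')$ by bounding, for each index $j$, the number of possible values the pair $(k_j,k_j')$ can take along orbits of points $y\in\sigma_*$. Since each $k_j$ and each $k_j'$ is an integer (a ceiling), it suffices to show each lies in an interval of controlled length, and then multiply the counts over the $2m$ coordinates.

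First I would observe that for any $y\in\sigma_*$ and any $j$, the point $g^j(y)=f^{qj}(y)$ lies in $M$, so
$$\|Dg(g^j(y))\|=\|Df^q(f^{qj}(y))\|\in[\,\|Df^{-q}\|_{\rm sup}^{-1},\ \|Df^q\|_{\rm sup}\,],$$
using that $\|Df^q(z)\|\ge \|(Df^q(z))^{-1}\|^{-1}\ge \|Df^{-q}\|_{\rm sup}^{-1}$. Taking $\log$ and then $\lceil\cdot\rceil$, the integer $k_j(y)$ takes at most
$$\big\lceil\log\|Df^q\|_{\rm sup}\big\rceil-\big\lfloor-\log\|Df^{-q}\|_{\rm sup}\big\rfloor+1
\le \log\|Df^q\|_{\rm sup}+\log\|Df^{-q}\|_{\rm sup}+2$$
values; and since $\|Df^q\|_{\rm sup}\le\|Df\|_{\rm sup}^q$ and $\|Df^{-q}\|_{\rm sup}\le\|Df^{-1}\|_{\rm sup}^q$, this is at most
$$q\log\|Df\|_{\rm sup}+q\log\|Df^{-1}\|_{\rm sup}+2\le 2q\cdot\max\{\log\|Df\|_{\rm sup}+2,\ \log\|Df^{-1}\|_{\rm sup}+2\}.$$
The same argument applies verbatim to $k_j'(y)=\lceil\log\|Dg|_{T_{g^j(y)}(g^j(\sigma))_*}\|\rceil$, because $Dg$ restricted to the one-dimensional tangent line to the (iterated) curve is bounded above by $\|Df^q\|_{\rm sup}$ and below by $\|Df^{-q}\|_{\rm sup}^{-1}$ in exactly the same way — a nonzero tangent vector $v$ satisfies $\|Df^q v\|\ge\|Df^{-q}\|_{\rm sup}^{-1}\|v\|$.

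Then I would conclude: since there are $2m$ coordinates and each admits at most $q\cdot\max\{\log\|Df\|_{\rm sup}+2,\ \log\|Df^{-1}\|_{\rm sup}+2\}$ possible values (absorbing the factor $2$ into the crude bound, or keeping it — the statement as written uses this expression to the power $2m$, so one keeps the slightly wasteful constant), the total number of admissible tuples is at most
$$\big(q\cdot\max\{\log\|Df\|_{\rm sup}+2,\ \log\|Df^{-1}\|_{\rm sup}+2\}\big)^{2m}.$$
I do not expect any real obstacle here; the only point requiring a little care is the two-sided bound on the norm of a derivative (or its restriction to a line), i.e.\ remembering that $\|Df^q(z)\|$ is bounded \emph{below} away from $0$ uniformly in $z$ because $f$ is a diffeomorphism of a compact manifold, which is what makes the range of the integer $k_j$ finite and of the stated size. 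The passage from $q$-step norms to powers of one-step norms via submultiplicativity is routine.
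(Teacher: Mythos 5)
Your proposal is correct in substance and follows exactly the paper's argument: each of the $2m$ integer coordinates $k_j,k_j'$ ranges over an interval of integers whose length is controlled by $q$ and $\log\|Df^{\pm1}\|_{\rm sup}$ (using the two-sided bound $\|Df^{-q}\|_{\rm sup}^{-1}\le\|Dg(z)\|\le\|Df^q\|_{\rm sup}$, and likewise for the restriction to the tangent line), and one multiplies over the coordinates. The one caveat is that your per-coordinate count $q\log\|Df\|_{\rm sup}+q\log\|Df^{-1}\|_{\rm sup}+2$ cannot simply be ``absorbed'' into the single factor $q\cdot\max\{\log\|Df\|_{\rm sup}+2,\log\|Df^{-1}\|_{\rm sup}+2\}$ (it is only bounded by twice that quantity, so strictly you get the stated bound with an extra $2^{2m}$) --- but the paper's own one-line count is no sharper, and the exact base is immaterial since it enters the final estimate only through $\tfrac1q\log(\cdot)$, which is controlled by the choice of $q$.
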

\begin{proof}
For any $j$, $k_j$ or $k_j'$ has $\max\{\lceil\log\|Df^q\|_{\rm sup}\rceil,\lceil\log\|Df^{-q}\|_{\rm sup}\rceil\}+1$ possibilities. It is less than $q\cdot \max\{\log\|Df\|_{\rm sup}+2,~\log\|Df^{-1}\|_{\rm sup}+2\}$. Since we have $2m$ integers, one gets the estimates.
\end{proof}

\begin{Lemma}\label{Lem:sum-of-k}
For $n=mq$ large enough, for $y\in K$, one has that
$$\frac{1}{n}\frac{1}{r-1}\sum_{j=0}^{m-1}(k_j(y)-k_j'(y))\le \frac{1}{r}\frac{1}{q} \log \|Df^q\|_{\rm sup}\le \frac{1}{r}R(f)+\alpha/10$$
\end{Lemma}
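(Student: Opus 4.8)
The plan is to treat the two inequalities in the statement separately, the second one being essentially a restatement of the choice of $q$. For the second inequality, \eqref{e.spectral} gives $\tfrac1q\log\|Df^q\|_{\rm sup}<R(f)+\alpha/10$, and dividing by $r\ge 1$ yields $\tfrac1{rq}\log\|Df^q\|_{\rm sup}<\tfrac1rR(f)+\tfrac{\alpha}{10r}\le\tfrac1rR(f)+\alpha/10$. For the first inequality, since $n=mq$ it is equivalent to
$$\sum_{j=0}^{m-1}\big(k_j(y)-k_j'(y)\big)\ \le\ \frac{r-1}{r}\,m\log\|Df^q\|_{\rm sup},$$
which I would obtain by bounding the two sums on the left separately.

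For $\sum_j k_j(y)$ I would just use the trivial estimate $k_j(y)=\lceil\log\|Df^q(f^{jq}(y))\|\rceil\le\log\|Df^q\|_{\rm sup}+1$, so $\sum_{j=0}^{m-1}k_j(y)\le m\log\|Df^q\|_{\rm sup}+m$. For $\sum_j k_j'(y)$ I would first note that, since $\sigma_*\subset W^1_{\rm loc}(x_0)$ is an arc of the one-dimensional strong unstable manifold, at every Lyapunov-regular point $z$ of $W^1(w)$ one has $T_zW^1(w)=E^{u,1}(z)$: a nonzero tangent vector to $W^1(w)$ has negative backward Lyapunov exponent (at most $-\lambda_1(\mu,f)$), hence cannot lie in the contracting direction $E^2(z)$, and a dimension count finishes it. Since we may assume $K$ lies in the (forward-invariant) Oseledec-regular set, applying this with $w=f^{jq}(x_0)$, $z=f^{jq}(y)$, together with $(f^{jq}\sigma)_*\subset W^1(f^{jq}(x_0))$, gives $k_j'(y)=\lceil\log\|Df^q|_{E^{u,1}(f^{jq}(y))}\|\rceil\ge\log\|Df^q|_{E^{u,1}(f^{jq}(y))}\|$. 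Summing and invoking condition~(3) in the choice of $K$ in Section~\ref{Sec:const} (which holds for all $y\in K$ once $m$ is large) gives $\sum_{j=0}^{m-1}k_j'(y)\ge m\big(\lambda_1(\mu,f^q)-2q\alpha/5\big)=mq\big(\lambda_1(\mu,f)-2\alpha/5\big)$.

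Subtracting the two estimates, the claim reduces to the numerical inequality $1+2q\alpha/5\le q\lambda_1(\mu,f)-\tfrac1r\log\|Df^q\|_{\rm sup}$. Here I would invoke the Ledrappier--Young (Ruelle-type) inequality for the one-dimensional unstable foliation, $\lambda_1(\mu,f)\ge h^1_\mu(f)$ (see \cite{LeY85}), together with the standing hypothesis $h^1_\mu(f)>\alpha+R(f)/r$ and again \eqref{e.spectral}, to get
$$q\lambda_1(\mu,f)-\tfrac1r\log\|Df^q\|_{\rm sup}\ >\ q\Big(\alpha+\tfrac{R(f)}{r}\Big)-\tfrac{q}{r}\Big(R(f)+\tfrac{\alpha}{10}\Big)\ =\ q\alpha\Big(1-\tfrac{1}{10r}\Big);$$
then $1+2q\alpha/5\le q\alpha(1-\tfrac1{10r})$ holds once $q$ is large in terms of $\alpha$ and $r$ (for $r\ge 2$ it holds whenever $q\alpha\ge 2$), which we are free to require when choosing $q$ in Section~\ref{Sec:const}.

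The main obstacle, I expect, is exactly this last numerical step: the additive loss $+m$ coming from the ceilings and the error $2q\alpha/5$ coming from the Birkhoff-type estimate (condition~(3) of $K$) must both be absorbed into the gap between $q\lambda_1(\mu,f)$ and $\tfrac1r\log\|Df^q\|_{\rm sup}$, which is precisely what forces the spectral-radius condition \eqref{e.spectral} and the entropy hypothesis $h^1_\mu(f)>\alpha+R(f)/r$ to enter at this point, and what makes it necessary to take $q$ sufficiently large. A more routine but still essential point is the identification, at every Lyapunov-regular point $f^{jq}(y)$, of the tangent line of the iterate $(f^{jq}\sigma)_*$ with the top Oseledec space $E^{u,1}(f^{jq}(y))$: one must check that iterating a curve sitting inside the strong unstable manifold keeps its tangent direction exactly in $E^{u,1}$.
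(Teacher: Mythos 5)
Your proof is correct and follows essentially the same route as the paper's: both rest on the partial-entropy Ruelle inequality $\lambda_1(\mu,f^q)\ge h^1_\mu(f^q)=q\,h^1_\mu(f)$, the Birkhoff-type condition (3) in the choice of $K$, the identification $T_{f^{jq}(y)}(f^{jq}\sigma)_*=E^{u,1}(f^{jq}(y))$, and Equation~\eqref{e.spectral}; the paper merely organizes the algebra by first establishing $\sum_j k_j'>\tfrac1r\sum_j k_j$ and then bounding $\sum_j(k_j-k_j')\le\tfrac{r-1}{r}\sum_j k_j$. Your explicit tracking of the ceiling losses, which leads you to require $q\alpha\gtrsim 2$ rather than the condition $q\alpha>\tfrac{1}{2r}$ of Equation~\eqref{e.q-alpha-r}, is if anything more careful, since the paper's own step asserting $m(q\alpha/2-1/r)>0$ also tacitly needs $q\alpha>2/r$.
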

\begin{proof}
By our assumption:
$$h^1_{\mu}(f)>\alpha+\frac{R(f)}{r},$$
Thus, by Equation~\eqref{e.spectral},
$$h^1_\mu(f^q)=q h^1_\mu(f)\ge q(\frac{R(f)}{r}+\alpha)\ge q\left(\frac{1}{r}(\frac{\log \|Df^q\|_{\rm sup}}{q}-\frac{\alpha}{10})+\alpha\right)\ge \frac{\log\|Df^q\|_{\rm sup}}{r}+9q\alpha/10.$$
By the Ruelle inequality \cite{Rue78} (see \cite[Section 10.1]{LeY85} for the version of partial entropy), one has that $\lambda_1(\mu,f^q)\ge h^1_\mu(f^q)>\frac{1}{r}\log \|Df^q\|_{\rm sup}+9q\alpha/10$.
Thus, for $m$ large enough, for any $y\in K$,
\begin{align*}
&\frac{1}{m}\sum_{j=0}^{m-1}\log\|Df^q|_{E^{u,1}(f^{jq}(y))}\|-\frac{1}{r}\frac{1}{m}\sum_{j=0}^{m-1}\log \|Df^q(f^{jq}(y))\|\\
&\ge \lambda_1(\mu,f^q)-2q\alpha/5-\frac{1}{r}\log \|Df^q\|_{\rm sup}>q\cdot\alpha/2.
\end{align*}
Together with Equation~\eqref{e.q-alpha-r}, this implies that 
$$\sum_{j=0}^{m-1}k_j'(y)-\frac{1}{r}\sum_{j=0}^{m-1}k_j(y)>m(q\cdot\alpha/2-1/r)>0,$$
in other words,
$$\sum_{j=0}^{m-1}k_j'(y)> \frac{1}{r}\sum_{j=0}^{m-1}k_j(y).$$
Thus, one has
$$\frac{1}{r-1}\sum_{j=0}^{m-1}(k_j(y)-k_j'(y))\le\frac{1}{r-1}\frac{r-1}{r}\sum_{j=0}^{m-1}k_j(y)\le \frac{1}{r}m \log \|Df^q\|_{\rm sup}.$$
By dividing $n=mq$, by Equation~\eqref{e.spectral}, one has that
$$\frac{1}{n}\frac{1}{r-1}\sum_{j=0}^{m-1}(k_j(y)-k_j'(y))\le \frac{1}{r}\frac{1}{q} \log \|Df^q\|_{\rm sup}\le \frac{1}{r}R(f)+\alpha/10.$$
This completes the proof of the Lemma.
\end{proof}

We will prove the following proposition.

\begin{Proposition}\label{Pro:one-type-covering}
For the set $\Sigma(k_0,k_0',k_1,k_1',\cdots,k_{j-1},k_{j-1}'),~1\le j\le m$, there is a family $\Gamma_j'=\Gamma_j'(\Sigma(k_0,k_0',k_1,k_1',\cdots,k_{j-1},k_{j-1}'))$ of reparametrizations satisfying
$$\#\Gamma_j'\le 2^j C_r^j\exp\{\sum_{i=0}^{j-1}\frac{k_i-k_i'}{r-1}\}(\|Dg\|_{\rm sup}+2)^{\#\{0< i\le j,~i\in E\}},$$
and the following properties:
\begin{enumerate}
\item $\Sigma(k_0,k_0',k_1,k_1',\cdots,k_{j-1},k_{j-1}')\subset\bigcup_{\gamma\in\Gamma_j'}\sigma\circ\gamma([-1,1])$
\item for any $\gamma\in\Gamma_j'$, $g^i\circ\sigma\circ\gamma$ is $\varepsilon_g$-bounded for any $1\le i\le j$.
\end{enumerate}
\end{Proposition}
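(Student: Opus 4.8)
The plan is to prove Proposition~\ref{Pro:one-type-covering} by induction on $j$, combining Burguet's reparametrization lemma (Lemma~\ref{Lem:reparametrization}) with the extra cutting lemma (Lemma~\ref{Lem:more-cutting}) at those indices $i$ where the orbit visits $L^{u,1}(\beta)$. First I would set up the base case $j=1$. Starting from the curve $\sigma$, which is $\varepsilon_g$-bounded (hence in particular $\varepsilon_g$-strongly bounded by the normalization~\eqref{e.reduce-epsilon-g}), I would apply Lemma~\ref{Lem:reparametrization} to $g=f^q$ with the pair $(k_0,k_0')$, obtaining a family $\Theta_0$ of affine reparametrizations with $\#\Theta_0\le C_r e^{(k_0-k_0')/(r-1)}$ covering the set of parameters where $\lceil\log\|Dg\|\rceil=k_0$ and $\lceil\log\|Dg|_{T\sigma_*}\|\rceil=k_0'$, and such that $g\circ\sigma\circ\theta$ is bounded for each $\theta\in\Theta_0$. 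Since $\Sigma(k_0,k_0')\subset\{t:k_0(\sigma(t))=k_0,\ k_0'(\sigma(t))=k_0'\}$, this family covers $\Sigma(k_0,k_0')$. If $1\notin E$, I set $\Gamma_1'=\Theta_0$ after the harmless observation that $g\circ\sigma\circ\theta$ being bounded gives what we need once we further apply Lemma~\ref{Lem:more-cutting}; actually it is cleanest to \emph{always} post-compose with Lemma~\ref{Lem:more-cutting} to pass from ``$g\circ\sigma\circ\theta$ bounded'' to ``$g\circ\sigma\circ\theta\circ\tau$ is $\varepsilon_g$-bounded'', which costs a factor $\le\|Dg\|_{\sup}+2$. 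To keep the stated bound I would only pay this factor when $1\in E$; when $1\notin E$, I claim the curve $g\circ\sigma\circ\theta$ is automatically $\varepsilon_g$-bounded because the parameter domain $\Sigma$ avoids $L^{u,1}(\beta)$ at step $1$, so by Lemma~\ref{Lem:size-lower-bounde} (contrapositive) the derivative of $\sigma$ at the relevant parameters is small — more precisely $\|D\sigma(t)\|<\varepsilon_g$ there, which together with the definition of $1/L$-curve and the choice of $\beta$ forces $g\circ\sigma\circ\theta$ to stay $\varepsilon_g$-bounded without further cutting. This is the delicate point and I return to it below.

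For the inductive step, assume $\Gamma_j'$ has been constructed with the stated cardinality bound and with $g^i\circ\sigma\circ\gamma$ being $\varepsilon_g$-bounded for all $1\le i\le j$ and all $\gamma\in\Gamma_j'$. Fix $\gamma\in\Gamma_j'$ and set $\tilde\sigma=g^j\circ\sigma\circ\gamma$, which by the inductive hypothesis is $\varepsilon_g$-bounded. Apply Lemma~\ref{Lem:reparametrization} to $g$, to the curve $\tilde\sigma$, with the pair $(k_j,k_j')$: this produces a family $\Theta_\gamma$ with $\#\Theta_\gamma\le C_r e^{(k_j-k_j')/(r-1)}$ covering the relevant parameters and with $g\circ\tilde\sigma\circ\theta=g^{j+1}\circ\sigma\circ\gamma\circ\theta$ bounded for each $\theta$. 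If $j+1\notin E$, I argue as in the base case that $g^{j+1}\circ\sigma\circ\gamma\circ\theta$ is in fact $\varepsilon_g$-bounded, so I set the contribution of $\gamma$ to $\Gamma_{j+1}'$ to be $\{\gamma\circ\theta:\theta\in\Theta_\gamma\}$. If $j+1\in E$, then the curve $g^{j+1}\circ\sigma\circ\gamma\circ\theta$ is only known to be bounded, so I apply Lemma~\ref{Lem:more-cutting} to it, obtaining a further family of $\le\|Dg\|_{\sup}+2$ affine contracting reparametrizations $\tau$ making $g^{j+1}\circ\sigma\circ\gamma\circ\theta\circ\tau$ $\varepsilon_g$-bounded, and I put all the $\gamma\circ\theta\circ\tau$ into $\Gamma_{j+1}'$. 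Taking the union over $\gamma\in\Gamma_j'$, the covering property~(1) is preserved because $\Theta_\gamma$ covers the parameters with the prescribed derivative data at level $j$ and affine reparametrizations compose to affine reparametrizations; property~(2) holds for $i\le j$ by the inductive hypothesis (affine reparametrizations are contracting, so boundedness/strong-boundedness of $g^i\circ\sigma\circ\gamma$ is inherited by $g^i\circ\sigma\circ\gamma\circ\theta\circ\tau$) and for $i=j+1$ by construction. The cardinality multiplies by $C_r e^{(k_j-k_j')/(r-1)}$ always, by an extra factor $2$ to absorb the bookkeeping (matching the $2^j$ in the statement), and by an extra factor $\|Dg\|_{\sup}+2$ exactly when $j+1\in E$, which reproduces the claimed bound $2^{j+1}C_r^{j+1}\exp\{\sum_{i=0}^{j}(k_i-k_i')/(r-1)\}(\|Dg\|_{\sup}+2)^{\#\{0<i\le j+1:i\in E\}}$.

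The main obstacle, and the step I would spend the most care on, is the dichotomy at indices $i\notin E$: showing that when the orbit segment does \emph{not} meet $L^{u,1}(\beta)$ at step $i$, one gets $\varepsilon_g$-boundedness of the pushed-forward curve \emph{for free}, with no extra cutting. The mechanism is the contrapositive of Lemma~\ref{Lem:size-lower-bounde}: if the curve $g^i\circ\sigma\circ\gamma$ had some tangent vector of norm $\ge\varepsilon_g$, it would be $2\beta_{\varepsilon_g}$-large, hence (after the graph representation of Lemma~\ref{Lem:curve-graph}, using $\beta<\beta_{\varepsilon_g}$ and the choice $\max_{1\le j\le q}\|Df^j\|_{\sup}\cdot\beta\le\varepsilon_{f^q}$) it would exhibit a $\beta$-large essential $1/3$-graph inside $W^1(\cdot)$ — i.e., the relevant base point would lie in $L^{u,1}(\beta)$, contradicting $i\notin E$. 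One has to be careful that "essentially $1/3$-graph" translates correctly into membership in $L^{u,1}(\beta)$ via the exponential chart, that the Bowen-ball constraints ($g^s\circ\sigma\circ\gamma$ of diameter $<\rho(M)$ for $s\le i$, guaranteed by $\varepsilon_g$-boundedness and~\eqref{e.reduce-epsilon-g}) are in force, and that the direction of the graph is along $E^{u,1}$ — this uses that $\sigma_*\subset W^1_{\rm loc}(x_0)$ and that the $W^1$-foliation is $Df^q$-invariant, so the pushed curve stays tangent to $E^{u,1}$. Modulo these checks, which are routine given the constants fixed in Section~\ref{Sec:const}, the induction closes and the proposition follows.
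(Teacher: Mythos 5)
Your overall skeleton (induction on $j$, Lemma~\ref{Lem:reparametrization} at each step, Lemma~\ref{Lem:more-cutting} at the indices in $E$) matches the paper, but the way you handle the indices $i\notin E$ contains a genuine gap, and it is precisely the step you flag as delicate. You claim that when $i\notin E$ the curve $g^{i}\circ\sigma\circ\gamma\circ\theta$ is \emph{automatically} $\varepsilon_g$-strongly bounded, arguing by contrapositive of Lemma~\ref{Lem:size-lower-bounde} that otherwise it would be $2\beta_{\varepsilon_g}$-large and hence ``the relevant base point would lie in $L^{u,1}(\beta)$.'' This does not follow. The condition $i\notin E$ only controls the points $g^{i}(x)$ for $x\in\Sigma(\cdots)\subset K_E$; it says nothing about the other points of the image curve. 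A $2\beta_{\varepsilon_g}$-large curve forces membership in $L^{u,1}(\beta)$ only for points sitting well inside it (those with at least $\beta$ of graph on each side); points of $g^{i}(\Sigma)$ sitting near the two ends of the curve are not forced into $L^{u,1}(\beta)$. So the image curve can perfectly well fail to be $\varepsilon_g$-bounded while every point of $g^{i}(\Sigma)$ on it avoids $L^{u,1}(\beta)$ --- namely when those points cluster at the extremities --- and your contradiction evaporates.

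The paper's actual mechanism at $i\notin E$ is the converse of your reading: when $g^{i}\circ\sigma\circ\gamma\circ\theta$ is \emph{not} $\varepsilon_g$-bounded, it is long, and one covers the relevant points of $\Sigma$ by the \emph{two endpoint sub-curves} $\theta\circ\gamma_\theta^{-1}$ and $\theta\circ\gamma_\theta^{+1}$, each chosen so that its image has length $\beta_{\varepsilon_g}$. The key claim is that any $x\in\Sigma$ with $g^{i}(x)$ outside both end pieces would have curve of length $\ge\beta_{\varepsilon_g}\gg\beta$ on both sides of $g^{i}(x)$ inside $W^{u}(g^i(x))$, hence $g^{i}(x)\in L^{u,1}(\beta)$, contradicting $i\notin E$; and each end piece, having length only $\beta_{\varepsilon_g}$, is $\varepsilon_g$-strongly bounded by Lemma~\ref{Lem:size-lower-bounde}. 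This two-piece surgery is exactly what produces the factor $2^{j}$ in the cardinality bound --- it is not, as you suggest, a slack factor ``to absorb the bookkeeping.'' Without this step your induction does not close: you have no way to guarantee $\varepsilon_g$-strong boundedness at the indices outside $E$ while keeping the covering property, and $\varepsilon_g$-strong boundedness of the input curve is required to apply Lemma~\ref{Lem:reparametrization} at the next stage.
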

\begin{proof}
We will prove this proposition by induction. We first consider the case $j=1$. In this case, we consider the set $\Sigma(k_0,k_0')$. By applying the reparametrization Lemma (Lemma~\ref{Lem:reparametrization}), there is a family $\Theta$ of affine reparametrizations, whose cardinality satisfying
$$\#\Theta\le C_r{\rm e}^{\frac{k_0-k_0'}{r-1}},$$
such that
\begin{enumerate}
\item $\Sigma(k_0,k_0')\subset \bigcup_{\theta\in\Theta}\sigma\circ \theta([-1,1])$
\item $g\circ \sigma\circ\theta$ is bounded for any $\theta\in\Theta$;
\end{enumerate}
Note that $g\circ\sigma\circ\theta$ may not be $\varepsilon_g$-bounded. However, one notices that $\sigma\circ\theta$ is $\varepsilon_g$-bounded.

We have two cases
\begin{itemize}
\item $1\in E$: this means $g(x)=f^q(x)\in L^{u,1}(\beta)$ for all $x\in \Sigma(k_0,k_0')$.
\item $1\notin E$: this means $g(x)=f^q(x)\notin L^{u,1}(\beta)$ for all $x\in \Sigma(k_0,k_0')$.
\end{itemize}

For the case $1\in E$, one just applies Lemma~\ref{Lem:more-cutting} to have a new family $\Theta_g$ of affine reparametrizations such that $g\circ\sigma\circ\theta\circ\theta_g$ is $\varepsilon_g$-bounded. Define
$$\Gamma_1':=\{\gamma=\theta\circ\theta_g:~\theta\in\Theta,~\theta_g\in\Theta_g\}.$$
It is clear that
$$\#\Gamma_1'\le \#\Theta\cdot\#\Theta_g\le C_r{\rm e}^{\frac{k-k'}{r-1}}(\|Dg\|_{\rm sup}+2).$$

For the case $1\notin E$, for any $\theta\in \Theta$, there are two cases. Either $g\circ\sigma\circ\theta$ is $\varepsilon_g$-bounded, or $g\circ\sigma\circ\theta$ is not  $\varepsilon_g$-bounded.

\smallskip

If  $g\circ\sigma\circ\theta$ is not $\varepsilon_g$-bounded, then by Lemma~\ref{Lem:size-lower-bounde}, the length of $g\circ\sigma\circ\theta$ is larger than $2\beta_{\varepsilon_g}$ One defines two affine reparametrizations $\gamma_\theta^{-1}$ and $\gamma_\theta^{1}$ such that
\begin{itemize}
\item $\gamma_\theta^{-1}(-1)=-1$ and the length of $g\circ\sigma\circ\theta\circ\gamma_\theta^{-1}$ is $\beta_{\varepsilon_g}$.
\item $\gamma_\theta^{1}(1)=1$ and the length of $g\circ\sigma\circ\theta\circ\gamma_\theta^{1}$ is $\beta_{\varepsilon_g}$.
\end{itemize}
\begin{Claim}
For any $x\in \Sigma(k_0,k_0')$ and $g(x)\in g\circ\sigma\circ\theta([-1,1])$, one has 
$$x\in \sigma\circ \theta\circ \gamma_\theta^{-1}([-1,1])\cup \sigma\circ \theta\circ \gamma_\theta^{1}([-1,1]).$$
\end{Claim}
\begin{proof}[Proof of the claim]
Otherwise, one will have $g(x)\notin g\circ\sigma\circ \theta\circ \gamma_\theta^{-1}([-1,1])\cup g\circ\sigma\circ \theta\circ \gamma_\theta^{1}([-1,1])$. But by Lemma~\ref{Lem:size-lower-bounde}, the lengths of $g\circ\sigma\circ \theta\circ \gamma_\theta^{-1}([-1,1])$ and $g\circ\sigma\circ \theta\circ \gamma_\theta^{1}([-1,1])$ are both much larger than $\beta$ since $\beta$ can be chosen to be much smaller than $\beta_{\varepsilon_g}$. Note also the image of $g\circ\sigma\circ\theta$ is contained in the unstable manifold of $W^u(g(x))$. This implies that $g(x)\in L^{u,1}(\beta)$. Thus, we get a contradiction.
\end{proof}

Thus, we have the decomposition $\Theta=\Theta_L\cup \Theta_S$, where $\theta\in\Theta_L$ if and only if $g\circ\sigma\circ\theta$ is not $\varepsilon_g$-bounded. Define
$$\Theta_L^{\pm 1}=\{\theta\circ\gamma_\theta^{\pm 1}:~\theta\in\Theta_L\},$$
and
$$\Gamma_1'=\Theta_L^1\cup\Theta_L^{-1}\cup\Theta_S.$$
Thus for any $\gamma\in\Gamma_1'$, $g\circ\sigma\circ\gamma$ is $\varepsilon_g$-bounded. Indeed, we only have to consider the case of $\Theta_L^{\pm 1}$. Since the length is less than $2\beta_{\varepsilon_g}$, one knows the $\varepsilon_g$-bounded property by Lemma~\ref{Lem:size-lower-bounde}. From the Claim above, it is clear that
$$\Sigma(k_0,k_0')\subset\bigcup_{\gamma\in\Gamma_1'}\sigma\circ\gamma([-1,1]).$$
The cardinality of $\Gamma_1'$ can be estimated above by the case $1\in E$ or not. Thus, one gets the case of $j=1$.

\smallskip

Assume the case of $j$ is proved. This means one has a family $\Gamma_j'$ of reparametrizations such that the conclusion holds for $j$.  Now we prove the case of $j+1$. In fact it is close to the case $j=1$. For completeness, we give the proof. 

We consider the set $\Sigma(k_0,k_0',k_1,k_1',\cdots,k_{j-1},k_{j-1}',k_j,k_j')$. Take $\gamma'\in\Gamma_j'$. We know that $g^i\circ\sigma\circ\gamma'$ is $\varepsilon_g$-bounded for any $1\le i\le j$.

By applying the reparametrization Lemma (Lemma~\ref{Lem:reparametrization}), there is a family $\Theta_{\gamma'}$ of affine reparametrizations, whose cardinality satisfying
$$\#\Theta_{\gamma'}\le C_r{\rm e}^{\frac{k_j-k_j'}{r-1}},$$
such that
\begin{enumerate}
\item $\Sigma(k_0,k_0',k_1,k_1',\cdots,k_{j-1},k_{j-1}',k_j,k_j')\cap \sigma\circ\gamma'([-1,1])\subset \bigcup_{\theta\in\Theta_{\gamma'}}\sigma\circ\gamma'\circ \theta([-1,1])$
\item $g^{j+1}\circ \sigma\circ\gamma'\circ\theta$ is bounded;
\end{enumerate}
Note that $g^{j+1}\circ\sigma\circ\gamma'\circ\theta$ may not be $\varepsilon_g$-bounded. However, one notices that $g^j\circ\sigma\circ\gamma'\circ\theta$ is $\varepsilon_g$-bounded.
We have two cases
\begin{itemize}
\item $j+1\in E$: this means for all $x\in \Sigma(k_0,k_0',k_1,k_1',\cdots,k_{j-1},k_{j-1}',k_j,k_j')$, $g^{j+1}(x)=f^{(j+1)q}(x)\in L^{u,1}(\beta)$.
\item $j+1\notin E$: this means  for all $x\in \Sigma(k_0,k_0',k_1,k_1',\cdots,k_{j-1},k_{j-1}',k_j,k_j')$, $g^{j+1}(x)=f^{(j+1)q}(x)\notin L^{u,1}(\beta)$.
\end{itemize}

For the case $j+1\in E$, one just applies Lemma~\ref{Lem:more-cutting} to have a new family $\Theta_g$ of affine reparametrizations such that $g^j\circ\sigma\circ\gamma'\circ\theta\circ\theta_g$ is $\varepsilon_g$-bounded. Define
$$\Gamma_{j+1}':=\{\gamma=\gamma'\circ\theta\circ\theta_g:~\gamma'\in\Gamma_j',~\theta\in\Theta,~\theta_g\in\Theta_g\}.$$
It is clear that
\begin{align*}
\#\Gamma_{j+1}'&\le \#\Gamma_j'\cdot\#\Theta\cdot\#\Theta_g\le 2^j C_r^j\exp\{\sum_{i=0}^{j-1}\frac{k_i-k_i'}{r-1}\}(\|Dg\|_{\rm sup}+2)^{\#\{0< i\le j,~i\in E\}}\cdot C_r{\rm e}^{\frac{k_j-k_j'}{r-1}}(\|Dg\|_{\rm sup}+2)\\
&\le 2^{j+1} C_r^{j+1}\exp\{\sum_{i=0}^{j}\frac{k_i-k_i'}{r-1}\}(\|Dg\|_{\rm sup}+2)^{\#\{0< i\le j+1,~i\in E\}}
\end{align*}

For the case $j+1\notin E$, for any $\theta\in \Theta$, there are two cases. Either $g^{j+1}\circ\sigma\circ\gamma'\circ\theta$ is $\varepsilon_g$-bounded, or $g^{j+1}\circ\sigma\circ\gamma'\circ\theta$ is not $\varepsilon_g$-bounded.

\smallskip

If  $g^{j+1}\circ\sigma\circ\gamma'\circ\theta$ is not $\varepsilon_g$-bounded, one defines two affine reparametrizations $\gamma_\theta^{-1}$ and $\gamma_\theta^{1}$ such that
\begin{itemize}
\item $\gamma_\theta^{-1}(-1)=-1$ and the length $g^{j+1}\circ\sigma\circ\gamma'\circ\theta\circ\gamma_\theta^{-1}$ is $\beta_{\varepsilon_g}$.
\item $\gamma_\theta^{1}(1)=1$ and the length $g^{j+1}\circ\sigma\circ\gamma'\circ\theta\circ\gamma_\theta^{-1}$ is $\beta_{\varepsilon_g}$.
\end{itemize}
\begin{Claim}
For any $x\in \Sigma(k_0,k_0',k_1,k_1',\cdots,k_{j-1},k_{j-1}',k_j,k_j')$ and $g^{j+1}(x)\in g^{j+1}\circ\sigma\circ\gamma'\circ\theta([-1,1])$, one has 
$$x\in \sigma\circ\gamma'\circ\theta\circ \gamma_\theta^{-1}([-1,1])\cup  \sigma\circ\gamma'\circ\theta\circ \gamma_\theta^{1}([-1,1]).$$
\end{Claim}
\begin{proof}[Proof of the claim]The proof is similar to the claim as the case $j=1$.
Otherwise, one will have $g^{j+1}(x)\notin g^{j+1}\circ\sigma\circ \theta\circ \gamma_\theta^{-1}([-1,1])\cup g^{j+1}\circ\sigma\circ \theta\circ \gamma_\theta^{1}([-1,1])$. But by Lemma~\ref{Lem:size-lower-bounde}, the lengths of $g^{j+1}\circ\sigma\circ \theta\circ \gamma_\theta^{-1}([-1,1])$ and $g^{j+1}\circ\sigma\circ \theta\circ \gamma_\theta^{1}([-1,1])$ are both much larger than $\beta$ since $\beta$ can be chosen to be much smaller than $\beta_{\varepsilon_g}$. Note also the image of $g^{j+1}\circ\sigma\circ\gamma'\circ\theta$ is contained in the unstable manifold of $W^u(g^{j+1}(x))$ since the image $\sigma\circ\gamma'\circ\theta$ is contained in the unstable manifold of $x$. This implies that $g^{j+1}(x)\in L^{u,1}(\beta)$. Thus, we get a contradiction.
%
%
%
%
%
%
%
%But by Lemma~\ref{Lem:size-lower-bounde}, the lengths of $g^{j+1}\circ\sigma\circ \theta\circ \gamma_\theta^{-1}([-1,1])$ and $g^{j+1}\circ\sigma\circ \theta\circ \gamma_\theta^{1}([-1,1])$ are both larger than $\beta$. This implies that $g^{j+1}(x)\in L^{u,1}(\beta)$. Thus, we get a contradiction.
\end{proof}

Thus, we have the decomposition $\Theta=\Theta_L\cup \Theta_S$, where $\theta\in\Theta_L$ if and only if $g^{j+1}\circ\sigma\circ\gamma'\circ\theta$ is not $\varepsilon_g$-bounded. Define
$$\Theta_L^{\pm 1}=\{\gamma'\circ\theta\circ\gamma_\theta^{\pm 1}:~\theta\in\Theta_L\},$$
$$\Gamma_{j+1,\gamma'}'=\{\gamma'\circ \theta:~\theta\in\Theta_L^1\cup\Theta_L^{-1}\cup\Theta_S.\},$$
and 
$$\Gamma_{j+1}':=\bigcup_{\gamma'\in\Gamma_j'}\Gamma_{j+1,\gamma'}'.$$
Thus for any $\gamma\in\Gamma_{j+1}'$, $g^{j+1}\circ\sigma\circ\gamma$ is $\varepsilon_g$-bounded. From the Claim above, it is clear that
$$\Sigma(k_0,k_0',k_1,k_1',\cdots,k_{j-1},k_{j-1}',k_j,k_j')\subset\bigcup_{\gamma\in\Gamma_{j+1}'}\sigma\circ\gamma([-1,1]).$$
By the construction, we know that the cardinality of $\Gamma_{j+1}'$ is bounded by the quantity.
\end{proof}

\begin{Corollary}\label{Cor:cardinality-one-type}
For $n=mq$ large enough, for the family $\Gamma_m'$ as in Proposition~\ref{Pro:one-type-covering}, one has that
$$\#\Gamma_m'\le 2^m C_r^m\exp\{ n(\frac{1}{r}R(f)+\alpha/10)\}(\|Dg\|_{\rm sup}+2)^{\alpha_1 m+1},$$
\end{Corollary}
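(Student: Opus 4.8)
The plan is to specialize Proposition~\ref{Pro:one-type-covering} to $j=m$ and then bound the two data-dependent factors in the resulting estimate — namely $\exp\{\sum_{i=0}^{m-1}(k_i-k_i')/(r-1)\}$ and $(\|Dg\|_{\rm sup}+2)^{\#\{0<i\le m,\,i\in E\}}$ — using Lemma~\ref{Lem:sum-of-k} and Lemma~\ref{Lem:density} respectively. The link between the abstract tuple and those two lemmas is admissibility: since $(k_0,k_0',\dots,k_{m-1},k_{m-1}')$ is admissible, $\Sigma(k_0,k_0',\dots,k_{m-1},k_{m-1}')$ is nonempty, so I fix a representative point $y$ in it. Then $y\in K_E\subset K\cap\sigma_*$, and by the very definition of $\Sigma$ one has $k_i=k_i(y)$ and $k_i'=k_i'(y)$ for every $0\le i\le m-1$, which is precisely the hypothesis needed to apply both lemmas to $y$.

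First I would estimate the product of exponentials. Applying Lemma~\ref{Lem:sum-of-k} to $y$ (valid because $y\in K$ and $n=mq$ is large) gives
$$\sum_{i=0}^{m-1}\frac{k_i-k_i'}{r-1}=n\cdot\frac1n\,\frac{1}{r-1}\sum_{i=0}^{m-1}\bigl(k_i(y)-k_i'(y)\bigr)\le n\Bigl(\frac1r R(f)+\frac{\alpha}{10}\Bigr).$$
Next, for the polynomial factor, I note that $E\subset[0,m)$, so $\{0<i\le m:\,i\in E\}\subset E$; and since $y\in K_E$, the type $E$ equals $\{0\le i\le m-1:\,f^{qi}(y)\in L^{u,1}(\beta)\}$, whence Lemma~\ref{Lem:density} (again for $m$ large) yields $\#\{0<i\le m:\,i\in E\}\le\#E\le\alpha_1 m\le\alpha_1 m+1$. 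Substituting both bounds into Proposition~\ref{Pro:one-type-covering} with $j=m$ produces
$$\#\Gamma_m'\le 2^m C_r^m\exp\Bigl\{n\bigl(\tfrac1r R(f)+\tfrac{\alpha}{10}\bigr)\Bigr\}\,(\|Dg\|_{\rm sup}+2)^{\alpha_1 m+1},$$
which is exactly the claimed inequality.

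I do not expect a genuine obstacle here; it is essentially a substitution into the inductive bound already established in Proposition~\ref{Pro:one-type-covering}. The only points that require a little care are: identifying the abstract integers $k_i,k_i'$ indexing $\Sigma$ with the orbit quantities $k_i(y),k_i'(y)$ through a chosen representative $y$, so that Lemmas~\ref{Lem:sum-of-k} and~\ref{Lem:density} become applicable; the index bookkeeping $\{0<i\le m:\,i\in E\}\subset E$ that follows from $E\subset[0,m)$; and verifying that the ``$m$ (equivalently $n=mq$) large enough'' thresholds demanded by the two lemmas can be met simultaneously, which is automatic.
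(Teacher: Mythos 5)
Your proposal is correct and follows essentially the same route as the paper: substitute $j=m$ into Proposition~\ref{Pro:one-type-covering}, bound the exponential factor via Lemma~\ref{Lem:sum-of-k} and the exponent $\#\{0<i\le m:\,i\in E\}$ via the density constraint on $E$ (the paper invokes $d_m(E)<\alpha_1$ directly, which is the same content as your appeal to Lemma~\ref{Lem:density} through a representative point of the nonempty admissible set). Your explicit choice of a representative $y\in\Sigma(\cdots)$ to identify the abstract integers $k_i,k_i'$ with $k_i(y),k_i'(y)$ is a point the paper leaves implicit, and is handled correctly.
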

\begin{proof}
By Lemma~\ref{Lem:sum-of-k}, for $m$ large enough, one has 
$$\sum_{j=0}^{m-1}(k_j(y)-k_j'(y))\le n(\frac{1}{r}R(f)+\alpha/10)$$
By the choice of $E$, i.e., $d_m(E)<\alpha_1$, one has that 
$$\#\{0< i\le m,~i\in E\}<\alpha_1 m+1.$$
One can thus conclude by Proposition~\ref{Pro:one-type-covering}.
\end{proof}

\medskip

Now we prove Theorem~\ref{Thm:cover} for $n=mq$. We will prove that there is a family of reparametrizations $\widetilde\Gamma_{mq}$ such that the following holds:
\begin{enumerate}
\item $\lim_{n\to\infty}\frac{1}{n}\log\#\widetilde\Gamma_{mq}\le \frac{1}{r} R(f)+\alpha/2$;
\item $\bigcup_{\gamma\in\widetilde\Gamma_{mq}}\sigma\circ\gamma([-1,1])\supset K\cap \sigma_*$;
\item $g^j\circ\sigma\circ\gamma$ is strongly $\varepsilon_g$-bounded for any $\gamma\in\widetilde\Gamma_{mq}$ for any $j=0,1,\cdots,m-1$.
\end{enumerate}
Now we give the proof of the above statement.
\begin{proof}
For an admissible type $E\subset [0,m)$, for any admissible set
$$\Sigma(k_0,k_0',k_1,k_1',\cdots,k_{m-1},k_{m-1}')=\Sigma(E;k_0,k_0',k_1,k_1',\cdots,k_{m-1},k_{m-1}'),$$
one gets a family $\Gamma_j'=\Gamma_j'(\Sigma(k_0,k_0',k_1,k_1',\cdots,k_{m-1},k_{m-1}'))$ of reparametrizations by applying Proposition~\ref{Pro:one-type-covering}. Thus, to take the union of all these reparametrizations, we get a family of reparametrizations $\widetilde\Gamma_{mq}$. By the property of $\Gamma_j'$, one has that $g^j\circ\sigma\circ\gamma$ is strongly $\varepsilon_g$-bounded for any $\gamma\in\widetilde\Gamma_{mq}$ for any $j=0,1,\cdots,m-1$. Clearly, one has 
$$\bigcup_{\gamma\in\widetilde\Gamma_{mq}}\sigma\circ\gamma([-1,1])\supset K\cap \sigma_*.$$
It remains to estimate the cardinality of $\widetilde\Gamma_{mq}$.
\begin{align*}
&\#\widetilde\Gamma_{mq}\le \#{\mathcal S}_m\times \sup_{E\in {\mathcal S}_m}\#\{(k_0,k_0',k_1,k_1',\cdots,k_{m-1},k_{m-1}'):~\Sigma(E;k_0,k_0',k_1,k_1',\cdots,k_{m-1},k_{m-1}')\neq\emptyset\}\\
&\times \sup_{E\in {\mathcal S}_m;(k_0,k_0',k_1,k_1',\cdots,k_{m-1},k_{m-1}')}\# \Gamma_j'(\Sigma(k_0,k_0',k_1,k_1',\cdots,k_{m-1},k_{m-1}')).
\end{align*}
Thus, by Equation~\eqref{e.E-type}, Lemma~\ref{Lem:derivetive-type}, Proposition~\ref{Pro:one-type-covering} and Corollary~\ref{Cor:cardinality-one-type}, one has that
\begin{align*}
&\#\widetilde\Gamma_{mq}\le {\rm e}^{\alpha n/10}\times \big(q\cdot \max\{\log\|Df\|_{\rm sup},~\log\|Df^{-1}\|_{\rm sup}\}\big)^m\\
&~~~~~~~~~~~\times 2^m C_r^m\exp\{ n(\frac{1}{r}R(f)+\alpha/10)\}(\|Dg\|_{\rm sup}+2)^{\alpha_1 m+1}.
\end{align*}
By taking $\log$ and dividing $n=mq$, one has that
\begin{align*}
&\frac{1}{n}\log\#\widetilde\Gamma_{mq}\le \alpha/10+ \frac{1}{q}\log q+\frac{1}{q} \log\big(\max\{\log\|Df\|_{\rm sup},~\log\|Df^{-1}\|_{\rm sup}\}\big)\\
&~~~~~~~~~~~+\frac{1}{q}\log(2 C_r)+\frac{1}{r}R(f)+\alpha/10 +\frac{\alpha_1 m+1}{mq}\log(\|Dg\|_{\rm sup}+2)\\
&~~~~~~~~~~~\le \frac{1}{r} R(f)+\alpha/2
\end{align*}
by Equation~\eqref{e.q-delete-Cr}, Equation~\eqref{q-delete-logq}, Equation~\eqref{e.q-delete-norm}.
\end{proof}

\medskip

Now we choose the reparametrization $\Gamma_n$ in Theorem~\ref{Thm:cover} for any $n\in\NN$. Write $n=mq+\ell$ with $0\le \ell<q$. Define
$$\Gamma_n=\widetilde\Gamma_{mq}.$$
To prove Theorem~\ref{Thm:cover}, it remains to prove for any $n$.
$$\|D(f^j\circ\sigma\circ\gamma)\|\le 1,~~\forall j=0,1,\cdots,n-1.$$
We have known that for any $m$, for any $\gamma\in\Gamma_{mq}$
$$\|D(f^{qm}\circ\sigma\circ\gamma)\|\le \varepsilon_g,~~\forall j=0,1,\cdots,m-1.$$
Thus, for $0\le \ell<q$, by the choice of $\varepsilon_g$ (Equation~\ref{e.reduce-epsilon-g}), one has that
$$\|D(f^{qm+\ell}\circ\sigma\circ\gamma)\|\le\|Df\|_{\rm sup}^q\varepsilon_g\le 1.$$
%\section{Proof of Theorem~\ref{Thm:Cr-long-unstable}}
The proof of Theorem~\ref{Thm:cover} is thus complete. \qed

\appendix

\section{Measurability}\label{Sec:measurability}
Given $\chi_1>\chi_2$ and $k\in \NN$, we say a point $x\in M$  has $(k,\chi_1,\chi_2)$-\textit{dominated Lyapunov exponents} if there exists a unique splitting $T_xM=E(x)\bigoplus F(x)$ such that $\dim E(x)=k$ and
\begin{itemize}
\item for every $v_E\in E(x)\setminus \{0\}$ and $v_F\in F(x)\setminus \{0\}$
$$\lim_{n\rightarrow \pm\infty } \frac{1}{n}\log \|D_xf (v_E)\|>\chi_1>\chi_2>\lim_{n\rightarrow \pm\infty } \frac{1}{n}\log \|D_xf (v_F)\|.$$
\item $$ \lim_{n\rightarrow \pm\infty }\frac{1}{n} \log \sin\angle(E(f^n(x)),F(f^n(x)))=0.$$
\end{itemize}
Denote by ${\rm DOM}_{\chi_1,\chi_2}^k$ the set of all points with $(k,\chi_1,\chi_2)$-dominated Lyapunov exponents.
It follows that ${\rm DOM}_{\chi_1,\chi_2}^k$ is measurable and $f$-invariant.

An $f$-invariant measure $\mu$ is said to have $(k,\chi_1,\chi_2)$\textit{-dominated Lyapunov exponents} if $\mu ({\rm DOM}_{\chi_1,\chi_2}^k)=1$.
Choose $\varepsilon>0$ small enough such that 
$$0<\varepsilon<\min \left\{\frac{|\chi_1|}{10\dim M}, \frac{\chi_1-\chi_2}{20\dim M}, \frac{|\chi_2|}{10\dim M}\right\}.$$
By Pesin's nonuniformly hyperbolic theory \cite{BP07}, there exists an $f$-invariant measurable subset $\Lambda^{k,\varepsilon}_{\chi_1,\chi_2}\subset {\rm DOM}_{\chi_1,\chi_2}^k$, a measurable function $C:\Lambda^{k,\varepsilon}_{\chi_1,\chi_2}\rightarrow (0,+\infty)$ and a measurable splitting $T_{\Lambda^{k,\varepsilon}_{\chi_1,\chi_2}}=E\oplus F$ satisfying 
\begin{enumerate}
	\item [(1)] $\mu(\Lambda^{k,\varepsilon}_{\chi_1,\chi_2})=1$ for any $f$-invariant measure $\mu$ with $(k,\chi_1,\chi_2)$\textit{-dominated Lyapunov exponents};
	\item [(2)] $e^{-\varepsilon}<C(f(x))/C(x)<e^{\varepsilon}$ for any $x\in \Lambda^{k,\varepsilon}_{\chi_1,\chi_2}$;
	\item [(3)] $\dim E(x)=k$ and $\angle(E(x),F(x))>C(x)^{-1}$ for any $x\in \Lambda^{k,\varepsilon}_{\chi_1,\chi_2}$;
	\item [(4)] for every $x\in \Lambda^{k,\varepsilon}_{\chi_1,\chi_2}$, for all unit vectors $v_E\in E(x),v_F\in F(x)$ and $n\in \NN $
	\begin{align*}
	&\|D_xf^{-n}v_E\|\leq C(x)e^{-(\chi_1-\varepsilon)n}, \ \|D_xf^{n}v_F\|\leq C(x)e^{(\chi_2+\varepsilon)n};\\
	&\|D_xf^{n}v_E\|\geq C(x)^{-1}e^{(\chi_1-\varepsilon)n}, \ \|D_xf^{-n}v_F\|\geq C(x)^{-1}e^{-(\chi_2+\varepsilon)n}.
	\end{align*}   
\end{enumerate}
For $\ell >1$, we define
$$\Lambda^{k,\varepsilon}_{\chi_1,\chi_2,\ell}:=\{x\in \Lambda^{k,\varepsilon}_{\chi_1,\chi_2}:~C(x)\leq e^{\ell \varepsilon}\}.$$
Then, $\Lambda^{k,\varepsilon}_{\chi_1,\chi_2,\ell}$ is closed,  $\bigcup_{\ell>1}\Lambda^{k,\varepsilon}_{\chi_1,\chi_2,\ell}=\Lambda^{k,\varepsilon}_{\chi_1,\chi_2}$,
$f^{n}(\Lambda^{k,\varepsilon}_{\chi_1,\chi_2,\ell})\subset  \Lambda^{k,\varepsilon}_{\chi_1,\chi_2,\ell+n}$ for any $n\in\ZZ$, and $x\mapsto E(x), x\mapsto F(x)$ depend continuously on $\Lambda^{k,\varepsilon}_{\chi_1,\chi_2,\ell}$
for every $n\in \NN$ and $\ell >1$.

Assuming $\chi_1>0$,  by the local unstable manifold theory (see \cite[Section 7.1]{BP07}), for any $x\in \Lambda^{k,\varepsilon}_{\chi_1,\chi_2}$, the $k$-local unstable manifold $W^{u,k}_{\loc}(x)$ is a $C^{r}$ embedding sub-manifold, 
$fW^{u,k}_{\loc}(x)\supset W^u_{\loc}(fx)$
and $x\mapsto W^{u,k}_{\loc}(x)$ is continuous on $\Lambda^{k,\varepsilon}_{\chi_1,\chi_2,\ell}$ for any $\ell>1$.
The global $k$-unstable manifold $W^{u,k}(x)$ satisfies
$$W^{u,k}(x)=\bigcup_{n\ge 0} f^{n} W^{u,k}_{\loc}(f^{-n}x).$$

For $\chi_1>\chi_2$, fix some $\varepsilon(\chi_1,\chi_2)$ small enough, let
$\Lambda^{k}_{\chi_1,\chi_2}:=\Lambda^{k,\varepsilon(\chi_1,\chi_2)}_{\chi_1,\chi_2}$ and define  $\Lambda^{k}_{\chi_1,\chi_2,\ell }$ similarly.
Define
$$\Lambda^{k}:=\bigcup \{\Lambda^{k}_{\chi_1,\chi_2}:~\chi_1>\chi_2, \ \chi_1,\chi_2\in \QQ\},$$
and 
$$\Lambda^{k,u}:=\bigcup \{\Lambda^{k}_{\chi_1,\chi_2}:~\chi_1>\max \{\chi_2,0\}, \ \chi_1,\chi_2\in \QQ\}.$$
We know that $\Lambda^{k}$ and $\Lambda^{k,u}$ are measurable sets.
For any ergodic measure $\mu$ has one-dominated positive Lyapunov exponent we have $\mu(\Lambda^{1,u})=1$.

Recall the sets $L^{u,1}(\beta)$ defined in Section \ref{SEC:1}.
We now prove the measurability of $L^{u,1}(\beta)$.
\begin{Proposition}\label{Prop:A1}
	The set $L^{u,1}(\beta)$ is measurable for any $\beta>0$.  
\end{Proposition}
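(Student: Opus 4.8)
The plan is to express $L^{u,1}(\beta)$ as a countable union of sets built from the Pesin blocks $\Lambda^1_{\chi_1,\chi_2,\ell}$ introduced above, on each of which the relevant geometric data vary continuously. First I would restrict to the measurable, $f$-invariant set $\Lambda^{1,u}=\bigcup\{\Lambda^1_{\chi_1,\chi_2}:\chi_1>\max\{\chi_2,0\},\ \chi_1,\chi_2\in\QQ\}$, outside of which a point either has no positive Lyapunov exponent or no one-dimensional dominated direction, so $L^{u,1}(\beta)\subset\Lambda^{1,u}$ up to the relevant notion (in fact $L^{u,1}(\beta)\cap(\Lambda^{1,u})^c=\emptyset$ since the Pesin unstable manifold and the bundle $E^1$ only exist on this set). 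Hence it suffices to show that $L^{u,1}(\beta)\cap\Lambda^1_{\chi_1,\chi_2,\ell}$ is measurable — indeed I will aim for it being a Borel set, in fact an $F_\sigma$ — for each rational pair $\chi_1>\max\{\chi_2,0\}$ and each integer $\ell>1$, and then take the countable union over all such triples.

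On a fixed block $\Lambda:=\Lambda^1_{\chi_1,\chi_2,\ell}$ the following data depend continuously on $x$: the splitting $x\mapsto E^1(x)\oplus F(x)$, the local unstable manifold $x\mapsto W^{u,1}_{\loc}(x)$ (as a $C^r$ embedded disc, in the $C^1$ topology), and the exponential chart $x\mapsto\exp_x$. The global unstable manifold is $W^{u,1}(x)=\bigcup_{n\ge 0}f^n W^{u,1}_{\loc}(f^{-n}x)$, and a point $x$ lies in $L^{u,1}(\beta)$ precisely when some piece $W_x\subset W^{u,1}(x)$ has the property that $\exp_x^{-1}W_x$ is a $C^1$ graph over $E^1(x)(\beta)$ of a map $\varphi$ with $\lip(\varphi)\le 1/3$. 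I would write the condition ``$x$ has a $\beta$-large piece'' as: there exists $n\ge 0$ such that $f^n(W^{u,1}_{\loc}(f^{-n}(x)))$ contains such a piece. For $x\in\Lambda$ and a fixed $n$, the point $f^{-n}(x)$ lies in the closed set $\Lambda^1_{\chi_1,\chi_2,\ell+n}$, on which $W^{u,1}_{\loc}$ varies continuously; pushing forward by $f^n$ (a $C^r$ diffeomorphism) the disc $D_n(x):=f^n W^{u,1}_{\loc}(f^{-n}x)$ depends continuously on $x\in\Lambda$ in the $C^1$ topology. The set of $x\in\Lambda$ for which $\exp_x^{-1}(D_n(x))$ contains the graph of a $(1/3)$-Lipschitz $C^1$ map over $E^1(x)(\beta)$ is then a closed subset of $\Lambda$: it is cut out by the condition that the tangent directions of $D_n(x)$ stay within the $1/3$-Lipschitz cone around $E^1(x)$ over the relevant domain, together with the condition that the projection to $E^1(x)$ of the connected component of $\exp_x^{-1}(D_n(x))$ through $0$ covers the closed ball $E^1(x)(\beta')$ for $\beta'<\beta$; using $\beta'$ strictly below $\beta$ and then taking a union over $\beta'\uparrow\beta$ rational handles the open/closed mismatch in the definition of $\mathrm{Domain}(\varphi)\supset E^1(x)(\beta)$. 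Thus $L^{u,1}(\beta)\cap\Lambda=\bigcup_{n\ge 0}\bigcup_{\beta'\in\QQ\cap(0,\beta)} C_{n,\beta'}$ with each $C_{n,\beta'}$ closed in $\Lambda$, hence Borel, and the full set $L^{u,1}(\beta)=\bigcup_{\chi_1,\chi_2,\ell}\big(L^{u,1}(\beta)\cap\Lambda^1_{\chi_1,\chi_2,\ell}\big)$ is a countable union of Borel sets, hence measurable.

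The main obstacle is the rigorous verification that the push-forward discs $x\mapsto D_n(x)=f^n W^{u,1}_{\loc}(f^{-n}x)$ depend continuously, in the $C^1$ sense and uniformly on the compact block, so that the ``graph over $E^1(x)(\beta')$ with Lipschitz constant $\le 1/3$'' condition is genuinely closed; this rests on the continuity statements for local unstable manifolds on Pesin blocks from \cite[Section 7.1]{BP07} together with the continuity of $x\mapsto E^1(x)$ and of the charts, and on the care needed because $W^{u,1}(x)$ is only an immersed (not embedded) submanifold, so one must phrase everything in terms of the embedded local discs $W^{u,1}_{\loc}(f^{-n}x)$ and their iterates rather than the global manifold. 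A secondary technical point is checking that the ``component of $\exp_x^{-1}(D_n(x))$ through the origin projecting onto $E^1(x)(\beta')$'' is an upper-semicontinuous-type condition; this follows from the uniform $C^1$ control once one notes that on the set where the Lipschitz-cone condition already holds the relevant component is automatically a graph, so the domain-size requirement becomes a closed condition on the endpoints of the curve. Everything else — the reduction to blocks, the countable-union bookkeeping, and the rational approximation of $\beta$ — is routine.
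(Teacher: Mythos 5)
Your strategy is the same as the paper's: reduce to the Pesin blocks $\Lambda^1_{\chi_1,\chi_2,\ell}$ with $\chi_1,\chi_2$ rational and $\ell$ an integer, exhaust $W^{u,1}(x)$ by the increasing sequence $f^nW^{u,1}_{\loc}(f^{-n}x)$, and show that for each fixed $n$ the set of $x$ in a block admitting a $1/3$-Lipschitz graph of the required size inside $f^nW^{u,1}_{\loc}(f^{-n}x)$ is closed, using the continuity of $x\mapsto E^1(x)$ and of $x\mapsto W^{u,1}_{\loc}(x)$ on blocks. That part is sound and matches the paper's proof.

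There is, however, one concrete error in your decomposition: the quantifier over $\beta'$ points the wrong way. Since $E^1(x)(\beta)$ is the \emph{open} ball of radius $\beta$ (the paper's convention $A(\delta)=\{v:\|v\|<\delta\}$), the condition $\mathrm{Domain}(\varphi)\supset E^1(x)(\beta)$ is equivalent to ``for \emph{every} rational $\beta'<\beta$ the domain contains the closed ball of radius $\beta'$,'' so the correct identity is $L^{u,1}(\beta)\cap\Lambda=\bigcup_{n}\bigcap_{\beta'\in\QQ\cap(0,\beta)}C_{n,\beta'}$, not $\bigcup_n\bigcup_{\beta'}C_{n,\beta'}$. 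As written, your union contains every point whose unstable graph extends only over a ball of radius, say, $\beta/2$, so it is strictly larger than $L^{u,1}(\beta)\cap\Lambda$ in general. The repair is immediate — a countable intersection of closed sets is still closed, so each $\bigcap_{\beta'}C_{n,\beta'}$ is closed and the union over $n$ and over blocks remains Borel. (The paper avoids the $\beta'$ device altogether: it shows directly that the fixed-$n$ set with the open-ball requirement is closed, by extracting a limit of the uniformly $1/3$-Lipschitz graphs $\varphi_m$ on the open ball from the $C^1$ convergence of the discs $f^nW^{u,1}_{\loc}(f^{-n}x_m)$; either route works once the quantifier is fixed.)
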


\begin{proof}
    It suffices to show that for any $\chi_1,\chi_2\in \QQ$ and $\chi_1>\max\{\chi_2,0\}$, the set 
    $L^{u,1}(\beta)\cap \Lambda^{1}_{\chi_1,\chi_2}$ is measurable.
	
	For any $x\in \Lambda^{1}_{\chi_1,\chi_2}$, we have
	$$ W^{u,1}(x):=\bigcup_{n\geq 1} f^n W^{u,1}_{\rm loc}(f^{-n} x), $$
	and the sequence $\left\{f^n W^{u,1}_{\rm loc}(f^{-n} x)\right\}_{n\in \NN}$ is increasing in $n$. 
	For each $n>0$, define
	\begin{align*}
		~L^{u,1}_n(\beta)=&\{x\in \Lambda^{1}_{\chi_1,\chi_2}:~\exists W_x\subset f^n W^{u,1}_{\rm loc}(f^{-n} x),~\textrm{s.t.}~\exp_x^{-1}W_x~\textrm{is a $C^1$ graph of a map~} \\ &~\varphi:~E(x)\to E(x)^\perp,
		~{\rm Lip}(\varphi)\le 1/3,~{\rm Domain}(\varphi)\supset E(x)(\beta)\}.
	\end{align*}
	Then, we have 
\begin{equation}\label{e.limit-measurable} 
L^{u,1}(\beta)\cap \Lambda^{1}_{\chi_1,\chi_2}=\bigcup_{n\geq 1,\ell \geq 1} L^{u,1}_n(\beta)\cap \Lambda^{1}_{\chi_1,\chi_2,\ell}.
\end{equation}

	\begin{Claim}
		For any $n \geq 1$ and $\ell \geq 1$, the set $L^{u,1}_n(\beta)\cap \Lambda^{1}_{\chi_1,\chi_2,\ell}$ is closed.
	\end{Claim}
\begin{proof}[Proof of the Claim]	
	Let $(x_m)_{m \geq 0}$ be a sequence of points in $L^{u,1}_n(\beta)\cap \Lambda^{1}_{\chi_1,\chi_2,\ell}$ with $x_m \rightarrow x$.     
	Since $\Lambda^{1}_{\chi_1,\chi_2,\ell}$ is closed, $x\in \Lambda^{1}_{\chi_1,\chi_2,\ell}$. 
	Note that $f^{-n}(x_m)\in \Lambda^{1}_{\chi_1,\chi_2,\ell+n}$ and $f^{-n}(x_m)\rightarrow f^{-n}(x)$.
	Thus, $W^{u,1}_{\rm loc}(f^{-n}(x_m))$ converges to $W^{u,1}_{\rm loc}(f^{-n}(x))$ as $m \rightarrow +\infty$ in the $C^1$ topology.
	Therefore, we also have $f^n W^{u,1}_{\rm loc}(f^{-n}(x_m))$  converges to $f^n W^{u,1}_{\rm loc}(f^{-n}(x))$ as $m \rightarrow +\infty$ in the $C^1$ topology.
		
	Note that $E(x_m)$ also converges to $E(x)$
	and $\{x_m\}_{m\in \NN} \subset L^u_n(\beta)$, it follows that there exists a map $\varphi:~E(x)\to E(x)^\perp$ with ${\rm Domain}(\phi)\supset E(x)(\beta)$ and ${\rm Lip}(\varphi)\leq 1/3$, such that $\exp_x {\rm graph}(\varphi) \subset f^n W^{u,1}_{\rm loc}(f^{-n}(x))$. Thus, $x\in L^{u,1}_n(\beta)$ and so $L^{u,1}_n(\beta)\cap \Lambda^{1}_{\chi_1,\chi_2,\ell}$ is closed.
\end{proof}
The proof of the proposition follows immediately from the claim and Equation~\eqref{e.limit-measurable}.
\end{proof}
 
 \subsection*{Data availibility statement}
 No data availability statement is required, as no experimental data is involved.
 
  \subsection*{Ethics declarations}
\noindent \textbf{Conflict of interest} \
  On behalf of all authors, the corresponding author states that there is no conflict of interest.

\vskip 5pt

\flushleft{\bf Chiyi Luo} \\
\small School of Mathematics and Statistics, Jiangxi Normal University, Nanchang,   330022, P. R. China\\
\textit{E-mail:} \texttt{luochiyi98@gmail.com}\\

\flushleft{\bf Dawei Yang} \\
\small School of Mathematical Sciences,  Soochow University, Suzhou, 215006, P.R. China\\
\textit{E-mail:} \texttt{yangdw@suda.edu.cn}\\

\end{document}